\def\NAT@def@citea{\def\@citea{\NAT@separator}}
\theoremstyle{plain}
\newtheorem{theorem}{Theorem}[section]
\newtheorem{lemma}[theorem]{Lemma}
\newtheorem{corollary}[theorem]{Corollary}
\newtheorem{proposition}[theorem]{Proposition}
\theoremstyle{definition}
\newtheorem{definition}[theorem]{Definition}
\theoremstyle{remark}
\newtheorem{remark}{Remark}
\theoremstyle{plain}
{\bf}{\it}
\theoremstyle{plain}
 {\bf}{\it}
\theoremstyle{plain}
 {\bf}{\it}
\theoremstyle{plain}
 {\bf}{\it}
\theoremstyle{plain}
 {\bf}{\it}
\theoremstyle{plain}
 {\bf}{\it}
\theoremstyle{plain}
 {\bf}{\it}
\theoremstyle{plain}
 {\bf}{\it}
\theoremstyle{plain}
 {\bf}{\it}
\theoremstyle{plain}
 {\bf}{\it}
\theoremstyle{plain}
 {\bf}{\it}
\theoremstyle{plain}
 {\bf}{\it}
\theoremstyle{plain}
 {\bf}{\it}
\theoremstyle{plain}
 {\bf}{\it}
\theoremstyle{plain}
 {\bf}{\it}
\theoremstyle{plain}
 {\bf}{\it}
\theoremstyle{plain}
 {\bf}{\it}
\newcommand{\al}{\alpha}
\newcommand{\be}{\beta}
\newcommand{\ga}{\gamma}
\newcommand{\de}{\delta}
\newcommand{\eps}{\varepsilon}
\newcommand{\bx}{\bar x}
\newcommand{\by}{\bar y}
\newcommand{\iv}{^{-1} }
\newcommand {\R} {\mathbb R}
\newcommand {\N} {\mathbb N}
\newcommand {\B} {\mathbb B}
\newcommand {\gph} {{\textrm{gph}}\,}
\newcommand {\dom} {{\textrm{dom}}\,}
\newcommand {\epi} {{\textrm{epi}}\,}
\newcommand {\bd} {{\textrm{bd}}\,}
\newcommand {\Er} {{\textup{Er}}\,}
\newcommand {\sd} {\partial}
\newcommand{\folgt}{$ \Rightarrow\ $}
\def\nbh{neighbourhood}
\def\es{\emptyset}
\def\lsc{lower semicontinuous}
\def\SVM{set-valued mapping}
\def\EVP{Ekeland variational principle}
\def\Fr{Fr\'echet}
\newcommand{\red}[1]{\textcolor{red}{#1}}
\newcommand{\qdtx}[1]{\quad\mbox{#1}\quad}
\newcommand{\AND}{\quad\mbox{and}\quad}
\newcounter{mycount}
\def\cnta{\setcounter{mycount}{\value{enumi}}}
\def\cntb{\setcounter{enumi}{\value{mycount}}}
\newcommand{\AK}[1]{\todo[inline]{AK {#1}}}
\begin{document}
\title{Error bounds revisited}

\author{
\name{Nguyen Duy Cuong\textsuperscript{a,b} and Alexander Y. Kruger\textsuperscript{a}}
\thanks{CONTACT Alexander Y. Kruger. Email: a.kruger@federation.edu.au}
\affil{\textsuperscript{a} Centre for Informatics and Applied Optimization, School of Engineering, IT and Physical Sciences, Federation University, POB 663, Ballarat, Vic,
3350, Australia; \textsuperscript{b} Department of Mathematics, College of Natural Sciences, Can Tho University, Vietnam}
\vspace{5mm}
{Dedicated to the memory of Prof Alexander Rubinov, a teacher and friend}
}
\maketitle

\begin{abstract}
We propose a unifying general framework of quantitative primal and dual sufficient and necessary error bound conditions covering linear and nonlinear, local and global settings.
The function is not assumed to possess any particular structure apart from the standard assumptions of lower semicontinuity in the case of sufficient conditions and (in some cases) convexity in the case of necessary conditions.
We expose the roles of the assumptions involved in the error bound assertions, in particular, on the underlying space: general metric, normed, Banach or Asplund.
Employing special collections of
slope operators,
we introduce a succinct form of sufficient error bound conditions, which allows one to combine in a single statement several different assertions: nonlocal and local primal space conditions in complete metric {spaces}, and subdifferential conditions in Banach and Asplund spaces.
\end{abstract}

\begin{keywords}
error bound; slope; subdifferential;  subregularity; graph subregularity; calmness; semi-infinite programming	
\end{keywords}

\begin{amscode}
49J52; 49J53; 49K40; 90C30; 90C46
\end{amscode}


\section{Introduction}

Necessary and especially sufficient conditions for error bounds of (extended) real-valued functions have been a subject of intense study for more than half a century due to their numerous applications in optimization and variational analysis,
particularly in convergence analysis of iterative algorithms, penalty functions, optimality conditions, weak sharp minima, stability and well-posedness of solutions, (sub)regularity and calmness of \SVM s, and subdifferential calculus; see, e.g., \cite{Pan97,YeYe97,AzeCor04,Kru15,Kru16,YaoZhe16,AzeCor17,Iof17, KruLopYanZhu19}.

A huge number of sufficient and necessary conditions for error bounds have been obtained in the linear
\cite{Jou00,NgZhe01,WuYe01,AzeCorLuc02,WuYe02,WuYe03, AzeCor04,
NgaThe04,FabHenKruOut10, FabHenKruOut12,MenYan12,Kru15,ChuJey16.1,ChuJey16.2, LiMenYan18, KruLopYanZhu19},
as well as more subtle nonlinear (mostly H\"older)
\cite{NgZhe00, WuYe02,
CorMot08,NgaThe08,NgaThe09,
AzeCor14,ChaChe14,Kru15.2,Kru16, YaoZhe16,AzeCor17,
LiMorNghPha18, KruLopYanZhu19,ZhaZhe19}
settings.

Given an extended-real-valued function $f: X \rightarrow\R_\infty:=\mathbb{R}\cup \{+\infty\}$ on a metric space $X$, and $\mu\in]0,+\infty]$, we
employ the relatively standard notations
$$[f\le0]:=\{x\in X\mid f(x)\le0\},\quad
[0<f<\mu]:=\{x\in X\mid 0<f(x)<\mu\}.$$
The sets $[f>0]$, $[f<\mu]$ and $[f\le\mu]$ are defined in a similar way.

\begin{definition}\label{D0}
Suppose $X$ is a metric space, $f:X \rightarrow \mathbb{R}_\infty$, and $\tau>0$.
The function $f$ admits a $\tau-$error bound at $\bx\in X$ if there exist $\delta\in]0,+\infty]$ and $\mu\in]0,+\infty]$ such that
\begin{align}\label{D0-1}
\tau d(x,[f \le 0])\le f(x)
\end{align}
for all $x\in B_\de(\bx)\cap[0<f<\mu]$, and either $\bx\in[f\le0]$ or $\de=+\infty$.
\end{definition}

The value of $\tau$ in Definition~\ref{D0} obviously depends on the values of $\delta$ and $\mu$.
We will often say that $f$ admits a $\tau-$error bound at $\bx$ with $\delta$ and $\mu$.

Definition~\ref{D0} combines the cases of local and global error bounds that are often treated separately.
The conventional local error bound property corresponds to the case $\bx\in[f\le0]$, and $\de$
being a (sufficiently small) finite number.
In this case, we say that $f$ admits a local $\tau-$error bound (with $\delta$ and $\mu$).
When $\delta=+\infty$, we have $B_\de(\bx)=X$, i.e. the error bound property in Definition~\ref{D0} is not related to any particular point, and we are in the setting of global error bounds.
In this case, we simply say that $f$ admits a global $\tau-$error bound.

The (local) error bound modulus of $f$ at $\bx\in[f\le 0]$ is defined as the exact upper bound of all $\tau>0$ such that $f$ admits a $\tau-$error bound at $\bx$ (with some $\delta\in]0,+\infty[$ and $\mu\in]0,+\infty]$), i.e.
\begin{align}\label{D0-2}
\Er f(\bx):=\liminf_{x\to\bx,\, f(x)>0}\frac{f(x)}{d(x,[f\le0])} =\liminf_{x\to\bx,\,f(x)\downarrow0}\frac{f(x)}{d(x,[f \le 0])}.
\end{align}

In the above definition and throughout the paper, we use the conventions $d(x,\es_X)=+\infty$, $\inf\es_\R=+\infty$ and $\frac{+\infty}{+\infty}=+\infty$.
(The last convention is only needed to accommodate for the trivial case $f\equiv+\infty$.)
The second equality in \eqref{D0-2} is straightforward.

By definition \eqref{D0-2}, $\Er f(\bx)\ge0$.
If $\Er f(\bx)=0$, then $f$ does not admit a $\tau-$error bound at $\bx$ for any $\tau>0$.

The next definition introduces error bounds in the nonlinear setting.
The nonlinearity is determined by a function $\varphi:\mathbb{R}_+ \rightarrow\mathbb{R}_+$ satisfying $\varphi(0)=0$ and $\varphi(t)>0$ if $t>0$.
The family of all such functions is denoted by $\mathcal{C}$.
We denote by $\mathcal{C}^1$ the subfamily of functions from $\mathcal{C}$ which satisfy
$\lim_{t\to+\infty}\varphi(t)=+\infty$, and
\if{
\todo{Should be waived.}
\NDC{31.8.20
In general, I think this condition can be dropped. But, looking at the proof of Theorem~\ref{T2.5}, this condition ensures the existence of
$\varphi\iv$.}
}\fi
are continuously differentiable on $]0,+\infty[$
with $\varphi'(t)>0$ for all $t>0$.
Obviously, if $\varphi\in\mathcal{C}^1$, then $\varphi\iv\in\mathcal{C}^1$.
Observe that,
for any $\alpha > 0$ and $q > 0$,
the function $\R_+\ni t\mapsto\alpha t^q$ belongs to $\mathcal{C}^1$.

\begin{definition}\label{D1}
Suppose $X$ is a metric space, $f:X \rightarrow \mathbb{R}_\infty$, and $\varphi\in\mathcal{C}$.
The function $f$ admits a $\varphi-$error bound at $\bx\in X$ if there exist $\delta\in]0,+\infty]$ and $\mu\in]0,+\infty]$ such that
\begin{align}\label{D1-1}
d(x,[f\le0])\le\varphi(f(x))
\end{align}
for all $x\in B_\de(\bx)\cap[0<f<\mu]$, and either $\bx\in[f\le0]$ or $\de=+\infty$.
\end{definition}

Under the conditions of Definition~\ref{D1}, we will often say that $f$ admits a $\varphi-$error bound at $\bx$ with $\delta$ and $\mu$.
When $\de<+\infty$, we say that $f$ admits a local $\varphi-$error bound at $\bx$ (with $\delta$ and $\mu$).
When $\de=+\infty$ (hence, $B_\de(\bx)=X$), we say that $f$ admits a global $\varphi-$error bound.

\begin{remark}\label{R1.1}
\begin{enumerate}
\item
The $\tau-$error bound property in Definition~\ref{D0} is a particular case of the $\varphi-$error bound property, corresponding to $\varphi$ being the linear function $t\mapsto\tau\iv t$.
\item
Any function $\varphi\in\mathcal{C}$ can be extended to the whole $\R$ by setting $\varphi(t)=0$ for all $t<0$.
Then one obviously has $[f\le 0]=[\varphi\circ f\le 0]$, and the seemingly more general $\varphi-$error bound property in Definition~\ref{D1} becomes the conventional $1-$error bound property for the composition function $\varphi\circ f$.
\item
The requirement $\lim_{t\to+\infty}\varphi(t)=+\infty$ in the definition of the family $\mathcal{C}^1$ is technical.
It is only needed to ensure that $\varphi\iv$ is defined on the whole $\R_+$.
Both conditions can be weakened.

In all assertions in the current paper involving \Fr\ subdifferentials it is sufficient to assume functions from $\mathcal{C}^1$ to be (not necessarily continuously) differentiable.
Continuous differentiability is only needed for assertions involving Clarke subdifferentials.
\item
It is not uncommon to consider nonlinear error bounds with inequality \eqref{D1-1} in Definition~\ref{D1} replaced with the following one:
\begin{align}\label{D1-2}
\psi(d(x,[f\le0]))\le f(x),
\end{align}
where $\psi\in\mathcal{C}$.
The models \eqref{D1-1} and \eqref{D1-2} are obviously equivalent with $\psi=\varphi\iv$ as long as one of the functions $\psi$ or $\varphi$ (hence, also the other one) is strictly increasing, which is the case, in particular, when $\varphi,\psi\in\mathcal{C}^1$.
\end{enumerate}
\end{remark}

Similar to the linear case, it makes sense to look for a smaller function $\varphi\in\mathcal{C}$ satisfying inequality \eqref{D1-1} (for the appropriate set of points $x\in X$).
It is not easy
to order functions on the whole set $\mathcal{C}$.
It is more practical to consider a subset of positive multiples of a given function $\varphi\in\mathcal{C}$.
Extending definition \eqref{D0-2}, we define the (local) $\varphi -$error bound modulus of $f$ at $\bx$ as
\begin{align}\label{D0-3}
\Er_\varphi f(\bx):=\liminf_{x\to\bx,\, f(x)>0}\frac{\varphi(f(x))}{d(x,[f\le0])}. 
\end{align}
It is easy to see that this is the reciprocal of the 
infimum
of all $\al>0$ such that $f$ admits an $(\al\varphi)-$error bound (with some $\de$ and $\mu$) at $\bx$.
When $\varphi(t)=t^q$ for some $q>0$ and all $t>0$, definition \eqref{D0-3} coincides with that of the modulus of $q-$order error bounds \cite{KruLopYanZhu19}.
In particular, with $q=1$ it reduces to \eqref{D0-2}.

In this paper, we do not aim to add some new sufficient or necessary conditions for error bounds to the large volume of existing ones (although some conditions in the subsequent sections are indeed new even in the linear setting).
Our main aim is to propose a comprehensive unifying general
(i.e. not assuming the function $f$ to have any particular structure) view on the theory of error bounds (linear and nonlinear, local and global), and clarify the relationships between the existing conditions including their hierarchy.
We expose the roles of the assumptions involved in the error bound assertions, in particular, on the underlying space: general metric, normed, Banach or Asplund.
Employing special collections of
slope operators defined below,
we introduce a succinct form of sufficient error bound conditions, which allows one to combine in a single statement several different assertions: nonlocal and local primal space conditions in complete metric space, and subdifferential conditions in Banach and Asplund spaces.

The hot topics of
error bounds for special families of functions and error bounds under uncertainty (see, e.g., \cite{ChuJey16.1,ChuJey16.2,LiMorNghPha18,LiMenYan18,DutMar}) are outside the scope of the current paper.

The core of the paper consists of three theorems treating linear (Theorem~\ref{T2.2}), nonlinear (Theorem~\ref{T2.4}) and `alternative' nonlinear (Theorem~\ref{T2.5}) error bound conditions that seem to cover all existing general
error bound results.
We show that the `nonlinear' Theorem~\ref{T2.4} is a straightforward consequence of the `linear' Theorem~\ref{T2.2},
while the `alternative nonlinear' Theorem~\ref{T2.5} is a straightforward consequence of the conventional Theorem~\ref{T2.4}.
In its turn, the original Theorem~\ref{T2.2} is a consequence of a preliminary statement -- Proposition~\ref{T2.1} -- treating the case when $x$ in \eqref{D0-1} is fixed, while encapsulating
all the main arguments used in the general statement.
Following Ioffe \cite[Basic lemma]{Iof00},
separate preliminary `fixed $x$' type statements
have been formulated by many authors; cf.
\cite{WuYe03, NgaThe04,NgaThe08,NgaThe09,Iof17,KruLopYanZhu19}.
Proposition~\ref{T2.1} seems to be the most comprehensive one.

\if{
\AK{29/08/20.
Sufficient conditions for global error bounds usually assume that $[f\le0]\ne\es$.
To be checked.}
\NDC{5.9.20
Papers of Ngai et al., Aze et al. mentioned this condition in the conclusion.
\cite{WuYe01} and \cite[Corollary 2.2]{HuaNg04} mentioned this condition in the assumption.
By the way, in \cite[Theorem 2.2]{WuYe03} they did not require $\bx\in[f\le 0]$ like what Aze et al. did.
}
}\fi

All the statements have the same structure, each combining several assertions that are mostly well known and are often formulated (and proved) as separate theorems:
\begin{enumerate}
\item
sufficient error bound conditions for a \lsc\ function on a complete metric space:
\begin{enumerate}
\item
nonlocal primal space conditions;
\item
infinitesimal primal space conditions in terms of slopes;
\item
in the setting of a Banach space, dual space conditions in terms of Clarke subdifferentials;
\item
in the setting of an Asplund space, dual space conditions in terms of \Fr\ subdifferentials;
\end{enumerate}
\item
nonlocal primal space necessary error bound conditions for a (not necessarily \lsc) function on a metric space (except  Theorem~\ref{T2.5});
\item
dual space necessary error bound conditions for a convex function on a normed space in terms of conventional convex subdifferentials.
\end{enumerate}

This paper seems to be the first attempt to
combine the above assertions in a single statement.
We believe that it not only makes the presentation shorter, but also clarifies the overall picture: it exposes the relationships between the assertions and the hierarchy of the sufficient conditions in part (i).
Most of the assertions in Theorem~\ref{T2.2} and to some extent also in Theorems~\ref{T2.4} and \ref{T2.5} have been proved multiple times by many authors, often with long
multi-page `from scratch' proofs, and `new' proofs keep coming.
We think that it is time to make a pause and summarize the main ideas behind the assertions.

In the case of the key `fixed $x$' type Proposition~\ref{T2.1} characterizing linear error bounds, the implication (b) \folgt (a) in part (i) of the above list as well as the necessary conditions in parts (ii) and (iii) follow immediately from the definitions.
They are
included for the completeness of the picture.
The main assertions are the sufficiency of condition (a), and implications (c) \folgt (b) and (d) \folgt (b).
They employ the following fundamental tools of variational analysis:
\begin{itemize}
\item
\emph{\EVP} (sufficiency of condition (a));
\item
\emph{sum rules} for respective subdifferentials (implications (c) \folgt (b) and (d) \folgt (b)).
\end{itemize}

The seemingly counter-intuitive fact that sufficient conditions for nonlinear error bounds can be deduced from those for the corresponding linear ones
was demonstrated by Corvellec et al. using, first, the `\emph{change-of-metric principle}' \cite{CorMot08, AzeCor14}, and then the `\emph{change-of-function}' approach \cite{AzeCor17} (see also \cite{NgaThe08,KruLopYanZhu19}).
Our presentation here largely follows the latter one.
We emphasise that throughout the paper the word `nonlinear' is used
in the conventional sense: `not necessarily linear'.

In the general nonlinear
setting, conventional sufficient local error bound conditions, besides slopes and subdifferentials, naturally involve variable coefficients $\varphi'(f(u))$ computed at appropriate points $u\in[f>0]$.
Several publications have appeared recently proving alternative nonlinear sufficient conditions with coefficients, which involve $\varphi'$ depending not on values of the function $f$ but on the distance $d(u,[f\le0])$; cf. \cite{CorMot08,ChaChe14,YaoZhe16,ZhaZhe19,KruLopYanZhu19}.
Such results usually assume certain monotonicity of $\varphi'$.
We show in Theorem~\ref{T2.5} that the alternative sufficient conditions are consequences of the conventional ones.
Observe that when $\varphi$ is linear (the conventional linear case), the coefficients are constant, and there is no difference between `conventional' and `alternative' conditions.
\sloppy

The structure of the paper is as follows.
In the next Section~\ref{S2}, we provide basic definitions and facts used throughout the paper.
In particular, we define \emph{subdifferential slopes} and introduce special collections of
slope operators.
Conventional linear error bound conditions are discussed in Section~\ref{S3}.
It contains a preliminary statement -- Proposition~\ref{T2.1} -- treating the case when $x$ in \eqref{D0-1} is fixed, and the general Theorem~\ref{T2.2}, the latter being an easy consequence of the first.
Both statements contain a condition, which has not been used in this type of statements earlier.
Conventional and alternative nonlinear error bound conditions are discussed in Sections~\ref{S4} and \ref{S5}, respectively.
We demonstrate that the conventional nonlinear conditions
are straightforward consequences of the corresponding linear ones,
while the alternative conditions are consequences of the conventional ones.
In Sections~\ref{S3.1} and \ref{S6}, we illustrate the sufficient and necessary conditions for nonlinear error bounds by applying them to characterizing nonlinear \emph{subregularity} of general \SVM s and \emph{calmness} of solution and level set mappings of canonically perturbed convex semi-infinite optimization
problems, respectively.

\section{Preliminaries} \label{S2}

Our basic notation is standard, see, e.g., \cite{RocWet98,Mor06.1,DonRoc14}.
Throughout the paper, $X$ and $Y$ are usually either metric or normed spaces.
We use the same notations $d(\cdot,\cdot)$ and $\|\cdot\|$ for distances and norms in all spaces.
Normed spaces are often treated as metric spaces with the distance determined by the norm in the usual way.
If not explicitly stated otherwise, products of metric or normed spaces are assumed equipped with the maximum distances or norms, e.g., $\|(x,y)\|:=\max\{\|x\|,\|y\|\}$, {$(x,y)\in X\times Y$.}
If $X$ is a normed space, its topological dual is denoted by $X^*$, while $\langle\cdot,\cdot\rangle$ denotes the bilinear form defining the pairing between the two spaces.
The open unit balls in the primal and dual spaces are denoted by $\B$ and $\B^*$, respectively, and $B_\delta(x)$ stands for the open ball with center $x$ and radius $\delta>0$.
Symbols $\R$, $\R_+$ and $\N$ denote the sets of all real numbers, all nonnegative real numbers and all positive integers, respectively.
Given an $\al\in\R$, we denote $\al_+:=\max\{0,\al\}$.

For a set $\Omega\subset X$, its boundary is denoted by $\bd\Omega$, while $i_\Omega$ denotes its
\emph{indicator function}, i.e. $i_\Omega(x)=0$ if $x\in \Omega$ and $i_\Omega(x)=+\infty$ if $x\notin \Omega$.
The distance from a point $x$ to $\Omega$ is
$d(x,\Omega):=\inf_{u \in \Omega}d(u,x)$, and we use the convention ${d(x,\emptyset) = +\infty}$.
For an extended-real-valued function $f:X\to\R_\infty$,
its domain and epigraph are defined,
respectively, by
$\dom f:=\{x \in X\mid {f(x) < +\infty}\}$
and
$\epi f:=\{(x,\alpha) \in X \times \mathbb{R}\mid {f(x) \le \alpha}\}$.
The inverse of $f$ (if it exists) is denoted by $f\iv$.

A set-valued mapping $F:X\rightrightarrows Y$ between two sets $X$ and $Y$ is a mapping, which assigns to every $x\in X$ a subset (possibly empty) $F(x)$ of $Y$.
We use the notations $\gph F:=\{(x,y)\in X\times Y\mid
y\in F(x)\}$ and $\dom\: F:=\{x\in X\mid F(x)\ne\emptyset\}$
for the graph and the domain of $F$, respectively, and $F^{-1} : Y\rightrightarrows X$ for the inverse of $F$.
This inverse (which always exists with possibly empty values at some points) is defined by $F^{-1}(y) :=\{x\in X \mid y\in F(x)\}$, $y\in Y$. Obviously $\dom F^{-1}=F(X)$.

For a
function $f:X\rightarrow\R_\infty$ on a metric space,
its \emph{slope} \cite{DegMarTos80} (also known as \emph{descending slope} and \emph{strong slope}; cf. \cite{AzeCor04,AzeCorLuc02,Iof00,Iof17}) and \emph{nonlocal slope} (cf. \cite{Sim91,NgaThe08,FabHenKruOut10, ChaChe14}) at $x\in\dom f$ are defined, respectively, by
\begin{align}\label{slopes}
|\nabla f|(x):=\limsup_{u\rightarrow x,\,u\ne x}\dfrac{ [f(x)-f(u)]_+}{d(u,x)} \AND
|\nabla f|^\diamond(x):=\sup\limits_{u\ne x}
\dfrac{[f(x)-f_+(u)]_+}{d(u,x)},
\end{align}
where the function $f_+:X\rightarrow\R_\infty$ is defined by $f_+(u):=(f(u))_+$ for all $u\in X$.
They characterize the maximal rate of descent of $f$ at $x$.
If $x\notin\dom f$, we set ${|\nabla f|(x)=|\nabla f|^\diamond(x):=+\infty}$.

If $X$ is a normed space, the \emph{Fr\'echet} and \emph{Clarke subdifferentials} of $f$ at $x\in\dom f$ are defined as
(cf. \cite{Kru03,Cla83,Mor06.1})
\begin{gather}\label{sdF}
\partial^F f(x):=\left\{x^*\in X^*\mid \liminf_{\substack{u\to x,\,u\ne x}} \dfrac{f(u)-f(x)-\langle x^*,u-x\rangle}{\|u-x\|}\ge 0\right\},
\\\label{sdC}
\partial^Cf(x):=\left\{x^*\in X^*\mid \langle x^*,z\rangle \le f^\circ(x,z)
\;\; \text{for all}\;\;
z\in X\right\},
\end{gather}
where 
\begin{align*}
	f^{\circ}(x;z):=\lim\limits_{\eps\downarrow0}
	\limsup_{\substack{(u,\al)\rightarrow(x,f(x))\\ f(u)\le\al,\, t\downarrow 0}} \inf_{\|z'-z\|<\eps}
	\dfrac{f(u+tz')-\al}{t}.
\end{align*}
is the \emph{Clarke--Rockafellar directional derivative} \cite{Roc79}
of $f$ at $x$ in the direction $z\in X$.
The last representation takes a simpler form when $f$ is Lipschitz continuous near $x$:
\begin{align*}
	f^{\circ}(x;z)=
	\limsup_{\substack{u\rightarrow x,\, t\downarrow 0}} 	\dfrac{f(u+tz)-f(u)}{t}.
\end{align*}
If $x\notin\dom f$, we set $\partial^F{f}(x)=\partial^C{f}(x):=\es$.
The sets \eqref{sdF} and \eqref{sdC} are closed and convex, and satisfy
$\partial^F{f}(x)\subset\partial^C{f}(x)$.
If $f$ is convex, they
reduce to the subdifferential in the sense of convex analysis:
\begin{gather*}
\partial{f}(x):= \left\{x^\ast\in X^\ast\mid
f(u)-f(x)-\langle{x}^\ast,x-x\rangle\ge 0 \;\; \text{for all} \;\; u\in X \right\}.
\end{gather*}

The above subdifferentials possess certain sum rules; cf. \cite{Roc79,Cla83,Fab89,Kru03,Mor06.1}.

\begin{lemma}
\label{SR}
Suppose $X$ is a normed space, $f_1,f_2:X\to\R \cup\{+\infty\}$, and $x\in\dom f_1\cap\dom f_2$.
\begin{enumerate}
\item
Let $f_1$ and $f_2$ be convex and $f_1$ be continuous at a point in $\dom f_2$.
Then
$$\partial(f_1+f_2)(x)=\sd f_1(x)+\partial f_2(x).$$
\item
Let
$f_1$ be Lipschitz continuous and $f_2$ be lower semicontinuous in a neighbourhood of $x$.
Then
$$\partial^C(f_1+f_2)(x)\subset\sd^C f_1(x) +\partial^Cf_2(x).$$
\item
Let $X$ be Asplund,
$f_1$ be Lipschitz continuous and $f_2$ be lower semicontinuous in a neighbourhood of $x$.
Then, for any $x^*\in\partial^F(f_1+f_2)(x)$ and $\varepsilon>0$, there exist $x_1,x_2\in X$ with $\|x_i-x\|<\varepsilon$, $|f_i(x_i)-f_i(x)|<\varepsilon$ $(i=1,2)$, such that
$$x^*\in\partial^Ff_1(x_1) +\partial^Ff_2(x_2)+\varepsilon\B^\ast.$$
\end{enumerate}
\end{lemma}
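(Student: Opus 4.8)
The statement collects three classical sum rules, each resting on a different fundamental tool, so the plan is to treat them separately. In every case the ``$\supseteq$'' inclusion (where it is meaningful) is immediate from the defining inequalities, and all the content lies in the decomposition direction. I would derive (i) from convex separation, (ii) from the sublinearity of the Clarke--Rockafellar directional derivative together with (i), and (iii) from a decoupling argument powered by a smooth variational principle; the last of these is where the work really sits.

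For part (i), after translating so that $x=0$ and $f_1(0)=f_2(0)=0$ and absorbing the given $x^*\in\sd(f_1+f_2)(0)$ into $f_1$, the claim reduces to: if $f_1+f_2\ge0$, then there is $x_1^*$ with $x_1^*\in\sd f_1(0)$ and $-x_1^*\in\sd f_2(0)$. First I would note that the strict epigraph $\{(u,\al)\mid\al>f_1(u)\}$ and the lower set $\{(u,\al)\mid\al\le-f_2(u)\}$ are disjoint convex subsets of $X\times\R$, since a common point would give $f_1(u)+f_2(u)<0$. The continuity of $f_1$ at a point of $\dom f_2$ is exactly the Slater-type condition that makes the first set have nonempty interior; this lets me apply the Hahn--Banach separation theorem and, crucially, forces the separating hyperplane to be nonvertical. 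Its slope is the sought functional $x_1^*$, and evaluating the separation inequalities along the two sets yields $f_1(u)\ge\ang{x_1^*,u}$ and $f_2(u)\ge\ang{-x_1^*,u}$ for all $u$, i.e. the desired decomposition.

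For part (ii), I would use that $\sd^C g(x)$ is, by definition, the convex-analytic subdifferential at $0$ of the sublinear function $p_g:=g^{\circ}(x;\cdot)$. Because $f_1$ is Lipschitz near $x$, $p_{f_1}$ is finite and Lipschitz (hence continuous) on all of $X$ and $\sd^C f_1(x)$ is weak$^*$-compact. The one computational input is the subadditivity $(f_1+f_2)^{\circ}(x;z)\le f_1^{\circ}(x;z)+f_2^{\circ}(x;z)$, which follows by splitting the defining upper limit using the Lipschitz control on $f_1$. Thus $p_{f_1+f_2}\le p_{f_1}+p_{f_2}$ with equality at $0$, so $\sd^C(f_1+f_2)(x)=\sd p_{f_1+f_2}(0)\subset\sd(p_{f_1}+p_{f_2})(0)$; applying the sublinear version of part (i) (legitimate since $p_{f_1}$ is continuous) gives $\sd(p_{f_1}+p_{f_2})(0)=\sd^C f_1(x)+\sd^C f_2(x)$, and the weak$^*$-compactness of $\sd^C f_1(x)$ guarantees this sum is weak$^*$-closed.

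Part (iii) is the genuinely hard step, and the place where the Asplund assumption is indispensable. I would run a decoupling argument driven by a smooth variational principle, which is available precisely in Asplund spaces (after separable reduction, a separable subspace has separable dual and thus admits a \Fr-smooth renorming). Given $x^*\in\partial^F(f_1+f_2)(x)$, I would consider, for large $\lambda>0$, the two-variable function
\[
(u,v)\mapsto f_1(u)+f_2(v)-\ang{x^*,u}+\lambda\,\theta(u-v)
\]
near $(x,x)$, where $\theta$ is a \Fr-smooth penalty vanishing only at $0$. Minimizing via the smooth variational principle produces an approximate minimizer $(u_\lambda,v_\lambda)$ with $u_\lambda,v_\lambda\to x$ and $f_i(u_\lambda),f_i(v_\lambda)\to f_i(x)$, at which the function is \Fr-subdifferentiable separately in each variable. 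Since the coupling term $\lambda\theta(u-v)$ is smooth, its partial derivatives in $u$ and in $v$ are negatives of one another, so the separate inclusions yield $x_1^*\in\partial^F f_1(u_\lambda)$ and $x_2^*\in\partial^F f_2(v_\lambda)$ with $x_1^*+x_2^*$ inside any prescribed $\varepsilon\B^*$ of $x^*$ once $\lambda$ is large; the Lipschitz continuity of $f_1$ keeps the relevant quantities bounded and controls the passage to the limit. The main obstacle, and the reason only a \emph{fuzzy} (nearby-points, $\varepsilon$-perturbed) conclusion is attainable, is exactly that there is no exact calculus for \Fr\ subgradients in this generality: everything hinges on the density of \Fr\ subgradients furnished by the Asplund property, and outside Asplund spaces the assertion fails. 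For the full technical details of each part I would refer to the cited sources.
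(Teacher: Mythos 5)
The paper does not prove this lemma at all --- it is presented as classical background with the pointer ``cf.~\cite{Roc79,Cla83,Fab89,Kru03,Mor06.1}'' --- so the only meaningful check is against the standard proofs in those sources, and your sketches pass it. Part (i) via nonvertical separation of the strict epigraph of $f_1$ from the hypograph-type set built from $-f_2$, part (ii) via subadditivity of the Clarke--Rockafellar derivative (using the Lipschitz continuity of $f_1$) reducing the claim to the convex rule applied to the two sublinear support functions, and part (iii) via Fabian-style decoupling with a smooth variational principle after separable reduction are exactly the classical arguments underlying the cited results, including your correct observations that only a fuzzy conclusion is attainable in (iii) and that the Asplund property is essential there.
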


Recall that a Banach space is \emph{Asplund} if every continuous convex function on an open convex set is Fr\'echet differentiable on a dense subset \cite{Phe93}, or equivalently, if the dual of each 
separable subspace is separable.
We refer the reader to \cite{Phe93,Mor06.1} for discussions about and characterizations of Asplund spaces.
All reflexive, particularly, all finite dimensional Banach spaces are Asplund.

The following quantities are often used for characterizing error bounds in normed spaces:
\begin{gather}\label{sld}
|\sd^Ff|(x):=d(0,\partial^Ff(x))\AND
|\sd^Cf|(x):=d(0,\partial^Cf(x)).
\end{gather}
Following \cite{Sim91,FabHenKruOut10}, we call them (respectively, \Fr\ and Clarke) \emph{subdifferential slopes} of $f$ at $x$.
When $f$ is convex, we write simply $|\sd f|(x)$\footnote{In \cite{Sim91} this quantity is called the \emph{least slope} of $f$ at $x$}.

The next lemma collects several 
relations between various slopes; cf. \cite{Iof00,AzeCor02,AzeCor04,Aze06, FabHenKruOut10,FabHenKruOut12,AzeCor14,Kru15,Iof17}, which justify their usage in sufficient and necessary conditions for error bounds and determine the hierarchy of such conditions.

\begin{lemma}\label{P1.1}
Let $X$ be a metric space, $f:X\rightarrow\R_\infty$, and $x\in\dom f$.
\begin{enumerate}
\item
If $f(x)>0$, then
$|\nabla f|(x)\le |\nabla f|^\diamond(x)$.
\cnta
\end{enumerate}
Suppose $X$ is a normed space.
\begin{enumerate}
\cntb
\item
$|\nabla f|(x)\le|\partial^F f|(x)$.
\item
If $f$ is convex, then
$|\nabla f|(x)=|\partial f|(x)$.
Moreover, if $f(x)>0$, then
$|\nabla f|(x)=|\nabla f|^\diamond(x)$.
\item
If $X$ is a Banach space and $f$ is \lsc, then
$|\nabla f|(x)\ge|\partial^Cf|(x).$
\item
If $X$ is an Asplund space and $f$ is \lsc, then
$$|\nabla f|(x)\ge\liminf_{u\to x,\,f(u)\to f(x)} |\partial^Ff|(u).$$
\end{enumerate}
\end{lemma}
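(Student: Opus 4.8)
The plan is to get parts (i) and (ii) straight from the definitions, and to treat the three ``subdifferential~$\le$~slope'' estimates — the nontrivial inequality hidden in (iii) together with (iv) and (v) — by one common device. For (i), assume $f(x)>0$ and pick $u_k\to x$, $u_k\ne x$, realising the $\limsup$ defining $|\nabla f|(x)$. If infinitely many $u_k$ satisfy $f(u_k)\le0$, then $f_+(u_k)=0$ and $\frac{[f(x)-f_+(u_k)]_+}{d(u_k,x)}=\frac{f(x)}{d(u_k,x)}\to+\infty$, so $|\nabla f|^\diamond(x)=+\infty$ and (i) is trivial; otherwise $f(u_k)>0$ eventually, $f_+(u_k)=f(u_k)$, each quotient is dominated by $|\nabla f|^\diamond(x)$, and passing to the limit gives (i). For (ii) I may assume $\partial^Ff(x)\ne\es$. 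For $x^*\in\partial^Ff(x)$ and $\eps>0$, the defining inequality of the \Fr\ subdifferential gives $f(x)-f(u)\le\langle x^*,x-u\rangle+\eps\|u-x\|\le(\|x^*\|+\eps)\|u-x\|$ for $u$ near $x$; dividing by $\|u-x\|$, taking the $\limsup$, then the infimum over $x^*$ and $\eps\downarrow0$ yields $|\nabla f|(x)\le d(0,\partial^Ff(x))=|\partial^Ff|(x)$.

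The core is the reverse estimates, which I would treat uniformly. Assume $|\nabla f|(x)<+\infty$ (the inequalities being trivial otherwise) and fix any $s>|\nabla f|(x)$. Unwinding the definition of the slope produces a neighbourhood $V$ of $x$ with $f(v)>f(x)-s\,d(v,x)$ for $v\in V\setminus\{x\}$; equivalently, \emph{$x$ is a local minimiser of the perturbed function} $g:=f+s\|\cdot-x\|$. By the respective Fermat rule (at a local minimiser the corresponding subdifferential contains $0$), $0\in\partial^\bullet g(x)$. I then apply the matching sum rule from Lemma~\ref{SR} with $f_1:=s\|\cdot-x\|$ (globally $s$-Lipschitz, continuous and convex) and $f_2:=f$, using that every $s$-Lipschitz function has all its subdifferentials contained in $s\overline{\B}^*$. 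In the convex case the exact convex sum rule gives $0\in\partial f(x)+s\overline{\B}^*$, so $d(0,\partial f(x))\le s$; in the Banach/\lsc\ case the exact Clarke sum rule gives $0\in\partial^Cf(x)+s\overline{\B}^*$, so $d(0,\partial^Cf(x))\le s$; in the Asplund/\lsc\ case only the \emph{fuzzy} \Fr\ sum rule is available, producing for each $\eps>0$ points $x_1,x_2$ near $x$ with $|f(x_2)-f(x)|<\eps$ and $0\in\partial^Ff_1(x_1)+\partial^Ff(x_2)+\eps\B^*\subseteq\partial^Ff(x_2)+(s+\eps)\overline{\B}^*$, i.e.\ $|\partial^Ff|(x_2)\le s+\eps$ at some $x_2\to x$ with $f(x_2)\to f(x)$. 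Letting $s\downarrow|\nabla f|(x)$ (and $\eps\downarrow0$) yields $|\partial f|(x)\le|\nabla f|(x)$, assertion (iv), and the required bound on the $\liminf$ in (v), respectively. The appearance of the nearby base point $x_2$ rather than $x$ is forced by the fuzziness of the \Fr\ rule, and is exactly what distinguishes the liminf form of (v) from the pointwise form of (iv).

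It remains to finish (iii). The inequality $|\nabla f|(x)\le|\partial f|(x)$ is (ii) specialised to convex $f$ (for which $\partial^Ff=\partial f$), and the previous paragraph supplies the reverse, giving the first equality. For the second equality assume $f(x)>0$; then (i) gives $|\nabla f|(x)\le|\nabla f|^\diamond(x)$, and for the reverse I use convexity along segments: for $u\in\dom f$, $u\ne x$, and $t\in\,]0,1[$, put $x_t:=x+t(u-x)$, so that convexity gives $f(x)-f(x_t)\ge t(f(x)-f(u))$ and hence $\frac{[f(x)-f(x_t)]_+}{d(x_t,x)}\ge\frac{[f(x)-f(u)]_+}{d(u,x)}\ge\frac{[f(x)-f_+(u)]_+}{d(u,x)}$. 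Letting $t\downarrow0$ (so $x_t\to x$) bounds the right-hand side by $|\nabla f|(x)$, and the supremum over $u$ gives $|\nabla f|^\diamond(x)\le|\nabla f|(x)$ (the case $f(u)=+\infty$ being trivial).

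The routine content is (i), (ii) and the segment estimate. The real work is the common template for (iii)–(v), and the main obstacle is less any single computation than the careful justification of its three ingredients: that a slope bound genuinely makes $x$ a local minimiser of $f+s\|\cdot-x\|$, that the Fermat rule holds for the \emph{merely \lsc} $g$ in the Clarke and \Fr\ settings, and above all the correct bookkeeping of the fuzzy sum rule, whose unavoidable perturbation of the base point produces the $\liminf$ in (v). One must also dispatch the degenerate cases ($|\nabla f|(x)=+\infty$, and empty subdifferentials, where the relevant subdifferential slope is $+\infty$ under the convention $d(0,\es)=+\infty$). Finally, for (iii) I would deliberately route the reverse inequality through the convex sum rule rather than the support-function/distance duality $d(0,\partial f(x))=\sup_{\|v\|\le1}(-f'(x;v))$, since the latter, though available, conceals a minimax step and weak${}^*$-closedness considerations that the sum-rule argument sidesteps.
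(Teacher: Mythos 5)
Your proof is correct and follows essentially the same route the paper indicates (see Remark~\ref{R1.2}): parts (i)--(iii) directly from the definitions, and the reverse estimates in (iii)--(v) obtained from the sum rules of Lemma~\ref{SR} via the perturbation $f+s\|\cdot-x\|$ and the Fermat rule, with the fuzzy \Fr\ sum rule producing exactly the $\liminf$ in (v).
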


\begin{remark}\label{R1.2}
Assertions (i)--(iii) in Lemma~\ref{P1.1} are straightforward.
The more involved assertions (iv) and (v) are consequences of the sum rules in Lemma~\ref{SR} for the respective subdifferentials.
Observe that the fuzzy sum rule for \Fr\ subdifferentials in Lemma~\ref{SR}(iii) naturally translates into the `fuzzy' (due to the $\liminf$ operation) inequality in (v).
The latter estimate was first established (in a slightly more general setting of `abstract' subdifferentials) in \cite[Proposition~4.1]{AzeCor04} (see also \cite[Proposition~3.1]{Iof00}).
We have failed to find assertion (iv) explicitly formulated in the literature.
However, its proof only requires replacing the fuzzy sum rule in part (iii) of Lemma~\ref{SR} with the `exact' one in part (ii); cf.
the reasoning provided in \cite[Remark~6.1]{AzeCor14} to justify a similar fact involving the limiting subdifferentials.

Clarke subdifferentials in Lemma~\ref{P1.1}(iv) and all the other statements in this paper can be replaced with Ioffe's \emph{approximate $G$-subdifferentials} \cite{Iof17} as they possess a sum rule similar to the one in Lemma~\ref{SR}(ii); see \cite[Theorem~4.69]{Iof17}.
Moreover, it is clear that instead of Clarke subdifferentials in general Banach spaces and \Fr\ subdifferentials in Asplund spaces as in parts (iv) and (v) of Lemma~\ref{P1.1}, one can consider more general \emph{subdifferential pairs} \cite{AzeCorLuc02} (subdifferentials `trusted' on a given space \cite{Iof17}) with subdifferentials possessing a sum rule either in the exact or fuzzy form, respectively.
We use Clarke and Fr\'echet subdifferentials in this paper to keep the presentation simple.
\end{remark}

We recall chain rules for slopes, and Clarke and Fr\'echet subdifferentials; cf., e.g., \cite{Kru03,NgaThe09,YaoZhe16,AzeCor17,CuoKru20.2,CuoKru21.2}.

\begin{lemma}\label{L2}
Let $X$ be a metric space, $f:X\rightarrow\R_\infty$, $\varphi:\R\rightarrow\R_\infty$,
$x\in\dom f$ and $f(x)\in\dom\varphi$.
Suppose $\varphi$ is nondecreasing on $\R$ and differentiable at $f(x)$ with $\varphi'(f(x))>0$.
\begin{enumerate}
\item
$|\nabla(\varphi\circ f)|(x)=\varphi'(f(x))|\nabla f|(x).$
\item
Let $f(x)>0$, $\varphi(t)\le0$ if $t\le0$, $\varphi(t)>0$ if $t>0$, and $\varphi$ is differentiable on $]0,f(x)[$,
with
$\varphi'$ is nonincreasing (nondecreasing).
Then
$|\nabla(\varphi\circ f)|^\diamond(x)\ge\varphi'(f(x))|\nabla f|^\diamond(x)$ (${|\nabla(\varphi\circ f)|^\diamond(x)}\le\varphi'(f(x))|\nabla f|^\diamond(x)$).
\cnta
\end{enumerate}
Suppose $X$ is a normed space.
\begin{enumerate}
\cntb
\item
$\sd^F(\varphi{\circ}f)(x)= \varphi'(f(x))\sd^Ff(x)$.
\item
If $\varphi$ is strictly differentiable at $f(x)$, then $\sd^C(\varphi{\circ}f)(x)= \varphi'(f(x))\sd^Cf(x)$.
\end{enumerate}
\end{lemma}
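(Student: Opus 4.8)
The plan is to treat the four assertions in increasing order of difficulty, writing throughout $\bar t:=f(x)$ and $c:=\varphi'(f(x))>0$, and exploiting two features of $\varphi$ repeatedly: its monotonicity (to control values of $f$ that jump away from $\bar t$, since $f$ is not assumed continuous) and its differentiability at $\bar t$ (to linearise it near $\bar t$). For (i) I would argue through the difference quotients defining the two slopes. Since $\varphi$ is nondecreasing, for every $u$ one has $[\varphi(f(x))-\varphi(f(u))]_+>0$ only when $f(u)<f(x)$, exactly as for $[f(x)-f(u)]_+$, so only such $u$ contribute. For ``$\le$'' I would take a sequence $u_n\to x$, $u_n\ne x$, realising $|\nabla(\varphi\circ f)|(x)$; for these $f(u_n)<f(x)$, and since the quotients stay bounded while $d(u_n,x)\to0$, monotonicity together with $c>0$ forces $f(u_n)\uparrow f(x)$. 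Factorising
\[
\frac{\varphi(f(x))-\varphi(f(u_n))}{d(u_n,x)}=\frac{\varphi(f(x))-\varphi(f(u_n))}{f(x)-f(u_n)}\cdot\frac{f(x)-f(u_n)}{d(u_n,x)},
\]
the first factor tends to $c$ by differentiability at $\bar t$, while the $\limsup$ of the second is $\le|\nabla f|(x)$, giving $|\nabla(\varphi\circ f)|(x)\le c\,|\nabla f|(x)$. The reverse inequality follows symmetrically from a sequence realising $|\nabla f|(x)$, the case $|\nabla f|(x)=0$ being trivial.

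For (ii) the nonlocal slope is a supremum rather than a limit, so I would establish a pointwise (in $u$) comparison and then take suprema. Writing $g:=\varphi\circ f$ and using $\varphi(t)\le0$ for $t\le0$, $\varphi(t)>0$ for $t>0$, one checks $g_+(u)=(\varphi(f_+(u)))_+$, so with $a:=f(x)>0$ and $b:=f_+(u)$ the two numerators become $[\varphi(a)-(\varphi(b))_+]_+$ and $[a-b]_+$. For $b\ge a$ both vanish; for $0<b<a$ the mean value theorem on $[b,a]\subset\,]0,f(x)]$ gives $\varphi(a)-\varphi(b)=\varphi'(\xi)(a-b)$ with $\xi\in\,]b,a[$, and monotonicity of $\varphi'$ yields $\varphi'(\xi)\ge\varphi'(a)=c$ when $\varphi'$ is nonincreasing (respectively $\le c$ when nondecreasing); the boundary value $b=0$ is recovered by letting $b\downarrow0$ and using $\varphi(0^+)\ge0=\varphi(0)$. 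This produces $[\varphi(a)-(\varphi(b))_+]_+\ge c\,[a-b]_+$ (respectively $\le$) for every $u$, and taking the supremum over $u\ne x$, together with $\sup(c\,h)=c\sup h$ for $c>0$, gives the two asserted inequalities.

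Assertion (iii) I would prove by the two inclusions directly from definition~\eqref{sdF}. For ``$\supseteq$'', given $x^*\in\partial^Ff(x)$ and $\eps>0$, I would split the $u$ near $x$ into those with $|f(u)-\bar t|\le\eta$ and those with $f(u)>\bar t+\eta$ (the values $f(u)<\bar t-\eta$ being excluded because $x^*\in\partial^Ff(x)$ forces $f(u)\ge\bar t-(\|x^*\|+\eps)\|u-x\|$). On the first set the differentiability estimate $|\varphi(f(u))-\varphi(\bar t)-c(f(u)-\bar t)|\le\eps|f(u)-\bar t|$ combines with the subdifferential inequality for $f$ to yield $\varphi(f(u))-\varphi(\bar t)-\langle cx^*,u-x\rangle\ge-C\eps\|u-x\|$; on the second set monotonicity gives $\varphi(f(u))-\varphi(\bar t)\ge(c-\eps)\eta>0$, a fixed positive quantity dominating $\langle cx^*,u-x\rangle=O(\|u-x\|)$. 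Letting $\eps\downarrow0$ gives $cx^*\in\partial^F(\varphi\circ f)(x)$. The inclusion ``$\subseteq$'' is entirely analogous: given $y^*\in\partial^F(\varphi\circ f)(x)$ one shows $y^*/c\in\partial^Ff(x)$ by the same case split, now using that $\varphi$ is strictly increasing near $\bar t$ (as $c>0$) to invert the differentiability estimate.

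Finally, for (iv) I would reduce the set identity to the directional identity $(\varphi\circ f)^\circ(x;z)=c\,f^\circ(x;z)$ for all $z\in X$; since $g^\circ(x;\cdot)$ is the support function of the closed convex set $\partial^Cg(x)$ (by definition~\eqref{sdC}), this identity immediately yields $\partial^C(\varphi\circ f)(x)=c\,\partial^Cf(x)$. To prove the directional identity I would use strict differentiability of $\varphi$ at $\bar t$ to linearise $\varphi$ uniformly near $\bar t$ and its local inverse $\psi=\varphi^{-1}$ (differentiable at $\varphi(\bar t)$ with $\psi'=1/c$) to translate the epigraph variable in the Clarke--Rockafellar quotient: writing $\al=\varphi(\beta)$ with $\beta=\psi(\al)\to\bar t$, the condition $g(u)\le\al$ becomes $f(u)\le\beta$ and the numerator $g(u+tz')-\al$ becomes $\varphi(f(u+tz'))-\varphi(\beta)$, which equals $c\,(f(u+tz')-\beta)$ up to $o$-terms, with monotonicity of $\varphi$ guaranteeing that the inner infima over $z'$ in the two problems are attained in the same direction. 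I expect this last step to be the main obstacle: unlike the slope and \Fr\ cases, the Clarke--Rockafellar derivative couples the epigraph limit $(u,\al)\to(x,g(x))$, the limit $t\downarrow0$ and the infimum over $z'$, so the linearisation of $\varphi$ must be carried out uniformly in all these variables, and the values $f(u+tz')$ far from $\bar t$ must be dispatched by monotonicity without destroying the $\limsup/\inf$ structure.
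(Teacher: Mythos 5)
Your arguments for (i)--(iii) are essentially correct, and it is worth noting how they sit relative to the paper: the paper's own proof covers only assertion (ii) (it treats (i), (iii), (iv) as known chain rules, referring (iii) to \cite[Proposition~2.1]{CuoKru20.2}), and for (ii) your proof is the same as the paper's --- the identification of $(\varphi\circ f)_+(u)$ with $\varphi(f_+(u))$ (in your corrected form $(\varphi(f_+(u)))_+$), the mean value theorem on $[f_+(u),f(x)]$, monotonicity of $\varphi'$ to compare $\varphi'(\xi)$ with $\varphi'(f(x))$, and passage to the supremum. Your limiting treatment of the endpoint $b=0$ is in fact more careful than the paper's direct MVT on $[0,f(x)]$, which tacitly uses right-continuity of $\varphi$ at $0$ (as does your identity $\varphi(0^+)=0$; without it the nondecreasing half of (ii) can fail, so this is an implicit hypothesis shared with the paper, not an error of yours). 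Your self-contained proofs of (i) and (iii) are sound, with one small omission in (i): the phrase ``the quotients stay bounded'' presupposes the slope is finite, so the cases $|\nabla(\varphi\circ f)|(x)=+\infty$ or $|\nabla f|(x)=+\infty$ need a separate line --- either $f(u_n)\to f(x)$ along a subsequence and the factorisation still applies, or $f(x)-f(u_n)$ stays bounded away from $0$, in which case the other numerator is bounded below by a positive constant (monotonicity plus differentiability at $f(x)$) while $d(u_n,x)\to0$, so both slopes are infinite.

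The genuine gap is assertion (iv). What you offer there is a plan, not a proof, and the obstruction you flag yourself is real: in the Clarke--Rockafellar quotient the epigraphical limit $(u,\al)\to(x,(\varphi\circ f)(x))$ with $(\varphi\circ f)(u)\le\al$, the limit $t\downarrow0$, and the infimum over $z'$ are coupled, so the pointwise linearisation $\varphi(s)-\varphi(\beta)=c(s-\beta)+o(|s-\beta|)$ cannot simply be substituted into the formula --- the error term $o(|f(u+tz')-\beta|)$ need not be $o(t)$, and values $f(u+tz')$ far from $\bar t$ cannot be ``dispatched by monotonicity'' inside a $\limsup$-$\inf$ without changing the value. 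The clean way to close this is to avoid the formula for $f^\circ$ altogether and argue on epigraphs: strict differentiability of $\varphi$ at $\bar t$ with $c>0$ makes $\varphi$ strictly increasing and bi-Lipschitz near $\bar t$, with local inverse $\psi$ strictly differentiable at $\varphi(\bar t)$; global monotonicity plus local strict increase give $\varphi(f(u))\le\al\iff f(u)\le\psi(\al)$ for all $u$ once $\al$ is near $\varphi(\bar t)$, so the map $\Phi(u,\al):=(u,\psi(\al))$ is a local homeomorphism carrying $\epi(\varphi\circ f)$ onto $\epi f$ near the reference point, strictly differentiable there with invertible derivative $(h,s)\mapsto(h,s/c)$. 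Since the Clarke tangent cone transforms under such maps by the derivative, and since $f^\circ(x;\cdot)$ and $\partial^Cf(x)$ are determined by $T^C_{\epi f}(x,f(x))$ (Rockafellar's characterization of the Clarke--Rockafellar derivative as the lower support function of that cone), one gets $(\varphi\circ f)^\circ(x;z)=c\,f^\circ(x;z)$ for all $z$ and hence $\partial^C(\varphi\circ f)(x)=c\,\partial^Cf(x)$ without touching the coupled limits. Without this (or an equivalent) argument, (iv) remains unproved in your proposal.
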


\begin{proof}
Only assertion (ii) seems new.
If $f$ attains its minimum on $X$ at $x$, then, thanks to the monotonicity of $\varphi$, we have
$|\nabla({\varphi\circ f})|^\diamond(x)=|\nabla f|^\diamond(x)=0$.
Thanks to the assumptions on $\varphi$, we have $(\varphi\circ f)_+(u)={\varphi(f_+(u))}$ and, by the mean value theorem,
\begin{align*}
\dfrac{\varphi(f(x))-\varphi(f_+(u))}{d(u,x)} =\varphi'(\theta)\dfrac{f(x)-f_+(u)}{d(u,x)}
\end{align*}
for some $\theta\in]f_+(u),f(x)[$.
The claimed inequalities follow from the respective monotonicity assumptions on $\varphi'$.
For assertion (iii), we refer the reader to \cite[Proposition~2.1]{CuoKru20.2}.
\end{proof}

\if{
\red{$|\mathfrak{D} f|(u)$}
\red{$|\mathfrak{d} f|(u)$}
\red{$|\mathcal{D} f|(u)$}
\red{$|\mathbb{D} f|(u)$}

\red{$|\tilde{\nabla} f|(u)$}
\red{$|\breve{\nabla} f|(u)$}
\red{$|\hat{\nabla} f|(u)$}
\red{$|\widehat{\nabla} f|(u)$}
\red{$|\widetilde{\nabla} f|(u)$}
\red{$|\overline{\nabla} f|(u)$}

\red{$|\sd f|(u)$}
\red{$|\sd^F f|(u)$}
\red{$|\sd^C f|(u)$}
}\fi

When formulating sufficient error bound conditions, we use special collections of slope operators.
This allows us to combine several assertions into one.
The first collection is defined recursively as follows:
\begin{enumerate}
\item
$|\mathfrak{D}f|^\circ:=\{|\nabla f|\}$;
\item
if $X$ is Banach, then
$|\mathfrak{D}f|^\circ :=|\mathfrak{D}f|^\circ\cup\{|\sd^C f|\}$;
\item
if $X$ is Asplund, then
$|\mathfrak{D}f|^\circ:=|\mathfrak{D}f|^\circ\cup\{|\sd^F f|\}$.
\cnta
\end{enumerate}
The other two collections are variations of $|\mathfrak{D}f|^\circ$:
\begin{enumerate}
\cntb
\item
${|\mathfrak{D}f|}:=
|\mathfrak{D}f|^\circ\cup\{|\nabla f|^\diamond\}$;
\item
${|\mathfrak{D}f|^\dag}:=
|\mathfrak{D}f|\setminus\{|\sd^F f|\}$.
\end{enumerate}
Thus, if $X$ is an Asplund space, then $|\mathfrak{D}f|=\{|\nabla f|^\diamond,|\nabla f|,|\sd^C f|,|\sd^F f|\}$,
and if $X$ is a
Banach space and $f$ is convex, then
$|\mathfrak{D}f|=|\mathfrak{D}f|^\dag=\{|\nabla f|^\diamond,|\nabla f|,|\sd f|\}$.
The `full' set $|\mathfrak{D}f|$ is going to play the main role in the sufficient error bounds conditions below.
In some conditions, we also use the `truncated' sets $|\mathfrak{D}f|^\circ$ and $|\mathfrak{D}f|^\dag$, excluding the operators $|\nabla f|^\diamond$ and $|\sd^F f|$, respectively.
\if{We are going to use also a truncated set
\begin{gather*}
|\mathfrak{D} f|^\circ:=|\mathfrak{D} f|\setminus\{|\nabla f|^\diamond\}.
\end{gather*}
}\fi

\section{Conventional linear error bound conditions} \label{S3}

The next preliminary statement treats the case when $x$ in the definition \eqref{D0-1} of linear error bounds is fixed.
It contains all the main ingredients used in the general statement (Theorem~\ref{T2.2}), the latter being an easy consequence of the first.
The nonlinear error bound statements in the subsequent sections are also direct or indirect consequences of the next proposition.

\begin{proposition}\label{T2.1}
Suppose $X$ is a metric space, $f:X \rightarrow \mathbb{R}_\infty$, $x\in[f>0]$, and $\tau>0$.
\begin{enumerate}
\item
Let $X$ be complete, $f$ be lower semicontinuous, and ${\al\in]0,1]}$.
The error bound inequality \eqref{D0-1}
holds at $x$, provided that one of the following conditions is satisfied:
\begin{enumerate}
\item
$|\breve\nabla f|\in|\mathfrak{D}f|^\dag$ and
$\al|\breve\nabla f|(u)\ge\tau$ for
all $u\in X$ satisfying
\begin{gather}\label{T2.1-2}
f(u)\le f(x),
\\\label{T2.1-3}
d(u,x)<\alpha d(x,[f\le0]),
\\\label{T2.1-4}
\al f(u)<\tau d(u,[f\le0]),
\\\label{T2.1-5}
f(u)<\tau d(x,[f\le0]);
\end{gather}

\item
$X$ is Asplund and there exists a $\mu>f(x)$ such that $\al|\sd^F f|(u)\ge\tau$
for all $u\in X$ satisfying $f(u)<\mu$, and conditions \eqref{T2.1-3}--\eqref{T2.1-5}.
\end{enumerate}

\item
If the error bound inequality \eqref{D0-1}
holds, then
$|\nabla f|^\diamond(x)\ge\tau$.
\item
Let $X$ be a normed space, and $f$ be convex.
If the error bound inequality \eqref{D0-1}
holds, then
$|\partial f|(x)\ge\tau$.
\end{enumerate}
\end{proposition}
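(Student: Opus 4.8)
The plan is to dispatch the two necessary conditions (ii) and (iii) directly from the definitions, and to establish the sufficient conditions (i) by contradiction using the Ekeland variational principle applied to $f_+$.

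For (ii), I would assume \eqref{D0-1} holds at $x$, write $r:=d(x,[f\le0])$, and test the supremum defining $|\nabla f|^\diamond(x)$ only at points $u\in[f\le0]$, where $f_+(u)=0$ so the numerator equals $f(x)$. This gives $|\nabla f|^\diamond(x)\ge f(x)/r\ge\tau$, the last inequality being exactly \eqref{D0-1}; the degenerate cases $r\in\{0,+\infty\}$ are covered by the stated conventions. Part (iii) is then immediate: for convex $f$ with $f(x)>0$, Lemma~\ref{P1.1}(iii) gives $|\partial f|(x)=|\nabla f|(x)=|\nabla f|^\diamond(x)$, so (ii) yields $|\partial f|(x)\ge\tau$.

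For (i), suppose the error bound fails, i.e.\ $f(x)<\tau r$ (with the usual conventions when $r=+\infty$). I would apply the Ekeland variational principle to the lower semicontinuous nonnegative function $f_+$, which satisfies $f_+(x)=f(x)\le\inf_X f_++f(x)$, taking $\varepsilon:=f(x)$ and a radius $\lambda$ in the interval $]\alpha f(x)/\tau,\alpha r[$, which is nonempty precisely because $f(x)<\tau r$. This produces $\bar u$ with $f_+(\bar u)\le f_+(x)$, the descent estimate $\frac{\varepsilon}{\lambda}d(\bar u,x)\le f_+(x)-f_+(\bar u)$, and the global minimality property $f_+(\bar u)\le f_+(u)+\frac{\varepsilon}{\lambda}d(u,\bar u)$ for all $u$. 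Since $d(\bar u,x)<\lambda<\alpha r\le r$, the point $\bar u$ lies outside $[f\le0]$, so $f(\bar u)>0$ and, by lower semicontinuity, $f=f_+$ near $\bar u$. Conditions \eqref{T2.1-2}, \eqref{T2.1-3}, \eqref{T2.1-5} follow readily from $f_+(\bar u)\le f(x)<\tau r$ and $d(\bar u,x)<\alpha r$. The global minimality property, together with $f(\bar u)=f_+(\bar u)$ and $f_+=(f_+)_+$, yields $|\nabla f|^\diamond(\bar u)=|\nabla f_+|^\diamond(\bar u)\le\varepsilon/\lambda<\tau/\alpha$, whence the whole chain $|\partial^C f|(\bar u)\le|\nabla f|(\bar u)\le|\nabla f|^\diamond(\bar u)<\tau/\alpha$ via Lemma~\ref{P1.1}(i),(iv). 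Thus every operator $|\breve\nabla f|\in|\mathfrak{D}f|^\dag$ satisfies $\alpha|\breve\nabla f|(\bar u)<\tau$, contradicting hypothesis (i)(a).

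The step I expect to be the crux is verifying the fourth condition \eqref{T2.1-4} for $\bar u$: the crude bound $d(\bar u,x)\le\lambda$ is too weak, and I would instead feed the sharp Ekeland estimate $d(\bar u,x)\le\frac{\lambda}{f(x)}\bigl(f(x)-f(\bar u)\bigr)$ into $d(\bar u,[f\le0])\ge r-d(\bar u,x)$; after rearranging, $\alpha f(\bar u)<\tau d(\bar u,[f\le0])$ reduces to $\tau(r-\lambda)+f(\bar u)\bigl(\frac{\tau\lambda}{f(x)}-\alpha\bigr)>0$, both terms of which are positive by the choice $\lambda\in]\alpha f(x)/\tau,\alpha r[$. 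Finally, for the Asplund case (i)(b), where only the fuzzy estimate of Lemma~\ref{P1.1}(v) is available, I would not bound $|\partial^F f|$ at $\bar u$ itself but extract from $\liminf_{v\to\bar u,\,f(v)\to f(\bar u)}|\partial^F f|(v)\le|\nabla f|(\bar u)<\tau/\alpha$ a sequence $v_k\to\bar u$ with $f(v_k)\to f(\bar u)$ and $|\partial^F f|(v_k)<\tau/\alpha$. Since conditions \eqref{T2.1-3}--\eqref{T2.1-5} and $f(\bar u)<\mu$ (recall $f(\bar u)\le f(x)<\mu$) hold with strict inequalities at $\bar u$, and $d(\cdot,[f\le0])$ is continuous, they persist for large $k$ — which is exactly why (i)(b) replaces \eqref{T2.1-2} by the open condition $f(u)<\mu$. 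This again contradicts the hypothesis and completes the proof.
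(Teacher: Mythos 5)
Your proof is correct and follows essentially the same route as the paper's: argue by contradiction, apply the Ekeland variational principle to $f_+$ with rate $f(x)/\lambda<\tau/\alpha$ (equivalent to the paper's choice $\al\iv\tau'$ with $\tau'\in]f(x)/d(x,[f\le0]),\tau[$), pass to the remaining slopes in $|\mathfrak{D}f|^\dag$ via Lemma~\ref{P1.1}(i),(iv), and treat the Asplund case through the fuzzy estimate of Lemma~\ref{P1.1}(v), using that the strict inequalities \eqref{T2.1-3}--\eqref{T2.1-5} and $f(u)<\mu$ persist under the limiting procedure. The only cosmetic deviation is your verification of \eqref{T2.1-4}: the paper gets it in one line by testing the Ekeland minimality inequality at points $u'\in[f\le0]$, which gives $\al f(\bar u)\le\tau' d(\bar u,[f\le0])<\tau d(\bar u,[f\le0])$ directly, whereas you reach the same conclusion by a slightly longer but valid computation combining the descent estimate with the triangle inequality.
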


\if{
\AK{30/09/20.
Can condition (d) be weakened? I am thinking about something like this:\\
$X$ is Asplund and, for any $\theta\in]0,1[$, there exists a $\mu>f(x)$ such that $d(0,\partial^Ff(u))\ge\al\iv\tau\theta$
for all $u\in X$ satisfying $f(u)<\mu$, and conditions \eqref{T2.1-3} and \eqref{T2.1-4}.
}
\NDC{19/11/20
I think the condition can be weakened as you suggested since we have the flexibility of choosing $\varepsilon$ in the Asplund space.
At the same time, it seems to me that the weakened condition is not very meaningful since it requires the metric subdifferential inequality hold for all $\theta\in]0,1[$.
I meant when checking the error bound, we have to use $\theta=1$.}
}\fi

All but one arguments in the short proof below have been used many times in numerous proofs of this type of assertions.

\begin{proof}
\begin{enumerate}
\item
Suppose that the error bound inequality \eqref{D0-1} does not
hold at $x$, i.e.
\begin{align}\label{T2.1P1}
f(x)<\tau d(x,[f \le 0]).
\end{align}
Choose a $\tau'\in]0,\tau[$ such that $f(x)<\tau'd(x,[f \le 0]).$
By the \EVP\ applied to the \lsc\ function $f_+:=\max\{f,0\}$, there exists a point $u\in X$ satisfying \eqref{T2.1-2} and \eqref{T2.1-3}, and such that
\begin{align}\label{T2.1P2}
f_+(u)\le f_+(u')+\al\iv\tau'd(u',u) \qdtx{for all}u'\in X.
\end{align}
We show that $u$ satisfies also \eqref{T2.1-4} and \eqref{T2.1-5}, while $\al|\nabla f|^\diamond(u)<\tau$.
By \eqref{T2.1-3}, $u\notin[f\le0]$.
Hence, $f_+(u)=f(u)$, and it follows from \eqref{T2.1P2} that
$\al|\nabla f|^\diamond(u)\le\tau'<\tau$, and $\al f(u)\le\tau'd(u',u)$ for all $u'\in[f\le0]$.
The last inequality yields \eqref{T2.1-4}, while \eqref{T2.1P1} and \eqref{T2.1-2} imply \eqref{T2.1-5}.
This proves the sufficiency of condition~(a) with $|\breve\nabla f|=|\nabla f|^\diamond$.
The sufficiency of this condition with the other components of $|\mathfrak{D}f|^\circ$ and the implication (b) \folgt (a) are consequences of Lemma~\ref{P1.1}(i), (iv) and~(v).

\item
is an immediate consequence of the definition of the nonlocal slope  in \eqref{slopes}.
\item
follows from (ii) thanks to Lemma~\ref{P1.1}(iii).
\end{enumerate}
\end{proof}

\begin{remark}\label{R2.1--}
\begin{enumerate}
\item
In view of the definition of $|\mathfrak{D}f|^\dag$,
condition~(a) in Proposition~\ref{T2.1}(i) combines three separate primal and dual sufficient error bound conditions:
\begin{enumerate}
\item[(a1)]
$\al|\nabla f|^\diamond(u)\ge\tau$
for all $u\in X$ satisfying conditions \eqref{T2.1-2}--\eqref{T2.1-5};
\item[(a2)]
$\al|\nabla f|(u)\ge\tau$
for all $u\in X$ satisfying conditions \eqref{T2.1-2}--\eqref{T2.1-5};
\item[(a3)]
$X$ is Banach and $\al|\sd^Cf|(u)\ge\tau$
for all $u\in X$ satisfying conditions \eqref{T2.1-2}--\eqref{T2.1-5}.
\end{enumerate}
Moreover, thanks to parts (i), (iv) and~(v) of Lemma~\ref{P1.1}, we have {\rm (a3) \folgt (a2) \folgt (a1)} and {\rm (b) \folgt (a2)}.
Thus, condition (a) in Proposition~\ref{T2.1}(i) can be replaced equivalently with the simpler condition (a1), the weakest of the three sufficient conditions above.
\if{
Note that condition (a1) is nonlocal and, hence, not very convenient for applications.
The majority of authors prefer local conditions of the type (a2), (a3) or (b).
Such conditions are often established independently as separate assertions.
Combining them in a single statement makes the presentation more succinct.
}\fi
Conditions (a1), (a2), (a3) and (b) represent four types of sufficient error bound conditions frequently appearing in the literature, with
each of them having its own area of applicability.
Such conditions are often proved independently as separate assertions.
Proposition~\ref{T2.1}(i) seems to be the first attempt to combine them in a single statement.

With obvious minor adjustments, this observation applies to all assertions in this paper containing multi-component collections of slope operators $|\mathfrak{D}f|^\circ$, $|\mathfrak{D}f|^\dag$ or $|\mathfrak{D}f|$.

\item
The core of Proposition~\ref{T2.1}(i) is made of the sufficiency of condition (a1), which is a consequence of the \EVP, and implications (a3) \folgt (a2) and (b)~\folgt (a2), which follow from the sum rules for respective subdifferentials.
\item
The parameter $\al$ in Proposition~\ref{T2.1}(i) arises naturally from the application of the \EVP.
In most cases this type of assertions are formulated with $\al=1$.
Taking a smaller $\al$, strengthens the slope
inequalities in the sufficient conditions at the expense of reducing the set of points satisfying inequality \eqref{T2.1-3}.
This `trade-off' parameter has been used in several publications
\cite{YaoZhe16,ZheZhu16,KruLopYanZhu19,ZhaZhe19}.
\item
Restrictions \eqref{T2.1-2}--\eqref{T2.1-5} on the choice of $u\in X$ and inequality $f(u)<\mu$ in condition (b) in Proposition~\ref{T2.1}(i)
also arise naturally from the application of the \EVP.
Weakening or dropping any/all of these restrictions produces new (stronger!) sufficient conditions widely used in the literature.
This can be particularly relevant in the case of restrictions \eqref{T2.1-3}--\eqref{T2.1-5} involving distances to the unknown set $[f\le0]$.
Note that, if restriction \eqref{T2.1-3} is dropped, it makes sense checking the resulting sufficient condition with $\al=1$.
If a point $\bx\in[f\le0]$ is known, restrictions \eqref{T2.1-3}, \eqref{T2.1-4} or \eqref{T2.1-5} can be replaced, respectively, with the weaker inequalities:
\begin{gather*}
d(u,x)<\alpha d(x,\bx),\quad
\al f(u)<\tau d(u,\bx)
\qdtx{or}
f(u)<\tau d(x,\bx).
\end{gather*}
\item
This seems to be the first time that
inequality \eqref{T2.1-5} appears as a part of sufficient linear error bound conditions.
It is a bit surprising since its nonlinear analogues have been exploited in the literature; see, e.g., \cite{ZhaZhe19}.
We demonstrate in the next theorem that this inequality can be meaningful in the linear setting too, thus, paving the way to the subsequent nonlinear extensions.
\item
Since $\partial^F{f}(x)\subset\partial^C{f}(x)$, the subdifferential slope inequality in condition (a3)
obviously implies the one in condition (b).
However, the implication (a3)~\folgt (b) is not true in general because the restriction $f(u)<\mu$ in (b) is weaker than the corresponding inequality \eqref{T2.1-2} in (a).
The number $\mu>f(x)$ in condition (b) can be chosen arbitrarily close to $f(x)$, but cannot be replaced with $f(x)$ because of the `fuzzy' inequality in Lemma~\ref{P1.1}(v), which, in turn, is a consequence of the fuzzy sum rule for \Fr\ subdifferentials in Lemma~\ref{SR}(iii).
\end{enumerate}
\end{remark}

The general error bound statement in the next theorem is a straightforward consequence of Proposition~\ref{T2.1}.
In accordance with the convention made in the Introduction after Definition~\ref{D0}, we talk here about the function $f$ admitting a $\tau-$error bound at $\bx$ `with $\delta$ (or $\delta'$) and~$\mu$'.

\begin{theorem}\label{T2.2}
Suppose $X$ is a metric space, $f:X \rightarrow \mathbb{R}_\infty$, $\bx\in X$, $\tau>0$, $\de\in]0,+\infty]$ and $\mu\in]0,+\infty]$.
\begin{enumerate}
\item
Let $X$ be complete, $f$ be lower semicontinuous, $\al\in]0,1]$, and either $\bx\in[f\le0]$ or $\de=+\infty$.
Let $|\breve\nabla f|\in|\mathfrak{D}f|$.
The function $f$ admits a $\tau-$error bound at $\bx$ with $\delta':=\frac{\de}{1+\al}$ and $\mu$,
provided that
$\al|\breve\nabla f|(u)\ge\tau$
for all $u\in B_\delta(\bx)\cap[0<f<\mu]$ satisfying
\begin{align}\label{T2.2-2}
\max\{\al,1-\al\}f(u)<\tau d(u,[f\le0]).
\end{align}

\item
If $f$ admits a $\tau-$error bound at $\bx$ with $\delta$ and $\mu$, then
$|\nabla f|^\diamond(u)\ge\tau$ for all $u\in B_\de(\bx)\cap[0<f<\mu]$.
\item
Let $X$ be a normed space, and $f$ be convex.
If $f$ admits a $\tau-$error bound at $\bx$ with $\delta$ and $\mu$, then
$|\partial f|(u)\ge\tau$ for all $u\in B_\de(\bx)\cap[0<f<\mu]$.
\if{
Moreover, if
$\tau d(u,[f\le0])=f(u)$ for all
$u\in B_\de(\bx)\cap[0<f<\mu]$, then $d(0,\partial f(u))=\tau$ for all $u\in B_\de(\bx)\cap[0<f<\mu]$.
}\fi
\end{enumerate}
\end{theorem}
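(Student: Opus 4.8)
The plan is to obtain all three assertions by applying the corresponding parts of the ``fixed $x$'' Proposition~\ref{T2.1} at each relevant point, so that Theorem~\ref{T2.2} becomes a localization of it. Assertions (ii) and (iii) I would dispatch immediately: if $f$ admits a $\tau-$error bound at $\bx$ with $\de$ and $\mu$, then by Definition~\ref{D0} the inequality \eqref{D0-1} holds at every $u\in B_\de(\bx)\cap[0<f<\mu]$, and each such $u$ lies in $[f>0]$; applying Proposition~\ref{T2.1}(ii) (respectively (iii), under convexity) at this $u$ yields $|\nabla f|^\diamond(u)\ge\tau$ (respectively $|\partial f|(u)\ge\tau$). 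All the content therefore sits in part~(i).

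For (i) I would fix an arbitrary $x\in B_{\delta'}(\bx)\cap[0<f<\mu]$, so $x\in[f>0]$ and $f(x)<\mu$, and prove that \eqref{D0-1} holds at $x$; since $x$ is arbitrary and the requirement ``$\bx\in[f\le0]$ or $\delta'=+\infty$'' is equivalent to the stated hypothesis ``$\bx\in[f\le0]$ or $\de=+\infty$'' (because $\delta'=\de/(1+\al)$), this delivers the $\tau-$error bound with $\delta'$ and $\mu$. Depending on whether $|\breve\nabla f|\in|\mathfrak{D}f|^\dag$ or $|\breve\nabla f|=|\sd^Ff|$ (the latter forcing $X$ Asplund), I would invoke part~(a) or part~(b) of Proposition~\ref{T2.1}(i) at $x$, taking the threshold in (b) to be the present $\mu$, which exceeds $f(x)$. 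In both cases the task reduces to showing that any $u$ meeting the Proposition's restrictions — namely \eqref{T2.1-2}--\eqref{T2.1-5} in case (a), or $f(u)<\mu$ with \eqref{T2.1-3}--\eqref{T2.1-5} in case (b) — belongs to $B_\de(\bx)\cap[0<f<\mu]$ and satisfies \eqref{T2.2-2}, so that the theorem's hypothesis supplies $\al|\breve\nabla f|(u)\ge\tau$.

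The verification breaks into three checks. First, $\al\le1$ and \eqref{T2.1-3} give $d(u,x)<d(x,[f\le0])$, hence $u\notin[f\le0]$ and $f(u)>0$; combined with $f(u)\le f(x)<\mu$ (case (a)) or $f(u)<\mu$ (case (b)) this puts $u$ in $[0<f<\mu]$. Second, for the ball: if $\de=+\infty$ then $B_\de(\bx)=X$ and there is nothing to check, while otherwise $\bx\in[f\le0]$, so $d(x,[f\le0])\le d(x,\bx)<\delta'$, and \eqref{T2.1-3} with the triangle inequality give $d(u,\bx)\le d(u,x)+d(x,\bx)<\al\delta'+\delta'=(1+\al)\delta'=\de$; this is exactly why the radius must contract to $\delta'=\de/(1+\al)$. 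Third, for \eqref{T2.2-2}, the $\al$-half $\al f(u)<\tau d(u,[f\le0])$ is just \eqref{T2.1-4}, whereas the $(1-\al)$-half follows from $d(u,[f\le0])\ge d(x,[f\le0])-d(u,x)>(1-\al)d(x,[f\le0])$ together with \eqref{T2.1-5}, giving $\tau d(u,[f\le0])>(1-\al)\tau d(x,[f\le0])>(1-\al)f(u)$; jointly they yield $\max\{\al,1-\al\}f(u)<\tau d(u,[f\le0])$.

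I expect the main obstacle to be precisely this last check: producing the \emph{full} condition \eqref{T2.2-2} rather than just its $\al$-part. The $\al$-part is handed over by \eqref{T2.1-4}, but the $(1-\al)$-part is genuinely needed when $\al<\tfrac12$ and is unavailable directly; it emerges only by coupling the lower estimate of $d(u,[f\le0])$ from \eqref{T2.1-3} with the auxiliary inequality \eqref{T2.1-5}. This is the step at which the apparently redundant restriction \eqref{T2.1-5} of Proposition~\ref{T2.1} earns its keep, converting the fixed-$x$ data into the single uniform condition of the theorem; the remaining ball bookkeeping and the reductions in (ii) and (iii) are routine.
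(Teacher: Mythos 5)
Your proposal is correct and follows essentially the same route as the paper: both reduce Theorem~\ref{T2.2}(i) to Proposition~\ref{T2.1}(i) at an arbitrary $x\in B_{\de'}(\bx)\cap[0<f<\mu]$, verify via \eqref{T2.1-3} and the triangle inequality that admissible $u$ land in $B_\de(\bx)\cap[0<f<\mu]$ (which is exactly where $\de'=\de/(1+\al)$ comes from), and assemble \eqref{T2.2-2} by pairing \eqref{T2.1-4} with the estimate $d(u,[f\le0])>(1-\al)d(x,[f\le0])$ combined with \eqref{T2.1-5}, while (ii) and (iii) follow pointwise from Proposition~\ref{T2.1}(ii),(iii). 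Your closing observation about where \eqref{T2.1-5} ``earns its keep'' matches the paper's own emphasis on the role of that condition.
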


\if{
\NDC{26.7.20
The `moreover' part in item (iii) is motivated by \cite[Propsition~4.2]{CorMot08}.}

\AK{27.7.20.
I am not sure if this is important.}
\begin{proof}
(iii)
\red{
Let $u\in B_\de(\bx)\cap [0<f<\mu]$.
Under the assumption made, we have
$\tau d(u,[f\le0])=f(u)$.
Then $d(0,\partial f(u))=|\nabla f|(u)\ge\tau$ thanks to  Proposition~\ref{T2.1}(ii) and the convexity of $f$.
Take an arbitrarily $u\in B_\de(\bx)$.
If $f(u)\ge f(u)$,  then $\frac{[f(x)-f(u)]_+}{d(x,u)}=0$.
If $f(u)<f(x)$, then
\begin{align*}
\dfrac{f(x)-f(u)}{d(x,u)}
=\tau.\dfrac{d(x,[f\le 0])-d(u,[f\le 0])}{d(x,u)}\le\tau,
\end{align*}
thanks to the 1-Lipschitz property of the distance function.
Hence, it always holds that $|\nabla f|(x)\le\tau$, and consequently, $|\nabla f|(x)=d(0,\partial f(x))=\tau$.
The proof is complete.
}
\end{proof}
}\fi

\begin{proof}
To prove assertion (i),
it suffices to check that, if $x\in B_{\de'}(\bx)\cap[0<f<\mu]$ and $u\in X$ satisfies conditions \eqref{T2.1-3}--\eqref{T2.1-5} and $f(u)<\mu$ (in particular if it satisfies \eqref{T2.1-2}), then $u\in B_\delta(\bx)\cap[0<f<\mu]$, and it satisfies conditions \eqref{T2.2-2}.
Indeed, we have $u\in[f<\mu]$.
In view of \eqref{T2.1-3}, $u\in[f>0]$ and $d(u,x)<\al d(x,\bx)$.
The last inequality together with $x\in B_{\de'}(\bx)$ yield $u\in B_{\de}(\bx)$.
Condition \eqref{T2.1-3} obviously implies $d(u,[f\le0])>(1-\al)d(x,[f\le0])$, and it follows from \eqref{T2.1-5} that $(1-\al)f(u)<\tau d(u,[f\le0])$.
Together with \eqref{T2.1-4}, this gives \eqref{T2.2-2}.
Assertions (ii) and (iii) follow immediately from the corresponding assertions in Proposition~\ref{T2.1}.
\end{proof}

\begin{remark}\label{R2.1-}
\begin{enumerate}
\item
In view of the definition of $|\mathfrak{D}f|$,
Theorem~\ref{T2.2}(i) combines four separate primal and dual sufficient error bound conditions corresponding to $|\breve\nabla f|$ equal to $|\nabla f|^\diamond$, $|\nabla f|$, $|\sd^Cf|$ or $|\sd^Ff|$ (in appropriate spaces).
\item
The parameter $\al$ in Theorem~\ref{T2.2}(i) determines a trade-off between the main inequality $\al|\breve\nabla f|(u)\ge\tau$ (and also inequality \eqref{T2.2-2}) and the radius $\de'$ of the \nbh\ of $\bx$ in which the error bound estimate holds; cf. Remark~\ref{R2.1--}(iii).
In the conventional case $\al=1$, we have $\de'=\de/2$ as it has been observed in numerous publications.

\item
Weakening or dropping any of the restrictions on $u$ produces new (stronger!) sufficient conditions; cf. Remark~\ref{R2.1--}(iv).
This can be particularly relevant in the case of inequality \eqref{T2.2-2} involving the distance to the unknown set $[f\le0]$.
If $\bx\in[f\le0]$,
it is
common to replace this distance with $d(x,\bx)$.

\item
Sufficient error bound conditions of the type in Theorem~\ref{T2.2}(i) with the weaker inequality $\al f(u)<\tau d(u,[f\le0])$ in place of \eqref{T2.2-2} can be found in the literature (cf., e.g., \cite[Theorem 3.4]{KruLopYanZhu19}).
The fact that this inequality can be strengthened by replacing $\al$ with $\max\{\al,1-\al\}$ seems to be observed for the first time here.
It is a consequence of condition \eqref{T2.1-5} in Proposition~\ref{T2.1}.

\item
Under the conditions of part (iii) of Theorem~\ref{T2.2},
one can easily show that,
if for all ${x\in B_\de(\bx)}\cap[0<f<\mu]$ condition \eqref{D0-1}
holds as equality, then
$|\partial f|(x)=\tau$ for all ${x\in B_\de(\bx)}\cap[0<f<\mu]$.
\end{enumerate}
\end{remark}

The 
local
$\tau-$error bound
conditions are collected in the next three corollaries.

\begin{corollary}\label{C2.1}
Suppose $X$ is a complete metric space, $f:X \rightarrow \mathbb{R}_\infty$ is \lsc, and $\tau>0$.
Let $|\breve\nabla f|\in|\mathfrak{D}f|$.
The function $f$ admits a local $\tau-$error bound at $\bx\in[f\le0]$, provided that
$|\breve\nabla f|(x)\ge\tau$
for all $x\in[f>0]$ near $\bx$ with $f(x)$ near $0$.
Moreover, if ${|\breve\nabla f|=|\nabla f|^\diamond}$, then the above condition is also necessary.
\end{corollary}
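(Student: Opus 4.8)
The plan is to derive this corollary directly from Theorem~\ref{T2.2}, which is exactly the sort of specialization the theorem was built to accommodate. First I would recognize that the ``local $\tau$-error bound at $\bx\in[f\le0]$'' is precisely the case $\bx\in[f\le0]$ with a finite radius $\de$ in Definition~\ref{D0}. So the task reduces to extracting from the hypothesis ``$|\breve\nabla f|(x)\ge\tau$ for all $x\in[f>0]$ near $\bx$ with $f(x)$ near $0$'' the conditions needed to invoke Theorem~\ref{T2.2}(i). The natural choice is to take $\al=1$, which collapses inequality \eqref{T2.2-2} into the single requirement $\max\{1,0\}f(u)=f(u)<\tau d(u,[f\le0])$ and sets $\de'=\de/2$. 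The phrase ``near $\bx$ with $f(x)$ near $0$'' should be read as: there exist $\de>0$ and $\mu>0$ such that the slope inequality holds on $B_\de(\bx)\cap[0<f<\mu]$. With this reading, the hypothesis of Theorem~\ref{T2.2}(i) is satisfied (the extra constraint \eqref{T2.2-2} only \emph{shrinks} the set of points $u$ on which the slope bound must be verified), so $f$ admits a $\tau$-error bound at $\bx$ with $\de'=\de/2$ and $\mu$, which is by definition a local $\tau$-error bound. This handles the sufficiency part for every admissible choice of $|\breve\nabla f|\in|\mathfrak{D}f|$ simultaneously.

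For the ``moreover'' (necessity) part, when $|\breve\nabla f|=|\nabla f|^\diamond$ I would appeal to Theorem~\ref{T2.2}(ii). If $f$ admits a local $\tau$-error bound at $\bx$ with some $\de$ and $\mu$, then part (ii) yields $|\nabla f|^\diamond(u)\ge\tau$ for all $u\in B_\de(\bx)\cap[0<f<\mu]$, which is exactly the stated condition (``for all $x\in[f>0]$ near $\bx$ with $f(x)$ near $0$''). Thus sufficiency and necessity coincide for the nonlocal slope $|\nabla f|^\diamond$, giving a genuine characterization.

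The only genuine subtlety, and the step I would be most careful about, is the translation between the informal quantifier ``near $\bx$ with $f(x)$ near $0$'' in the corollary and the explicit constants $\de,\mu$ demanded by Theorem~\ref{T2.2}. I would make this precise by fixing $\de,\mu$ witnessing the hypothesis and then producing the error bound on the smaller ball $B_{\de/2}(\bx)$; conversely, the error bound's own witnesses $\de,\mu$ supply the neighbourhood in the necessity direction. No new estimates are required beyond those already established, so the proof is short: it is essentially the observation that setting $\al=1$ in Theorem~\ref{T2.2} and restricting to $\bx\in[f\le0]$ produces exactly Corollary~\ref{C2.1}, with the equivalence for $|\nabla f|^\diamond$ following from the matching sufficient and necessary assertions of that theorem.
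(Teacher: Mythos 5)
Your proposal is correct and follows exactly the route the paper intends: Corollary~\ref{C2.1} is stated as an immediate specialization of Theorem~\ref{T2.2}, obtained by taking $\al=1$ (so that $\de'=\de/2$ and condition \eqref{T2.2-2} becomes $f(u)<\tau d(u,[f\le0])$, which only shrinks the set of points where the slope bound is tested) for sufficiency, and by invoking Theorem~\ref{T2.2}(ii) for the necessity of the nonlocal slope condition. Your careful translation between the informal quantifier ``near $\bx$ with $f(x)$ near $0$'' and explicit witnesses $\de,\mu$ is precisely the right reading, and no further argument is needed.
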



\begin{corollary}
Suppose $X$ is a Banach space, $f:X \rightarrow \mathbb{R}_\infty$ is convex \lsc,  and $\tau>0$.
Let $|\breve\nabla f|\in|\mathfrak{D}f|$.
The function $f$ admits a local $\tau-$error bound at $\bx\in[f\le0]$ if and only if
$|\breve\nabla f|(x)\ge\tau$
for all $x\in[f>0]$ near $\bx$.
\end{corollary}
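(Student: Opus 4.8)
The plan is to exploit the fact that, since $f$ is convex, all the slope operators collected in $|\mathfrak{D}f|$ coincide on $[f>0]$. Indeed, by Lemma~\ref{P1.1}(iii), for $x\in[f>0]$ we have $|\nabla f|^\diamond(x)=|\nabla f|(x)=|\partial f|(x)$, and for a convex function the \Fr, Clarke and convex subdifferentials coincide, so $|\sd^Ff|(x)=|\sd^Cf|(x)=|\sd f|(x)=|\partial f|(x)$; points with $f(x)=+\infty$ are covered by the convention that all these quantities equal $+\infty$. Hence it suffices to establish the equivalence for a single representative, say $|\nabla f|^\diamond$, and the statement for an arbitrary $|\breve\nabla f|\in|\mathfrak{D}f|$ will follow. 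The sufficiency is then immediate: the assumed inequality $|\breve\nabla f|(x)\ge\tau$ for all $x\in[f>0]$ near $\bx$ is \emph{stronger} than the hypothesis of Corollary~\ref{C2.1} (which only requires it for $x$ with $f(x)$ additionally near $0$), so Corollary~\ref{C2.1} directly yields a local $\tau-$error bound at $\bx$.

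For the necessity I would start from a local $\tau-$error bound, i.e.\ finite $\de$ and $\mu\in]0,+\infty]$ with $\tau d(x,[f\le0])\le f(x)$ for all $x\in B_\de(\bx)\cap[0<f<\mu]$. The genuine content, and the main obstacle, is to upgrade this estimate from the sublevel region $[0<f<\mu]$ to \emph{all} of $[f>0]$ near $\bx$; this is exactly where convexity enters and where the `$f(x)$ near $0$' qualifier present in Corollary~\ref{C2.1} gets removed. The key observation is that on a small enough ball the constraint $f<\mu$ becomes harmless: set $\de':=\de$ if $\mu=+\infty$ and $\de':=\min\{\de,\mu/\tau\}$ otherwise. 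For $x\in B_{\de'}(\bx)$ with $\mu\le f(x)<+\infty$ one has, using $\bx\in[f\le0]$,
\begin{align*}
\tau d(x,[f\le0])\le\tau d(x,\bx)<\tau\de'\le\mu\le f(x),
\end{align*}
so the error bound inequality in fact holds throughout $B_{\de'}(\bx)\cap[0<f<+\infty]$.

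Finally I would invoke the elementary lower bound for the nonlocal slope: for $x\in[f>0]$ with $f(x)<+\infty$, testing the supremum in the definition of $|\nabla f|^\diamond(x)$ against competitor points $v\in[f\le0]$ (where $f_+(v)=0$) gives
\begin{align*}
|\nabla f|^\diamond(x)\ge\sup_{v\in[f\le0]}\frac{f(x)}{d(v,x)}=\frac{f(x)}{d(x,[f\le0])}\ge\tau,
\end{align*}
the last inequality being the extended error bound just established; the case $f(x)=+\infty$ is trivial since then $|\nabla f|^\diamond(x)=+\infty$. Thus $|\nabla f|^\diamond(x)\ge\tau$, and hence $|\breve\nabla f|(x)\ge\tau$, for all $x\in B_{\de'}(\bx)\cap[f>0]$, which is the required condition. (Alternatively, the subdifferential inequality on $[0<f<\mu]$ could be quoted from Theorem~\ref{T2.2}(iii), but the self-contained nonlocal-slope estimate avoids even that.) The only subtle point is the necessity's extension step; everything else is a direct appeal to the already established results and the convexity-driven coincidence of slopes.
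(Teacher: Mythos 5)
Your proof is correct and follows the paper's intended derivation: the corollary is stated there without a separate proof, as a direct consequence of Corollary~\ref{C2.1} (equivalently, Theorem~\ref{T2.2}(i)--(iii)) combined with the coincidence of all the slopes in $|\mathfrak{D}f|$ for convex functions, via Lemma~\ref{P1.1}(i), (iii) and the reduction of the \Fr\ and Clarke subdifferentials to the convex one. Your explicit treatment of the necessity's extension step --- shrinking the radius to $\min\{\de,\mu/\tau\}$ so that the constraint $f<\mu$ becomes vacuous (note this part needs only $\bx\in[f\le0]$, not convexity, which enters solely through the coincidence of slopes), which is precisely how the qualifier `$f(x)$ near $0$' from Corollary~\ref{C2.1} disappears in the convex setting --- correctly supplies the one detail the paper leaves implicit.
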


\begin{corollary}
Suppose $X$ is a complete metric space, $f:X \rightarrow \mathbb{R}_\infty$ is \lsc, and $\bx\in[f\le0]$. Then
\begin{align*}
\Er f(\bx) =\liminf_{x\to\bx,\,f(x)\downarrow0}|\nabla f|^\diamond(x) \ge\liminf_{x\to\bx,\,f(x)\downarrow0}|\nabla f|(x).
\end{align*}
If $X$ is Banach (Asplund), then $|\nabla f|$ in the above inequality can be replaced with $|\partial^Cf|$ ($|\partial^Ff|$).
If $X$ is Banach and $f$ is convex, then
\begin{align*}
\Er f(\bx)
=\liminf_{x\to\bx,\,f(x)>0}|\partial f|(x).
\end{align*}
\end{corollary}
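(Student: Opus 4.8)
The plan is to read all the asserted (in)equalities off the local characterizations already in hand, reserving a genuine argument only for the Asplund line and for the passage from $f(x)>0$ to $f(x)\downarrow0$ in the convex case.

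First I would settle the opening equality $\Er f(\bx)=\liminf_{x\to\bx,\,f(x)\downarrow0}|\nabla f|^\diamond(x)$. Recall that, by the discussion following Definition~\ref{D0}, $\Er f(\bx)$ is the supremum of all $\tau>0$ for which $f$ admits a local $\tau-$error bound at $\bx$. Corollary~\ref{C2.1}, applied with $|\breve\nabla f|=|\nabla f|^\diamond$, says that $f$ admits such a bound precisely when $|\nabla f|^\diamond(x)\ge\tau$ for all $x\in[f>0]$ near $\bx$ with $f(x)$ near $0$ (sufficiency together with the ``moreover'' necessity clause). Since demanding $|\nabla f|^\diamond\ge\tau$ on a shrinking punctured sublevel neighbourhood of $\bx$ is exactly the statement $\liminf_{x\to\bx,\,f(x)\downarrow0}|\nabla f|^\diamond(x)\ge\tau$, taking the supremum over admissible $\tau$ delivers the equality. (One direction is in fact immediate from the pointwise bound $|\nabla f|^\diamond(x)\ge f(x)/d(x,[f\le0])$, obtained from \eqref{slopes} by restricting the supremum to $u\in[f\le0]$, which upon taking $\liminf$ gives $\liminf|\nabla f|^\diamond\ge\Er f(\bx)$ at once; the reverse is the $\EVP$-driven sufficiency packaged in Corollary~\ref{C2.1}.) The subsequent inequality $\ge\liminf_{x\to\bx,\,f(x)\downarrow0}|\nabla f|(x)$ is then just Lemma~\ref{P1.1}(i) passed to the $\liminf$.

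Next I would handle the two space-dependent replacements. In the Banach case Lemma~\ref{P1.1}(iv) gives the pointwise inequality $|\partial^Cf|(x)\le|\nabla f|(x)$ for lower semicontinuous $f$, so $\liminf|\partial^Cf|\le\liminf|\nabla f|$ and the replacement is immediate. The Asplund case is the one delicate point: Lemma~\ref{P1.1}(v) only supplies the \emph{fuzzy} estimate $|\nabla f|(x)\ge\liminf_{u\to x,\,f(u)\to f(x)}|\partial^Ff|(u)$, so I cannot insert a pointwise bound. I would argue with $\varepsilon$: writing $\Lambda^F:=\liminf_{x\to\bx,\,f(x)\downarrow0}|\partial^Ff|(x)$, fix $\varepsilon>0$ and choose $\delta_0,\mu_0>0$ with $|\partial^Ff|(u)\ge\Lambda^F-\varepsilon$ on $B_{\delta_0}(\bx)\cap[0<f<\mu_0]$. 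For $x\in B_{\delta_0/2}(\bx)$ with $0<f(x)<\mu_0$, every $u$ with $u\to x$ and $f(u)\to f(x)$ eventually lies in $B_{\delta_0}(\bx)\cap[0<f<\mu_0]$ (since $d(u,\bx)\le d(u,x)+d(x,\bx)<\delta_0$ and $f(u)\to f(x)\in\,]0,\mu_0[$), so the inner $\liminf$ is $\ge\Lambda^F-\varepsilon$ and hence $|\nabla f|(x)\ge\Lambda^F-\varepsilon$ there. Passing to the outer $\liminf$ and letting $\varepsilon\downarrow0$ yields $\liminf|\nabla f|\ge\Lambda^F$, as required.

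Finally I would establish the convex identity. For convex $f$ and $f(x)>0$, Lemma~\ref{P1.1}(iii) gives $|\partial f|(x)=|\nabla f|(x)=|\nabla f|^\diamond(x)$, whence $\liminf_{x\to\bx,\,f(x)>0}|\partial f|(x)=\liminf_{x\to\bx,\,f(x)>0}|\nabla f|^\diamond(x)$. It remains to replace $f(x)>0$ by $f(x)\downarrow0$ under the $\liminf$ of $|\nabla f|^\diamond$, the analogue of the ``straightforward'' second equality in \eqref{D0-2}: the inclusion of index sets gives ``$\ge$'' trivially, while for the reverse the bound $|\nabla f|^\diamond(x)\ge f(x)/d(x,[f\le0])\ge f(x)/d(x,\bx)$ shows that along any sequence $x_n\to\bx$ with $f(x_n)$ bounded away from $0$ the right-hand side tends to $+\infty$, so such sequences cannot lower the $\liminf$; the two $\liminf$s therefore coincide with $\liminf_{x\to\bx,\,f(x)\downarrow0}|\nabla f|^\diamond(x)=\Er f(\bx)$. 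The main obstacle throughout is precisely the Asplund line, where the fuzzy sum-rule heritage of Lemma~\ref{P1.1}(v) forces the $\varepsilon$-neighbourhood argument in place of a one-line pointwise comparison.
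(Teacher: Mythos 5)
Your proof is correct and takes essentially the same route the paper intends for this corollary, which it presents as an immediate consequence of the preceding results: the equality via Corollary~\ref{C2.1} with $|\breve\nabla f|=|\nabla f|^\diamond$ (equivalently, the elementary bound $|\nabla f|^\diamond(x)\ge f(x)/d(x,[f\le0])$ in one direction and the \EVP-based sufficiency in the other), the remaining inequalities via parts (i), (iv) and (v) of Lemma~\ref{P1.1}, and the convex identity via Lemma~\ref{P1.1}(iii) together with the observation that sequences with $f(x_n)$ bounded away from $0$ force the nonlocal slope to $+\infty$; your $\varepsilon$-argument is the standard way to pass the fuzzy Asplund estimate of Lemma~\ref{P1.1}(v) through the outer $\liminf$. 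The only cosmetic omission is the case where the Asplund $\liminf$ equals $+\infty$, which is handled by replacing the threshold by an arbitrary finite constant.
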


\begin{remark}
The limits
\begin{align}\label{R2.3-1}
\liminf_{x\to\bx,\,f(x)\downarrow0}|\nabla f|^\diamond(x),\quad \liminf_{x\to\bx,\,f(x)\downarrow0}|\nabla f|(x)\AND \liminf_{x\to\bx,\,f(x)\downarrow0}|\partial f|(x)
\end{align}
are referred to in \cite{FabHenKruOut10,FabHenKruOut12,Kru15} as, respectively, the \emph{strict outer}, \emph{uniform strict outer} and \emph{strict outer subdifferential} slopes of $f$ at $\bx$; cf. \emph{limiting} slopes \cite{Iof00,Iof17}.
\end{remark}


The error bound inequalities \eqref{D0-1}, \eqref{D1-1} and \eqref{D1-2} correspond to the sublevel set $[f\le0]$.
Definitions~\ref{D0} and \ref{D1} and the corresponding error bound conditions can be easily extended to the case of an arbitrary sublevel set $[f\le c]$ where $c\in\R$.
It suffices to replace $f$ in the definitions and statements with $f-c$ (with the corresponding small adjustment in the definition of the nonlocal slope).

The nonlocal slope $|\nabla f|^\diamond(x)$ in Proposition~\ref{T2.1}(ii) cannot in general be replaced with the local one unless $f$ is convex.
As observed in \cite[proof of Proposition~2.1]{AzeCor04}, this can be done if instead of the fixed error bound inequality \eqref{D0-1} one considers a family of perturbed ones.

\begin{proposition}\label{P2.1}
Suppose $X$ is a metric space, $f:X \rightarrow \mathbb{R}_\infty$, $x\in [f>0]$, and $\tau>0$.
If
\begin{align}\label{P2.1-1}
\tau d(x,[f \le c])\le f(x)-c
\end{align}
for all sufficiently large $c<f(x)$, then
$|\nabla f|(x)\ge\tau$.
\end{proposition}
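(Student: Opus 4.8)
The plan is to work directly from the definition of the local slope in \eqref{slopes} and exhibit an explicit sequence $u_n\to x$ along which the difference quotient $[f(x)-f(u_n)]_+/d(u_n,x)$ approaches $\tau$ from below. First I would dispose of the trivial case $f(x)=+\infty$: then $x\notin\dom f$, so by convention $|\nabla f|(x)=+\infty\ge\tau$ and there is nothing to prove. Hence I may assume $0<f(x)<+\infty$. I would also note that \eqref{P2.1-1} forces $[f\le c]\ne\es$ for all $c$ close enough to $f(x)$ from below, since otherwise $d(x,[f\le c])=+\infty$ and \eqref{P2.1-1} fails; thus the sublevel sets entering the argument are nonempty.

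Next I would choose an increasing sequence $c_n\uparrow f(x)$ with $c_n<f(x)$ for which \eqref{P2.1-1} holds, so that $d(x,[f\le c_n])\le(f(x)-c_n)/\tau$. By the definition of the distance as an infimum, for each $n$ I can select $u_n\in[f\le c_n]$ whose distance to $x$ exceeds $d(x,[f\le c_n])$ only by an arbitrarily small slack; the decisive choice is to take this slack proportional to $f(x)-c_n$, say $\eta_n:=(f(x)-c_n)/n$, giving $d(u_n,x)<(f(x)-c_n)(\tau^{-1}+n^{-1})$. Since $f(u_n)\le c_n<f(x)$ we have $x\notin[f\le c_n]$, so $u_n\ne x$ and $d(u_n,x)>0$, while $f(x)-f(u_n)\ge f(x)-c_n>0$.

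I would then combine the two estimates:
\begin{align*}
\frac{[f(x)-f(u_n)]_+}{d(u_n,x)} \ge \frac{f(x)-c_n}{(f(x)-c_n)(\tau^{-1}+n^{-1})} = \frac{\tau n}{n+\tau}.
\end{align*}
Because $(f(x)-c_n)(\tau^{-1}+n^{-1})\to0$, we have $u_n\to x$, and the right-hand side tends to $\tau$. As the local slope is the $\limsup$ of the difference quotient over all $u\to x$ with $u\ne x$, it dominates the limit inferior along this particular sequence, whence $|\nabla f|(x)\ge\tau$.

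The only delicate point — and the heart of the argument — is the simultaneous control required in the choice of $u_n$: to force $u_n\to x$ I must let $c_n\uparrow f(x)$ so that the admissible distance $(f(x)-c_n)/\tau$ shrinks to $0$, but to keep the difference quotient close to $\tau$ I must ensure the extra slack $\eta_n$ in selecting the near-minimizer is negligible compared with $f(x)-c_n$. Tying $\eta_n$ to $f(x)-c_n$ (rather than to a fixed tolerance) achieves both at once, and it also handles uniformly the degenerate situation $d(x,[f\le c_n])=0$, in which the quotient simply blows up. This is exactly the mechanism alluded to in the remark preceding the statement (cf. \cite[proof of Proposition~2.1]{AzeCor04}): passing from a single error bound inequality to a family of perturbed ones is what allows the nonlocal slope $|\nabla f|^\diamond$ to be replaced by the local slope $|\nabla f|$.
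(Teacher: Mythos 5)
Your proof is correct and takes essentially the approach the paper itself relies on: Proposition~\ref{P2.1} is stated there without a written proof, with a pointer to \cite[proof of Proposition~2.1]{AzeCor04}, and your construction --- choosing near-minimizers $u_n\in[f\le c_n]$ with slack $\eta_n$ proportional to $f(x)-c_n$, so that $d(u_n,x)<(f(x)-c_n)\left(\tau^{-1}+n^{-1}\right)\to0$ while the difference quotient stays above $\tau n/(n+\tau)$ --- is exactly that standard mechanism for upgrading the nonlocal slope to the local one. Your handling of the side cases ($f(x)=+\infty$ via the convention $|\nabla f|(x)=+\infty$, nonemptiness of $[f\le c_n]$, and the degenerate case $d(x,[f\le c_n])=0$) is also sound, so nothing is missing.
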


Theorem~\ref{T2.2} and Proposition~\ref{P2.1} yield the following statement for `perturbed' error bounds
extending \cite[Theorem~2.1]{AzeCor04},
\cite[Theorem~2.3]{CorMot08} and \cite[Theorem~3.2]{AzeCor17}.

\begin{proposition}\label{P2.2}
Suppose $X$ is a metric space, $f:X \rightarrow \mathbb{R}_\infty$, $\bx\in X$, $\tau>0$, ${\de\in]0,+\infty]}$ and $\mu\in]0,+\infty]$.
\begin{enumerate}
\item
Let $X$ be complete, $f$ be lower semicontinuous, $\al\in]0,1]$, and either $\bx\in[f\le0]$ or $\de=+\infty$.
Let $|\breve\nabla f|\in|\mathfrak{D}f|^\circ$.
The perturbed error bound inequality \eqref{P2.1-1} holds
for all $c\in[0,\mu[$
{and}
$x\in B_{\frac{\de}{1+\al}}(\bx)\cap[c<f<\mu]$
provided that
$\al|\breve\nabla f|(x)\ge\tau$
for all $x\in B_\delta(\bx)\cap[0<f<\mu]$.
\item
If the perturbed error bound inequality \eqref{P2.1-1}
holds for all $c\in[0,\mu[$ and $x\in B_\de(\bx)\cap[{c<f<\mu}]$, then $|\nabla f|(x)\ge\tau$
for all $x\in B_\delta(\bx)\cap[0<f<\mu]$.

\end{enumerate}
\end{proposition}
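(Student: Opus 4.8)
The plan is to read \eqref{P2.1-1}, for each fixed $c$, as the ordinary error bound inequality \eqref{D0-1} written for the vertically shifted function $g:=f-c$, whose zero sublevel set $[g\le0]$ coincides with $[f\le c]$. Once this is recognised, part (i) reduces to a single application of Theorem~\ref{T2.2}(i) to $g$, and part (ii) to a single application of Proposition~\ref{P2.1}; the only genuinely substantive ingredient is the behaviour of the slope operators under the shift $f\mapsto f-c$.

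For part (i), I would fix $c\in[0,\mu[$ and apply Theorem~\ref{T2.2}(i) to $g=f-c$. The decisive observation -- and precisely the reason the statement is restricted to the truncated collection $|\mathfrak{D}f|^\circ$ rather than the full $|\mathfrak{D}f|$ -- is that every operator in $|\mathfrak{D}f|^\circ$, namely $|\nabla f|$, $|\sd^Cf|$ and $|\sd^Ff|$, is invariant under adding a constant: the local slope depends only on the differences $f(x)-f(u)$, and the \Fr\ and Clarke subdifferentials (hence their subdifferential slopes) are unchanged by a vertical shift. By contrast, the nonlocal slope $|\nabla f|^\diamond$ is built from $f_+$ and is \emph{not} shift-invariant, which is exactly why it is excluded here. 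Consequently $|\breve\nabla g|=|\breve\nabla f|$, and since $[0<g<\mu-c]=[c<f<\mu]\subseteq[0<f<\mu]$, the hypothesis $\al|\breve\nabla f|(x)\ge\tau$ on $B_\de(\bx)\cap[0<f<\mu]$ transfers to $g$ on $B_\de(\bx)\cap[0<g<\mu-c]$ (indeed in a form even stronger than Theorem~\ref{T2.2}(i) demands, since it holds without the extra restriction \eqref{T2.2-2}). The side condition also transfers: if $\bx\in[f\le0]$ then $g(\bx)=f(\bx)-c\le0$, and otherwise $\de=+\infty$. Theorem~\ref{T2.2}(i) then yields a $\tau-$error bound for $g$ with $\frac{\de}{1+\al}$ and $\mu-c$, which rewritten for $f$ is exactly \eqref{P2.1-1} on $B_{\frac{\de}{1+\al}}(\bx)\cap[c<f<\mu]$. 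As $c\in[0,\mu[$ was arbitrary, part (i) follows.

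For part (ii), I would fix $x\in B_\de(\bx)\cap[0<f<\mu]$ and check the hypothesis of Proposition~\ref{P2.1}. Since $0<f(x)<\mu$ and $x\in B_\de(\bx)$, for every $c\in[0,f(x)[$ one has $x\in B_\de(\bx)\cap[c<f<\mu]$, so the assumed \eqref{P2.1-1} applies and gives $\tau d(x,[f\le c])\le f(x)-c$. In particular this holds for all $c<f(x)$ sufficiently close to $f(x)$ (such $c$ being automatically nonnegative because $f(x)>0$), so Proposition~\ref{P2.1} yields $|\nabla f|(x)\ge\tau$. Since $x$ was arbitrary, the inequality holds throughout $B_\de(\bx)\cap[0<f<\mu]$.

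Beyond routine bookkeeping, the single substantive point is the shift-invariance of the operators in $|\mathfrak{D}f|^\circ$; with that in hand, part (i) is a mechanical reduction to Theorem~\ref{T2.2} and part (ii) a mechanical reduction to Proposition~\ref{P2.1}. I anticipate no real obstacle other than correctly propagating the parameters $\de$, $\mu$ and $\al$ through the substitution $f\mapsto f-c$.
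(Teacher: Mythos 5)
Your proof is correct and takes essentially the same route as the paper, which states Proposition~\ref{P2.2} as a direct consequence of Theorem~\ref{T2.2} and Proposition~\ref{P2.1} -- exactly the reduction you carry out, via the shift $g:=f-c$ for part (i) and the pointwise application of Proposition~\ref{P2.1} for part (ii). Your explicit justification that the operators in $|\mathfrak{D}f|^\circ$ are invariant under $f\mapsto f-c$ while $|\nabla f|^\diamond$ (built from $f_+$) is not, which explains the restriction to the truncated collection, correctly fills in the one detail the paper leaves implicit.
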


\begin{remark}\label{R2.3}
\begin{enumerate}
\item
Proposition~\ref{P2.2} provides necessary and sufficient conditions for the following \emph{perturbed $\tau-$error bound} property of $f$ at $\bx\in X$ with some $\delta\in]0,+\infty]$ and $\mu\in]0,+\infty]$ such that either $\bx\in[f\le0]$ or $\de=+\infty$: inequality \eqref{P2.1-1}
holds for all $c\in[0,\mu[$ and $x\in B_\de(\bx)\cap[c<f<\mu]$.
\item
Thanks to Corollary~\ref{C2.1} and Proposition~\ref{P2.2}, conditions
\begin{itemize}
\item
$|\nabla f|^\diamond(x)\ge\tau$
for all $x\in[f>0]$ near $\bx$ with $f(x)$ near $0$, and
\item
$|\nabla f|(x)\ge\tau$
for all $x\in[f>0]$ near $\bx$ with $f(x)$ near $0$
\end{itemize}
provide full characterizations of, respectively, the $\tau-$error bound and the perturbed $\tau-$error bound properties of $f$ at $\bx\in[f\le0]$.
In view of Lemma~\ref{P1.1}(iii),
in the convex case the perturbed $\tau-$error bounds are equivalent to the conventional ones.
\item
The perturbed $\tau-$error bound property of $f$ at $\bx\in[f\le0]$ is actually the $\tau-$\emph{metric regularity} of the (truncated) \emph{epigraphical} \SVM\ $x\mapsto\epi f(x):=\{{c\in[0,+\infty[}\,\mid f(x)\le c\}$ at $(\bx,0)$; cf. \cite[Remark~3.2]{AzeCor17}.
\end{enumerate}
\end{remark}

\section{Nonlinear error bound conditions} \label{S4}

In view of Remark~\ref{R1.1}(ii), one can easily deduce from Theorem~\ref{T2.2} and Proposition~\ref{T2.1} sufficient and necessary conditions for nonlinear error bounds.
The sufficient conditions become meaningful when $\varphi\in\mathcal{C}^1$ as in this case one can employ the chain rules in Lemma~\ref{L2}.

\begin{theorem}\label{T2.4}
Suppose $X$ is a metric space, $f:X \rightarrow \mathbb{R}_\infty$, $\bx\in X$, ${\varphi\in\mathcal{C}^1}$, $\de\in]0,+\infty]$ and $\mu\in]0,+\infty]$.
\begin{enumerate}
\item
Let $X$ be complete, $f$ be lower semicontinuous, $\al\in]0,1]$, and either $\bx\in[f\le0]$ or $\de=+\infty$.
The function $f$ admits a $\varphi-$error bound at $\bx$ with $\delta':=\frac{\de}{1+\al}$ and $\mu$,
provided that one of the following conditions is satisfied:
\begin{enumerate}
\item
$\al|\nabla(\varphi\circ f)|^\diamond(u)\ge1$
for all $u\in B_\delta(\bx)\cap[0<f<\mu]$ satisfying
\begin{align}\label{T2.4-1}
\max\{\al,1-\al\}\varphi(f(u))<d(u,[f\le0]);
\end{align}
\item
$|\breve\nabla f|\in|\mathfrak{D}f|^\circ$ and $\al\varphi'(f(u))|\breve\nabla f|(u)\ge1$
for all $u\in B_\delta(\bx)\cap[0<f<\mu]$ satisfying condition~\eqref{T2.4-1}.
\end{enumerate}
If $\varphi'$ is nonincreasing, then $|\mathfrak{D}f|^\circ$ in {\rm (b)} can be replaced with $|\mathfrak{D}f|$.

\item
If $f$ admits a $\varphi-$error bound at $\bx$ with $\delta$ and $\mu$, then
$|{\nabla(\varphi\circ f)}|^\diamond(u)\ge1$ for all ${u\in B_\de(\bx)}\cap[0<f<\mu]$.
\item
Let $X$ be a normed space, and $f$ be convex.
If $f$ admits a $\varphi-$error bound at $\bx$ with $\delta$ and $\mu$, then
$\frac{\varphi(f(u))}{f(u)}|\partial f|(u)\ge1$ for all $u\in B_\de(\bx)\cap[0<f<\mu]$.\\
If, moreover, $\varphi'$ is nondecreasing, particularly if $\varphi$ is convex, then
$\varphi'(f(u))|\partial f|(u)\ge1$ for all $u\in B_\de(\bx)\cap[0<f<\mu]$.
\end{enumerate}
\end{theorem}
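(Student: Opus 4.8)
The plan is to reduce everything to the already-proved linear statements via the change-of-function device of Remark~\ref{R1.1}(ii). I extend $\varphi$ to all of $\R$ by $\varphi(t):=0$ for $t<0$ and put $g:=\varphi\circ f$. Since $\varphi$ is nondecreasing and, being continuous on $]0,+\infty[$ and vanishing on $]-\infty,0]$, lower semicontinuous, the composition $g$ is lower semicontinuous whenever $f$ is, and moreover $[g\le0]=[f\le0]$ and $[0<g<\varphi(\mu)]=[0<f<\mu]$ (with $\varphi(\mu)=+\infty$ when $\mu=+\infty$, as $\varphi\in\mathcal{C}^1$). By Remark~\ref{R1.1}(ii), $f$ admits a $\varphi$-error bound at $\bx$ with $\delta$ and $\mu$ if and only if $g$ admits a $1$-error bound (a $\tau$-error bound with $\tau=1$) at $\bx$ with $\delta$ and $\varphi(\mu)$. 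Thus I would simply apply Theorem~\ref{T2.2} and Proposition~\ref{T2.1} to $g$ with $\tau=1$, and translate the slope conditions back to $f$ through the chain rules of Lemma~\ref{L2}.

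For assertion (i) I apply Theorem~\ref{T2.2}(i) to $g$ with $\tau=1$. Constraint \eqref{T2.2-2} for $g$ becomes $\max\{\al,1-\al\}\varphi(f(u))<d(u,[f\le0])$, which is exactly \eqref{T2.4-1}, and the radius $\delta/(1+\al)$ matches. For (a) I take $|\breve\nabla g|=|\nabla g|^\diamond\in|\mathfrak{D}g|$, so $\al|\nabla(\varphi\circ f)|^\diamond(u)\ge1$ is precisely the slope hypothesis of Theorem~\ref{T2.2}(i). For (b) I use the equalities $|\nabla g|(u)=\varphi'(f(u))|\nabla f|(u)$, $|\sd^Fg|(u)=\varphi'(f(u))|\sd^Ff|(u)$ and $|\sd^Cg|(u)=\varphi'(f(u))|\sd^Cf|(u)$ from Lemma~\ref{L2}(i),(iii),(iv) (valid since $\varphi\in\mathcal{C}^1$ is strictly differentiable and $\varphi'(f(u))>0$ because $f(u)>0$); hence $\al\varphi'(f(u))|\breve\nabla f|(u)\ge1$ for a component of $|\mathfrak{D}f|^\circ$ is identical to $\al|\breve\nabla g|(u)\ge1$ for the matching component of $|\mathfrak{D}g|^\circ$. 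When $\varphi'$ is nonincreasing, Lemma~\ref{L2}(ii) supplies the one-sided bound $|\nabla g|^\diamond(u)\ge\varphi'(f(u))|\nabla f|^\diamond(u)$, so $\al\varphi'(f(u))|\nabla f|^\diamond(u)\ge1$ already forces $\al|\nabla g|^\diamond(u)\ge1$; this is exactly what permits enlarging $|\mathfrak{D}f|^\circ$ to $|\mathfrak{D}f|$ in (b). Assertion (ii) is then immediate: applying Theorem~\ref{T2.2}(ii) to $g$ gives $|\nabla g|^\diamond(u)=|\nabla(\varphi\circ f)|^\diamond(u)\ge1$ on $B_\de(\bx)\cap[0<f<\mu]$.

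The only genuinely different argument is assertion (iii), and this is where I expect the main obstacle: $g=\varphi\circ f$ need not be convex even when $f$ is, so Theorem~\ref{T2.2}(iii) cannot be applied to $g$. Instead I would argue pointwise with the fixed-$x$ Proposition~\ref{T2.1}(iii). Fix $u\in B_\de(\bx)\cap[0<f<\mu]$; the $\varphi$-error bound gives $d(u,[f\le0])\le\varphi(f(u))$, so the purely \emph{linear} inequality \eqref{D0-1}, namely $\tau_u\,d(u,[f\le0])\le f(u)$, holds at this single point with $\tau_u:=f(u)/\varphi(f(u))>0$. Since $u\in[f>0]$ and $f$ is convex, Proposition~\ref{T2.1}(iii) yields $|\partial f|(u)\ge\tau_u$, that is $\frac{\varphi(f(u))}{f(u)}|\partial f|(u)\ge1$, the first conclusion. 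For the refinement under $\varphi'$ nondecreasing I would invoke the elementary estimate $\varphi(t)\le t\,\varphi'(t)$ for $t>0$, which follows from $\varphi(0)=0$ and $\varphi(t)=\int_0^t\varphi'(s)\,ds\le t\,\varphi'(t)$ by monotonicity of $\varphi'$; this gives $\varphi'(f(u))\ge\varphi(f(u))/f(u)$, whence $\varphi'(f(u))|\partial f|(u)\ge1$. The convex case for $\varphi$ is subsumed, as convexity of $\varphi$ is precisely the monotonicity of $\varphi'$.
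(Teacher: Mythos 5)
Your proposal is correct and follows essentially the same route as the paper: assertions (i) and (ii) are obtained by applying Theorem~\ref{T2.2} with $\tau=1$ to the composition $\varphi\circ f$ and translating back via the chain rules of Lemma~\ref{L2} (including Lemma~\ref{L2}(ii) for the nonincreasing-$\varphi'$ extension to $|\mathfrak{D}f|$), while (iii) is proved pointwise by reducing \eqref{D1-1} to the linear inequality \eqref{D0-1} with $\tau_u:=f(u)/\varphi(f(u))$ and invoking Proposition~\ref{T2.1}(iii). The only cosmetic difference is that you justify $\varphi(t)\le t\varphi'(t)$ by the integral representation of $\varphi$, whereas the paper uses the mean value theorem to write $\tau_u^{-1}=\varphi'(\theta)$ for some $\theta\in\left]0,f(u)\right[$ -- the same elementary fact.
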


\begin{proof}
Assertions (i) and (ii)
are direct consequences of the corresponding assertions in Theorem~\ref{T2.2}, applied to the composition function $\varphi\circ f$ with $\tau=1$, and Lemma~\ref{L2}.
To prove the first part of assertion (iii), it suffices to notice that inequality \eqref{D0-1} reduces to inequality \eqref{D1-1} by setting
$\tau:=\frac{f(x)}{\varphi(f(x))}$
and apply Proposition~\ref{T2.1}(iii).
By the mean value theorem, $\tau\iv=\varphi'(\theta)$
for some $\theta\in]0,f(x)[$.
If $\varphi'$ is nondecreasing, then $\tau\iv\le\varphi'(f(x))$, which proves the second part.
\end{proof}

\begin{remark}
\begin{enumerate}
\item
In view of the definition of $|\mathfrak{D}f|^\circ$ (or $|\mathfrak{D}f|$), condition (b) in
Theorem~\ref{T2.4}(i) combines three (or four) separate primal and dual sufficient error bound conditions corresponding to $|\breve\nabla f|$ equal to $|\nabla f|^\diamond$, $|\nabla f|$, $|\sd^Cf|$ or $|\sd^Ff|$ (in appropriate spaces).
\item
With $|\breve\nabla f|=|\sd^Cf|$ or $|\breve\nabla f|=|\sd^Ff|$, inequality $\al\varphi'(f(u))|\breve\nabla f|(u)\ge1$ in (b) can be interpreted as the Kurdyka--{\L}ojasiewicz property (as defined, e.g., in \cite{AttBolRedSou10,BolDanLeyMaz10}).
\item
As in the linear case, parameter $\al$ in Theorem~\ref{T2.4}(i) determines a trade-off between the main inequalities $\al|\nabla(\varphi\circ f)|^\diamond(u)\ge1$ in (a) and $\al\varphi'(f(u))|\breve\nabla f|(u)\ge1$ in (b) and the radius $\de'$ of the \nbh\ of $\bx$ in which the error bound estimate holds.
In the conventional case $\al=1$, we have $\de'=\de/2$.
\item
Weakening or dropping any of the restrictions on $u$ produces new (stronger!) sufficient conditions.
This can be particularly relevant in the case of inequality \eqref{T2.4-1} involving the distance to the unknown set $[f\le0]$.
If $\bx\in[f\le0]$,
it is
common to replace this distance with $d(u,\bx)$.
\item
If the function $t\mapsto\frac{\varphi(t)}{t}$ is nondecreasing on $]0,+\infty[$ (particularly, if $\varphi$ is convex) then the assumption of differentiability of $\varphi$ in Theorem~\ref{T2.4} can be dropped.
As one can observe from the above proof, it suffices to replace $\varphi'(f(u))$ in condition (b) with $\frac{\varphi(f(u))}{f(u)}$.

\item
Under the conditions of part (iii) of Theorem~\ref{T2.4},
one can easily show that,
if for all $u\in B_\de(\bx)\cap[0<f<\mu]$ condition \eqref{D1-1}
holds as equality, then
$\varphi'(f(u))|\partial f|(u)=1$ for all $u\in B_\de(\bx)\cap[0<f<\mu]$; cf. Remark~\ref{R2.1-}(v).
\sloppy
\item
Employing Proposition~\ref{P2.2}, one can
expand Theorem~\ref{T2.4} to cover a \emph{perturbed $\varphi-$error bound} property of $f$ at $\bx\in[f\le0]$
as in \cite[Theorems~4.1 and 4.2]{AzeCor17}; cf. Remark~\ref{R2.3}.
\item
With $\de=+\infty$, $\mu<+\infty$, $\al=1$ and $\varphi(t):=\tau\iv t^q$ for some $\tau>0$ and $q>0$ and all $t>0$ (H\"older case), Theorem~\ref{T2.4}(i) with $|\breve\nabla f|=|\nabla f|$ in condition (b) recaptures \cite[Corollary~2.5]{NgaThe08},
while with $|\breve\nabla f|=|\sd^Ff|$ it recaptures \cite[Corollary~2(i)]{NgaThe09}.
\item
With $\bx\in\bd[f\le0]$, $\de<+\infty$, $\mu=+\infty$, $\al=1$, and $\varphi(t):=\tau\iv t^q$ for some $\tau>0$ and $q>0$ and all $t>0$ (H\"older case), Theorem~\ref{T2.4}(i) with $|\breve\nabla f|=|\sd^Ff|$ in condition (b) recaptures \cite[Corollary~2(ii)]{NgaThe09}.
\item
With $\bx\in[f\le0]$, $\mu=+\infty$, and $\varphi(t):=(\al\tau)\iv t^q$ for some $\tau>0$ and $q>0$ and all $t>0$ (H\"older case), part (i) of Theorem~\ref{T2.4} with $|\breve\nabla f|=|\sd^Cf|$ and $|\breve\nabla f|=|\sd^Ff|$ in condition (b) improves \cite[Theorem~3.7]{KruLopYanZhu19}, while part (iii) partially recaptures and extends \cite[Lemma~3.33]{KruLopYanZhu19}.
\end{enumerate}
\end{remark}

The
local
$\varphi-$error bound sufficient conditions are collected in the next three corollaries.

\begin{corollary}\label{C2.3}
Suppose $X$ is a complete metric space, $f:X \rightarrow \mathbb{R}_\infty$ is \lsc, and ${\varphi\in\mathcal{C}^1}$.
The function $f$ admits a local $\varphi-$error bound at $\bx\in[f\le0]$, provided that one of the following conditions is satisfied:
\begin{enumerate}
\item
$|\nabla(\varphi\circ f)|^\diamond(x)\ge1$
for all $x\in[f>0]$ near $\bx$ with $f(x)$ near $0$;

\item
$|\breve\nabla f|\in|\mathfrak{D}f|^\circ$ and $\varphi'(f(x))|\breve\nabla f|(x)\ge1$
for all $x\in[f>0]$ near $\bx$ with $f(x)$ near $0$.
\end{enumerate}
Condition {\rm (i)} is also necessary.\\
If $\varphi'$ is nonincreasing, then $|\mathfrak{D}f|^\circ$ in {\rm (ii)} can be replaced with $|\mathfrak{D}f|$.
\end{corollary}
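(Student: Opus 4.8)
The plan is to obtain the corollary as a direct specialization of Theorem~\ref{T2.4} to the conventional parameter value $\al=1$ and to the local regime $\bx\in[f\le0]$ with a finite radius. The only real work is quantifier bookkeeping: the phrase ``for all $x\in[f>0]$ near $\bx$ with $f(x)$ near $0$'' is to be read as the assertion that there exist $\de\in]0,+\infty[$ and $\mu\in]0,+\infty]$ such that the stated slope inequality holds for all $x\in B_\de(\bx)\cap[0<f<\mu]$. Once this reading is fixed, the proof reduces to matching hypotheses.

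For sufficiency I would fix such $\de$ and $\mu$, set $\al=1$ (so that $\max\{\al,1-\al\}=1$ and $\de':=\de/(1+\al)=\de/2$, which is still a positive finite radius), and invoke Theorem~\ref{T2.4}(i). The key observation is that the restriction \eqref{T2.4-1} present in that theorem only \emph{shrinks} the set of points $u$ over which the slope inequality must be checked; since conditions (i) and (ii) of the corollary are assumed to hold for \emph{every} $u\in B_\de(\bx)\cap[0<f<\mu]$, they hold a fortiori for those $u$ that additionally satisfy \eqref{T2.4-1}. Hence condition (a), respectively (b), of Theorem~\ref{T2.4}(i) is met with $\al=1$, and the theorem delivers a $\varphi-$error bound at $\bx$ with $\de/2$ and $\mu$; as $\bx\in[f\le0]$ and $\de/2<+\infty$, this is exactly a local $\varphi-$error bound. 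The final clause, that $|\mathfrak{D}f|^\circ$ may be enlarged to $|\mathfrak{D}f|$ when $\varphi'$ is nonincreasing, transfers verbatim from the corresponding statement at the end of Theorem~\ref{T2.4}(i).

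For the necessity of condition (i) I would argue in the reverse direction. Assuming $f$ admits a local $\varphi-$error bound at $\bx$, Definition~\ref{D1} furnishes $\de\in]0,+\infty[$ and $\mu\in]0,+\infty]$ witnessing \eqref{D1-1} on $B_\de(\bx)\cap[0<f<\mu]$. Applying Theorem~\ref{T2.4}(ii) with these same $\de$ and $\mu$ yields $|\nabla(\varphi\circ f)|^\diamond(u)\ge1$ for all $u\in B_\de(\bx)\cap[0<f<\mu]$, which is precisely condition (i) in the ``near $\bx$ with $f(x)$ near $0$'' formulation.

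I expect no genuine obstacle: the entire content is the quantifier translation above, the single point deserving care being the verification that dropping the auxiliary restriction \eqref{T2.4-1} \emph{strengthens} rather than weakens the hypotheses, so that the corollary's assumptions do imply those of Theorem~\ref{T2.4}(i) with $\al=1$.
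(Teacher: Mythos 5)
Your proof is correct and follows exactly the route the paper intends: Corollary~\ref{C2.3} is presented there (without explicit proof) as an immediate specialization of Theorem~\ref{T2.4} with $\al=1$, and your quantifier translation of ``near $\bx$ with $f(x)$ near $0$'', the observation that dropping restriction \eqref{T2.4-1} strengthens the hypotheses, and the use of Theorem~\ref{T2.4}(ii) for necessity are precisely the missing details.
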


\begin{corollary}\label{C2.4}
Suppose $X$ is a Banach space, $f:X \rightarrow \mathbb{R}_\infty$ is convex \lsc, $\bx\in[f\le0]$, and ${\varphi\in\mathcal{C}^1}$.
Consider the following conditions:
\begin{enumerate}
\item
$f$ admits a local $\varphi-$error bound at $\bx$;

\item
$|\nabla(\varphi\circ f)|^\diamond(x)\ge1$
for all $x\in[f>0]$ near $\bx$ with $f(x)$ near $0$;


\item
$\varphi'(f(x))|\partial f|(x)\ge1$ for all $x\in[f>0]$ near $\bx$ with $f(x)$ near $0$;

\item
$\frac{\varphi(f(x))}{f(x)}|\partial f|(x)\ge1$ for all $x\in[f>0]$ near $\bx$ with $f(x)$ near $0$.
\end{enumerate}
Then {\rm(iii) \folgt (ii) \folgt (i) \folgt (iv)}.
Moreover, if $\varphi'$ is nondecreasing, particularly if $\varphi$ is convex, then all the conditions are equivalent.
\end{corollary}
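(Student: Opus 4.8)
The plan is to assemble the four implications directly from the results already proved, observing that every condition in the corollary is the convex specialization of something established earlier. Before starting, I would record the slope identities available under convexity: by Lemma~\ref{P1.1}(iii), for convex $f$ and any $x$ with $f(x)>0$ one has $|\partial f|(x)=|\nabla f|(x)=|\nabla f|^\diamond(x)$, so the slope quantities in conditions (ii)--(iv) can all be routed through $|\partial f|(x)$.

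For the implication (iii) \folgt (ii), I would start from $\varphi'(f(x))|\partial f|(x)\ge1$ and push it through the chain rule. Since $\varphi\in\mathcal{C}^1$ is nondecreasing and differentiable at $f(x)>0$ with $\varphi'(f(x))>0$, Lemma~\ref{L2}(i) gives $|\nabla(\varphi\circ f)|(x)=\varphi'(f(x))|\nabla f|(x)=\varphi'(f(x))|\partial f|(x)$, the last equality by convexity. Because $\varphi(f(x))>0$, Lemma~\ref{P1.1}(i) applied to $\varphi\circ f$ yields $|\nabla(\varphi\circ f)|^\diamond(x)\ge|\nabla(\varphi\circ f)|(x)$. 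Chaining these gives $|\nabla(\varphi\circ f)|^\diamond(x)\ge\varphi'(f(x))|\partial f|(x)\ge1$, which is exactly (ii); the quantifier ``$x$ near $\bx$ with $f(x)$ near $0$'' is preserved at each step since the estimates are pointwise.

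The implications (ii) \folgt (i) and (i) \folgt (iv) I would obtain by direct citation. Condition (ii) is verbatim condition (i) of Corollary~\ref{C2.3} (whose hypotheses hold here, as a Banach space is a complete metric space and $f$ is \lsc), and its conclusion is that $f$ admits a local $\varphi$-error bound, i.e.\ (i). Conversely, Theorem~\ref{T2.4}(iii), applied to the convex \lsc\ function $f$, states that a local $\varphi$-error bound forces $\frac{\varphi(f(u))}{f(u)}|\partial f|(u)\ge1$ on the relevant neighbourhood of $\bx$, which is precisely (iv).

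For the \emph{moreover} part it remains to close the cycle with (iv) \folgt (iii) under the assumption that $\varphi'$ is nondecreasing. Here I would invoke the mean value theorem: since $\varphi(0)=0$, for each $x$ with $f(x)>0$ there is $\theta\in\,]0,f(x)[$ with $\frac{\varphi(f(x))}{f(x)}=\varphi'(\theta)\le\varphi'(f(x))$, the inequality using monotonicity of $\varphi'$. Hence (iv) gives $\varphi'(f(x))|\partial f|(x)\ge\frac{\varphi(f(x))}{f(x)}|\partial f|(x)\ge1$, which is (iii); convexity of $\varphi$ is the special case where $\varphi'$ is automatically nondecreasing, and this last implication could alternatively be read off the second part of Theorem~\ref{T2.4}(iii). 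The only step demanding genuine care is (iii) \folgt (ii), where one must chase the slope inequalities in the correct direction and exploit convexity to collapse the local slope into the nonlocal one; the remaining implications are direct appeals to Corollary~\ref{C2.3}, Theorem~\ref{T2.4}(iii), and an elementary mean value estimate.
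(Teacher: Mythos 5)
Your proof is correct and follows essentially the route the paper intends: Corollary~\ref{C2.4} is stated there without proof as a local specialization of Theorem~\ref{T2.4}, and your assembly -- (iii) \folgt (ii) via Lemma~\ref{L2}(i) together with Lemma~\ref{P1.1}(i),(iii), (ii) \folgt (i) by Corollary~\ref{C2.3}, (i) \folgt (iv) by Theorem~\ref{T2.4}(iii), and (iv) \folgt (iii) by the mean value theorem when $\varphi'$ is nondecreasing -- is exactly that derivation, including the same mean value argument the paper uses in proving Theorem~\ref{T2.4}(iii).
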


\begin{corollary}\label{C2.6--}
Suppose $X$ is a complete metric space, $f:X \rightarrow \mathbb{R}_\infty$ is \lsc,  ${\varphi\in\mathcal{C}^1}$, $\bx\in[f\le0]$, and $\Er_\varphi f(\bx)$ be defined by \eqref{D0-3}.
\begin{enumerate}
\item
The following estimate holds true:
\begin{gather*}
\Er_\varphi f(\bx)=
\liminf_{x\to\bx,\,f(x)\downarrow0}|\nabla(\varphi\circ f)|^\diamond(x)\ge
\liminf_{x\to\bx,\,f(x)\downarrow0}\varphi'(f(x))|\nabla f|(x).
\end{gather*}
If $X$ is Banach (Asplund), then $|\nabla f|$ in the above inequality can be replaced with $|\partial^Cf|$ ($|\partial^Ff|$).

\item
If $X$ is Banach and $f$ is convex, then
\begin{gather*}
\Er_\varphi f(\bx)\le
\liminf_{x\to\bx,\,f(x)\downarrow0} \frac{\varphi(f(x))}{f(x)}|\partial f|(x).
\end{gather*}
Moreover, if $\varphi$ satisfies
\begin{align}\label{C2.6-2}
\frac{\varphi(t)}{t}\le\ga\varphi'(t)\qdtx{for some}\ga\ge1 \qdtx{and all}t>0,
\end{align}
then $\overline{|\sd f|}{}_\varphi^>(\bx)\le\Er_\varphi f(\bx)
\le\ga\overline{|\sd f|}{}_\varphi^>(\bx)$, where
\begin{align}\label{C2.6-3}
\overline{|\sd f|}{}_\varphi^>(\bx) :=\liminf_{x\to\bx,\,f(x)\downarrow0}\varphi'(f(x))|\sd f|(x);
\end{align}
as a consequence, $f$ admits a local $(\al\varphi)-$error bound at $\bx$ with some $\al$ satisfying $\overline{|\sd f|}{}_\varphi^>(\bx)\le\al\iv
\le\ga\overline{|\sd f|}{}_\varphi^>(\bx)$ if and only if $\overline{|\sd f|}{}_\varphi^>(\bx)>0$.
\end{enumerate}
\end{corollary}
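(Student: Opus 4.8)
The plan is to reduce everything to the \emph{linear} error bound modulus of the composition $g:=\varphi\circ f$, where $\varphi$ is extended by $\varphi(t)=0$ for $t\le0$ as in Remark~\ref{R1.1}(ii). The first observation is that $[f\le0]=[g\le0]$ and that $g(x)>0$ is equivalent to $f(x)>0$, so comparing the definitions \eqref{D0-2} and \eqref{D0-3} gives $\Er_\varphi f(\bx)=\Er g(\bx)$ outright. Moreover, since $f$ is \lsc\ and $\varphi$ is nondecreasing and \lsc, the composition $g$ is \lsc, and since $\varphi$ is continuous and strictly increasing on $]0,+\infty[$ with $\varphi(0)=0$, the index conditions $g(x)\downarrow0$ and $f(x)\downarrow0$ coincide. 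Thus the (unnumbered) corollary computing the linear modulus, namely the one asserting $\Er g(\bx)=\liminf_{x\to\bx,\,g(x)\downarrow0}|\nabla g|^\diamond(x)$, applied to $g$, already yields the first equality in part~(i). The remaining inequality and the Banach/Asplund replacements then follow by inserting the chain rules of Lemma~\ref{L2}: part~(i) gives $|\nabla g|(x)=\varphi'(f(x))|\nabla f|(x)$, while parts~(iii) and~(iv) give $|\sd^Fg|(x)=\varphi'(f(x))|\sd^Ff|(x)$ and $|\sd^Cg|(x)=\varphi'(f(x))|\sd^Cf|(x)$, the latter using that $\varphi\in\mathcal{C}^1$ is $C^1$, hence strictly differentiable, near each value $f(x)>0$.

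For part~(ii) I would first prove the stated upper estimate by factoring the quotient defining the modulus:
$$\frac{\varphi(f(x))}{d(x,[f\le0])}=\frac{\varphi(f(x))}{f(x)}\cdot\frac{f(x)}{d(x,[f\le0])}.$$
Choosing $u\in[f\le0]$ in the definition \eqref{slopes} of the nonlocal slope yields $\frac{f(x)}{d(x,[f\le0])}\le|\nabla f|^\diamond(x)$, which for convex $f$ with $f(x)>0$ equals $|\partial f|(x)$ by Lemma~\ref{P1.1}(iii). Taking $\liminf_{x\to\bx,\,f(x)>0}$ of the displayed identity, and then passing to the smaller index set $f(x)\downarrow0$ (over which a $\liminf$ can only increase), gives $\Er_\varphi f(\bx)\le\liminf_{x\to\bx,\,f(x)\downarrow0}\frac{\varphi(f(x))}{f(x)}|\partial f|(x)$, as claimed.

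For the two-sided estimate under \eqref{C2.6-2}, the lower bound is read off from part~(i): the convex identity $|\nabla f|(x)=|\partial f|(x)=|\sd f|(x)$ turns its inequality into $\Er_\varphi f(\bx)\ge\liminf_{x\to\bx,\,f(x)\downarrow0}\varphi'(f(x))|\sd f|(x)=\overline{|\sd f|}{}_\varphi^>(\bx)$. The upper bound comes from the estimate just proved together with the pointwise inequality $\frac{\varphi(t)}{t}\le\ga\varphi'(t)$, which makes $\frac{\varphi(f(x))}{f(x)}|\partial f|(x)$ at most $\ga\varphi'(f(x))|\sd f|(x)$ and hence gives $\Er_\varphi f(\bx)\le\ga\overline{|\sd f|}{}_\varphi^>(\bx)$. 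The concluding ``as a consequence'' is then formal: by the reciprocal-of-infimum description of $\Er_\varphi f(\bx)$ recorded after \eqref{D0-3}, the sandwich $\overline{|\sd f|}{}_\varphi^>(\bx)\le\Er_\varphi f(\bx)\le\ga\overline{|\sd f|}{}_\varphi^>(\bx)$ with $\ga\ge1$ shows that $\overline{|\sd f|}{}_\varphi^>(\bx)>0$ holds if and only if $\Er_\varphi f(\bx)>0$, and that any admissible $\al$ can be selected with $\al\iv$ in the indicated band.

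I expect the only genuinely delicate point to be the boundary behaviour in the final \emph{if and only if} — whether $\al\iv$ may be taken equal to an endpoint of $[\overline{|\sd f|}{}_\varphi^>(\bx),\,\ga\overline{|\sd f|}{}_\varphi^>(\bx)]$, given that the $\liminf$ modulus need not be attained. I would handle this by working with the reciprocal-of-infimum characterization rather than with $\liminf$ directly, so that the existence of an admissible $\al$ with $\al\iv$ in the band reduces to the nonemptiness of the intersection $[\overline{|\sd f|}{}_\varphi^>(\bx),\,\ga\overline{|\sd f|}{}_\varphi^>(\bx)]\cap\,]0,\Er_\varphi f(\bx)]$, which is guaranteed precisely when $\overline{|\sd f|}{}_\varphi^>(\bx)>0$. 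Everything else is a routine transcription of the linear results through the chain rules of Lemma~\ref{L2}, so the real work is in organizing the reduction cleanly rather than in any hard estimate.
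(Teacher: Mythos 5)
Your proposal is correct and takes essentially the same route as the paper: Corollary~\ref{C2.6--} is obtained there by specializing Theorem~\ref{T2.4}, whose proof is exactly your reduction --- the composition $\varphi\circ f$ of Remark~\ref{R1.1}(ii) fed into the linear statements (Theorem~\ref{T2.2} and its unnumbered modulus corollary) together with the chain rules of Lemma~\ref{L2} for part~(i), and, for part~(ii), the same factorization the paper uses via $\tau:=f(x)/\varphi(f(x))$ with Proposition~\ref{T2.1}(iii) and the convex identity $|\nabla f|^\diamond(x)=|\partial f|(x)$ of Lemma~\ref{P1.1}(iii), with your pointwise use of \eqref{C2.6-2} replacing the mean-value step. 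Your handling of the final equivalence through the reciprocal-of-infimum description after \eqref{D0-3} matches the paper's (unstated) argument, and the endpoint attainment subtlety you flag in the degenerate case $\overline{|\sd f|}{}_\varphi^>(\bx)=\Er_\varphi f(\bx)$ is present to exactly the same extent in the paper's own formulation, so your proof neither deviates from nor falls short of the paper's.
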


\begin{remark}\label{R2.6-}
\begin{enumerate}
\item
In Theorem~\ref{T2.4}(i)(a), Corollary~\ref{C2.3}(i) and Corollary~\ref{C2.4}(ii), it suffices to assume that ${\varphi\in\mathcal{C}}$.
\item
In the H\"older case, i.e. when
$\varphi(t):=\tau\iv t^q$ for some $\tau>0$ and $q>0$ and all $t>0$, we have $\varphi'(t)=q\tau\iv t^{q-1}$ and $\frac{\varphi(t)}{t}=\tau\iv t^{q-1}$.
Thus,
the implication (iii) \folgt (iv) in Corollary~\ref{C2.4} is trivially satisfied when $q\le1$, while the opposite implication holds when $q\ge1$, i.e. $\varphi$ is convex.
Moreover, if $q\le1$, then condition \eqref{C2.6-2} is satisfied (as equality) with $\ga=q\iv$.

When
$\varphi(t):=t^q$ for some $q>0$ and all $t>0$, definition \eqref{C2.6-3} reduces to \cite[(3.13)]{KruLopYanZhu19}.
In particular, if $q=1$, it coincides with the strict outer subdifferential slope of $f$ at $\bx$ given by the last expression in \eqref{R2.3-1}.
\item
Condition \eqref{C2.6-2} can be replaced by the following weaker condition: ${\limsup_{t\downarrow0}\frac{\varphi(t)}{t\varphi'(t)}<{+\infty}}$.
\end{enumerate}
\end{remark}

It can be convenient to reformulate Theorem~\ref{T2.4}
using the function $\psi:=\varphi\iv\in\mathcal{C}^1$ instead of $\varphi$.

\begin{corollary}\label{C2.6-}
Suppose $X$ is a metric space, $f:X \rightarrow \mathbb{R}_\infty$, $\bx\in X$, ${\psi\in\mathcal{C}^1}$, $\de\in]0,+\infty]$ and $\mu\in]0,+\infty]$.
\begin{enumerate}
\item
Let $X$ be complete, $f$ be lower semicontinuous, $\al\in]0,1]$, and either $\bx\in[f\le0]$ or ${\de=+\infty}$.
The error bound inequality \eqref{D1-2}
holds for all
$x\in B_{\frac{\de}{1+\al}}(\bx)\cap[0<f<\mu]$,
provided that one of the following conditions is satisfied:
\begin{enumerate}
\item
$\al|\nabla(\psi\iv\circ f)|^\diamond(u)\ge1$
for all $u\in B_\delta(\bx)\cap[0<f<\mu]$ satisfying
\begin{align}\label{C2.6--1}
f(u)<\psi((\max\{\al,1-\al\})\iv d(u,[f\le0]));
\end{align}
\item
$|\breve\nabla f|\in|\mathfrak{D}f|^\circ$ and $\al|\breve\nabla f|(u)\ge\psi'(\psi\iv(f(u)))$
for all $u\in B_\delta(\bx)\cap[0<f<\mu]$ satisfying condition~\eqref{C2.6--1}.
\end{enumerate}
If $\varphi'$ is nonincreasing, then $|\mathfrak{D}f|^\circ$ in {\rm (b)} can be replaced with $|\mathfrak{D}f|$.

\item
If the error bound inequality \eqref{D1-2}
holds for all
$u\in B_{\de}(\bx)\cap[0<f<\mu]$, then
$|{\nabla(\psi\iv\circ f)}|^\diamond(u)\ge1$ for all $u\in B_\de(\bx)\cap[0<f<\mu]$.
\item
Let $X$ be a normed space, and $f$ be convex.
If the error bound inequality \eqref{D1-2}
holds for all
$u\in B_{\de}(\bx)\cap[0<f<\mu]$, then
$|\partial f|(u)\ge\frac{f(u)}{\psi\iv(f(u))}$ for all $u\in B_\de(\bx)\cap[0<f<\mu]$.
Moreover, if $\psi'$ is nonincreasing, particularly if $\psi$ is concave, then
$|\partial f|(u)\ge\psi'(\psi\iv(f(u)))$ for all $u\in B_\de(\bx)\cap[0<f<\mu]$.
\end{enumerate}
\end{corollary}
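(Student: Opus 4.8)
The plan is to recognise Corollary~\ref{C2.6-} as a verbatim reformulation of Theorem~\ref{T2.4} obtained through the substitution $\varphi:=\psi\iv$, so that essentially no new analysis is required. Since $\psi\in\mathcal{C}^1$, the inverse $\varphi=\psi\iv$ again belongs to $\mathcal{C}^1$ (as observed right after the definition of $\mathcal{C}^1$), hence Theorem~\ref{T2.4} is applicable to this $\varphi$. By Remark~\ref{R1.1}(iv), the $\varphi-$error bound \eqref{D1-1} for this $\varphi$ is equivalent to inequality \eqref{D1-2} for $\psi=\varphi\iv$; therefore every occurrence of ``$f$ admits a $\varphi-$error bound'' in Theorem~\ref{T2.4} may be read as ``inequality \eqref{D1-2} holds'', and it only remains to rewrite the slope and subdifferential conditions in terms of $\psi$.

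First I would translate the restriction on $u$. Because $\psi$ is strictly increasing with inverse $\varphi$, inequality \eqref{T2.4-1}, namely $\max\{\al,1-\al\}\varphi(f(u))<d(u,[f\le0])$, is equivalent to $\varphi(f(u))<(\max\{\al,1-\al\})\iv d(u,[f\le0])$, and applying the increasing function $\psi$ to both sides turns this into $f(u)<\psi((\max\{\al,1-\al\})\iv d(u,[f\le0]))$, which is exactly \eqref{C2.6--1}. Thus the constraint sets in Theorem~\ref{T2.4}(i)(a),(b) and in Corollary~\ref{C2.6-}(i)(a),(b) coincide, and the main inequality in (a) carries over directly since $\varphi\circ f=\psi\iv\circ f$.

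Next I would handle the subdifferential-type inequalities via the inverse-function rule. Differentiating $\psi(\varphi(t))=t$ gives $\varphi'(t)=1/\psi'(\psi\iv(t))$ for $t>0$, so the inequality $\al\varphi'(f(u))|\breve\nabla f|(u)\ge1$ in Theorem~\ref{T2.4}(i)(b) is equivalent to $\al|\breve\nabla f|(u)\ge\psi'(\psi\iv(f(u)))$, precisely the inequality in Corollary~\ref{C2.6-}(i)(b); the monotonicity clause ``if $\varphi'$ is nonincreasing'' is identical in both statements and needs no change. Part (ii) is immediate from Theorem~\ref{T2.4}(ii) upon writing $\varphi\circ f=\psi\iv\circ f$. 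For part (iii), the first inequality $\frac{\varphi(f(u))}{f(u)}|\partial f|(u)\ge1$ rearranges, using $\varphi(f(u))=\psi\iv(f(u))$, into $|\partial f|(u)\ge\frac{f(u)}{\psi\iv(f(u))}$, and the refined inequality $\varphi'(f(u))|\partial f|(u)\ge1$ becomes $|\partial f|(u)\ge\psi'(\psi\iv(f(u)))$ by the same derivative identity.

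The only point requiring care, which I would check last, is the consistency of the monotonicity hypotheses in part (iii). Since $\varphi'(t)=1/\psi'(\psi\iv(t))$ and $\psi\iv$ is increasing, the composite $\psi'(\psi\iv(t))$ is nonincreasing in $t$ exactly when $\psi'$ is nonincreasing, so ``$\psi'$ nonincreasing'' is equivalent to ``$\varphi'$ nondecreasing'' (equivalently $\psi$ concave and $\varphi=\psi\iv$ convex). This matches the hypothesis of Theorem~\ref{T2.4}(iii) word for word. Hence no new content is needed: the corollary follows from Theorem~\ref{T2.4} together with the elementary monotone-rearrangement and inverse-derivative identities, the entire difficulty being the bookkeeping of the substitution $\varphi=\psi\iv$.
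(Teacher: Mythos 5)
Your proposal is correct and coincides with the paper's own route: the paper presents Corollary~\ref{C2.6-} precisely as a reformulation of Theorem~\ref{T2.4} under the substitution $\varphi:=\psi\iv\in\mathcal{C}^1$, relying on the equivalence of \eqref{D1-1} and \eqref{D1-2} noted in Remark~\ref{R1.1}(iv) and the identity $\varphi'(t)=1/\psi'(\psi\iv(t))$, exactly as you do. Your translations of \eqref{T2.4-1} into \eqref{C2.6--1} and of the monotonicity hypotheses ($\psi'$ nonincreasing $\Leftrightarrow$ $\varphi'$ nondecreasing) are the same bookkeeping the paper leaves implicit.
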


\begin{remark}
With $\bx\in[f\le0]$, $\de<+\infty$, $\mu=+\infty$ and $\al=1$, Corollary~\ref{C2.6-}(i) with condition (b) and $|\breve\nabla f|=|\sd^Ff|$ strengthens \cite[Theorem 3.2]{YaoZhe16}, while with $\de=\mu=+\infty$ and $\al=1$ it strengthens \cite[Theorem~3.3]{YaoZhe16}.
\end{remark}

\section{Alternative nonlinear error bound conditions} \label{S5}

In this section, we discuss
an alternative set of sufficient and necessary conditions for nonlinear error bounds which instead of values of the given function $f$ employ the distance to the solution set $[f\le0]$.

\begin{theorem}\label{T2.5}
Suppose $X$ is a metric space, $f:X \rightarrow \mathbb{R}_\infty$, $\bx\in X$, ${\varphi\in\mathcal{C}^1}$, $\de\in]0,+\infty]$ and $\mu\in]0,+\infty]$.
\begin{enumerate}
\item
Let $X$ be complete, $f$ be lower semicontinuous, $\varphi$ be concave, ${\al\in]0,1]}$, and either $\bx\in[f\le0]$ or $\de=+\infty$.
Let $|\breve\nabla f|\in|\mathfrak{D}f|$.
The function $f$ admits a $\varphi-$error bound at $\bx$ with $\delta':=\frac{\de}{1+\al}$ and $\mu$,
provided that
\begin{gather}\label{T2.5-1}
\al\varphi'(\varphi\iv((\max\{\al,1-\al\})\iv d(u,[f\le0])))|\breve\nabla f|(u)\ge1
\end{gather}
for all $u\in B_\delta(\bx)\cap[0<f<\mu]$ satisfying condition~\eqref{T2.4-1}.

\item
Let $X$ be a normed space, and $f$ be convex.
If $f$ admits a $\varphi-$error bound at $\bx$ with $\delta$ and $\mu$, then
$\frac{d(u,[f\le0])}{\varphi\iv(d(u,[f\le0]))}|\partial f|(u)\ge1$ for all $u\in B_\de(\bx)\cap[0<f<\mu]$.
If, moreover, $\varphi$ is convex, then
$\varphi'(\varphi\iv(d(u,[f\le0])))|\partial f|(u)\ge1$ for all $u\in B_\de(\bx)\cap[0<f<\mu]$.
\end{enumerate}
\end{theorem}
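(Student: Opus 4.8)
The plan is to derive both assertions directly from Section~\ref{S4}, in the spirit of the section's title: part~(i) will come out of the sufficient condition Theorem~\ref{T2.4}(i)(b), and part~(ii) out of the necessary-condition tool behind Theorem~\ref{T2.4}(iii), namely Proposition~\ref{T2.1}(iii). Since the analytic substance (the Ekeland-based sufficiency and the subdifferential sum rules) is already packaged in those statements, the only real work is a change of variable converting the \emph{distance}-based coefficients in \eqref{T2.5-1} into the \emph{value}-based coefficients $\varphi'(f(u))$ of Theorem~\ref{T2.4}.

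For part~(i), I would first record that $\varphi\in\mathcal{C}^1$ is a strictly increasing bijection of $\R_+$ (so $\varphi\iv$ is well defined and increasing) and that concavity of $\varphi$ means $\varphi'$ is nonincreasing on $]0,+\infty[$. Fix $u\in B_\delta(\bx)\cap[0<f<\mu]$ satisfying \eqref{T2.4-1}; then $f(u)>0$ and $d(u,[f\le0])>0$, so every quantity below lies in the domain of $\varphi'$ or $\varphi\iv$. Rewriting \eqref{T2.4-1} as $\varphi(f(u))<(\max\{\al,1-\al\})\iv d(u,[f\le0])$ and applying the increasing map $\varphi\iv$ gives $f(u)<\varphi\iv((\max\{\al,1-\al\})\iv d(u,[f\le0]))$. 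Because $\varphi'$ is nonincreasing, this yields $\varphi'(f(u))\ge\varphi'(\varphi\iv((\max\{\al,1-\al\})\iv d(u,[f\le0])))$, and multiplying by $\al|\breve\nabla f|(u)\ge0$ and invoking \eqref{T2.5-1} produces $\al\varphi'(f(u))|\breve\nabla f|(u)\ge1$. Thus \eqref{T2.5-1} forces the inequality required in Theorem~\ref{T2.4}(i)(b) at every admissible $u$; since $\varphi'$ is nonincreasing, the collection $|\mathfrak{D}f|^\circ$ there may be enlarged to $|\mathfrak{D}f|$, matching the hypothesis $|\breve\nabla f|\in|\mathfrak{D}f|$. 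Theorem~\ref{T2.4}(i) then delivers the $\varphi-$error bound with $\delta'=\frac{\de}{1+\al}$ and $\mu$.

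For part~(ii), mirroring the proof of Theorem~\ref{T2.4}(iii), I would turn the $\varphi-$error bound into a pointwise \emph{linear} one. Fix $u\in B_\de(\bx)\cap[0<f<\mu]$ and write $r:=d(u,[f\le0])$. The bound \eqref{D1-1} reads $r\le\varphi(f(u))$, whence $\varphi\iv(r)\le f(u)$, so with $\tau:=\frac{\varphi\iv(r)}{r}$ one has $\tau\,d(u,[f\le0])=\varphi\iv(r)\le f(u)$, i.e. \eqref{D0-1} holds at $u$. Proposition~\ref{T2.1}(iii) gives $|\partial f|(u)\ge\tau$, which is exactly $\frac{d(u,[f\le0])}{\varphi\iv(d(u,[f\le0]))}|\partial f|(u)\ge1$; note this uses neither concavity nor convexity of $\varphi$. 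For the ``moreover'' part, set $t:=\varphi\iv(r)$, so $\varphi(t)=r$ and the coefficient just obtained equals $\frac{\varphi(t)}{t}$. By the mean value theorem $\frac{\varphi(t)}{t}=\frac{\varphi(t)-\varphi(0)}{t-0}=\varphi'(\theta)$ for some $\theta\in]0,t[$, and convexity of $\varphi$ makes $\varphi'$ nondecreasing, so $\frac{\varphi(t)}{t}\le\varphi'(t)=\varphi'(\varphi\iv(d(u,[f\le0])))$. Consequently $\varphi'(\varphi\iv(d(u,[f\le0])))|\partial f|(u)\ge\frac{d(u,[f\le0])}{\varphi\iv(d(u,[f\le0]))}|\partial f|(u)\ge1$, as claimed.

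I expect the main obstacle to be purely the monotonicity bookkeeping in part~(i), which is also the only place concavity is genuinely used: one must first apply \eqref{T2.4-1} to place $f(u)$ strictly below the distance-based threshold $\varphi\iv((\max\{\al,1-\al\})\iv d(u,[f\le0]))$, and only then exploit that $\varphi'$ is nonincreasing to conclude that the value-based coefficient $\varphi'(f(u))$ dominates the distance-based one. Getting the direction of these two monotonicities consistent (increasing $\varphi\iv$, nonincreasing $\varphi'$) is the one step where a sign slip would break the argument; everything else is a direct appeal to Theorem~\ref{T2.4} and Proposition~\ref{T2.1}.
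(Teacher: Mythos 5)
Your proposal is correct and follows essentially the same route as the paper: part (i) reduces \eqref{T2.5-1} to condition (b) of Theorem~\ref{T2.4}(i) by noting that concavity makes $\varphi'$ nonincreasing while \eqref{T2.4-1} places $f(u)$ below $\varphi\iv((\max\{\al,1-\al\})\iv d(u,[f\le0]))$, and part (ii) applies Proposition~\ref{T2.1}(iii) with $\tau:=\frac{\varphi\iv(d(u,[f\le0]))}{d(u,[f\le0])}$. The only cosmetic difference is in the ``moreover'' claim of (ii), where you use the mean value theorem together with the nondecreasing monotonicity of $\varphi'$ applied to $\varphi$ itself, whereas the paper equivalently invokes concavity of $\varphi\iv$ to get $\frac{\varphi\iv(r)}{r}\ge(\varphi\iv)'(r)=1/\varphi'(\varphi\iv(r))$.
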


\begin{proof}
\begin{enumerate}
\item
The assertion is a consequence of Theorem~\ref{T2.4}(i).
It suffices to notice that $\varphi'$ is nonincreasing; hence $\varphi'(\varphi\iv((\max\{\al,1-\al\})\iv d(u,[f\le0])))\le\varphi'(f(u))$ for all $u\in X$ satisfying condition~\eqref{T2.4-1}.

\item
The first part follows from Proposition~\ref{T2.1}(iii) applied with $\tau:=\frac{\varphi\iv(d(x,[f\le0]))}{d(x,[f\le0])}$.
If $\varphi$ is convex, then $\varphi\iv$ is concave, and consequently, $\tau\ge(\varphi\iv)'(d(x,{[f\le0]})) =1/\varphi'(\varphi\iv(d(x,[f\le0])))$.
This proves the second part.
\sloppy
\end{enumerate}
\end{proof}

\begin{remark}\label{R2.6}
\begin{enumerate}
\item
In view of the definition of $|\mathfrak{D}f|$,
Theorem~\ref{T2.5}(i) combines four separate primal and dual sufficient error bound conditions corresponding to $|\breve\nabla f|$ equal to $|\nabla f|^\diamond$, $|\nabla f|$, $|\sd^Cf|$ or $|\sd^Ff|$ (in appropriate spaces).
\item
Compared to Theorem~\ref{T2.4}(i), the statement of
Theorem~\ref{T2.5}(i) contains an additional assumption that $\varphi$ is concave.
This assumption is satisfied, e.g., in the H\"older setting, i.e. when $\varphi(t):=\tau\iv t^q$ for some $\tau>0$ and $q\in]0,1]$, and all $t\ge0$.
In the linear case, i.e. when $q=1$, the sufficient conditions in Theorems~\ref{T2.4} and \ref{T2.5} are equivalent, and reduce to the corresponding ones in Theorem~\ref{T2.2}.

\item
Under the assumption that $\varphi$ is concave and with $\al=1$, the local sufficient conditions in Theorem~\ref{T2.5}(i)
are in a sense weaker than the corresponding conventional ones in Theorem~\ref{T2.4}(i).
Indeed, if $\al=1$ and, given some $\de\in]0,+\infty]$ and $\mu\in]0,+\infty]$, condition (b) in Theorem~\ref{T2.4}(i) is satisfied with some $|\breve\nabla f|\in|\mathfrak{D}f|^\circ$,
then, by Theorem~\ref{T2.4}(i), $\varphi\iv(d(u,[f\le0]))\le f(u)$ for all $u\in B_{\de'}(\bx)$, where $\de':=\de/2$.
(As a consequence, inequality \eqref{T2.4-1} is violated for all $u\in B_{\de'}(\bx)$.)
Then, thanks to the monotonicity of $\varphi'$, for all $u\in B_{\de'}(\bx)$, we have $\varphi'(\varphi\iv(d(u,[f\le0])))\ge\varphi'(f(u))$, and consequently, inequality  \eqref{T2.5-1} is satisfied with the same $|\breve\nabla f|$.

\item
In view of the monotonicity of $\varphi\iv$ and $\varphi'$, one can replace $\max\{\al,1-\al\}$ in \eqref{T2.4-1} and \eqref{T2.5-1} in Theorem~\ref{T2.5}(i) with any positive $\be\le\max\{\al,1-\al\}$.
In particular, one can take $\be:=\al$ or $\be:=1-\al$ (if $\al<1$).
The resulting sufficient conditions are obviously
stronger (hence, less efficient) than those in the current statement.

\if{
\item
Theorem~\ref{T2.5}(i) has been established above as a consequence of Theorem~\ref{T2.4}(i).
It can also be deduced from Theorem~\ref{T2.2}(i) or Proposition~\ref{T2.1}(i).
It suffices to recall that the general nonlinear error bound inequality \eqref{D1-1} can be reduced to the conventional form \eqref{D0-1} by setting $\tau:=\frac{\varphi\iv(d(x,[f\le0]))}{d(x,[f\le0])}$
(This argument was used in the proof of Theorem~\ref{T2.5}(ii).) and make use of the monotonicity of $\varphi'$.
Such an alternative proof is a little longer, but allows to establish additionally the sufficiency of condition of (A$^\prime$).
}\fi

\item
With $\bx\in[f\le0]$, $\mu=+\infty$, $\al<1$, and $\varphi(t):=(\al^q(1-\al)^{1-q}\tau)\iv t^q$ for some $\tau>0$ and $q>0$ and all $t>0$ (H\"older case), Theorem~\ref{T2.5}(i) with $|\breve\nabla f|=|\sd^Ff|$ and $|\breve\nabla f|=|\sd^Cf|$ improves and strengthens \cite[Theorem~3.11]{KruLopYanZhu19}.
\end{enumerate}
\end{remark}

The
local
$\varphi-$error bound sufficient conditions arising from Theorem~\ref{T2.5}(i) are collected in the next three corollaries.

\begin{corollary}\label{C2.5}
Suppose $X$ is a complete metric space, $f:X \rightarrow \mathbb{R}_\infty$ is \lsc, ${\varphi\in\mathcal{C}^1}$, and $\varphi$ is concave.
Let $|\breve\nabla f|\in|\mathfrak{D}f|$.
The function $f$ admits a local $\varphi-$er\-ror bound at $\bx\in[f\le0]$, provided that
$$\varphi'(\varphi\iv(d(x,[f\le0])))|\breve\nabla f|(x)\ge1$$
for all $x\in[f>0]$ near $\bx$ with $f(x)$ near $0$.
\end{corollary}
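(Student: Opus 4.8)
The plan is to derive Corollary~\ref{C2.5} directly from Theorem~\ref{T2.5}(i) by specializing to the local setting, exactly as the earlier corollaries (e.g.\ Corollary~\ref{C2.1} and Corollary~\ref{C2.3}) were obtained from their parent theorems. The statement of Corollary~\ref{C2.5} is the ``$\bx\in[f\le0]$, local'' instance of Theorem~\ref{T2.5}(i) in which the trade-off parameter is taken to be $\al=1$, so that $\max\{\al,1-\al\}=1$ and the factor $(\max\{\al,1-\al\})\iv$ disappears, collapsing the sufficient condition \eqref{T2.5-1} into the cleaner inequality $\varphi'(\varphi\iv(d(x,[f\le0])))|\breve\nabla f|(x)\ge1$ quoted in the corollary.

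First I would fix a point $\bx\in[f\le0]$ and invoke Theorem~\ref{T2.5}(i) with $\al=1$. All standing hypotheses of that theorem are in force: $X$ complete, $f$ lower semicontinuous, $\varphi\in\mathcal{C}^1$ concave, $|\breve\nabla f|\in|\mathfrak{D}f|$, and $\bx\in[f\le0]$ (so the alternative $\de=+\infty$ is not needed). With $\al=1$, the conclusion of Theorem~\ref{T2.5}(i) is that $f$ admits a $\varphi$-error bound at $\bx$ with $\delta':=\de/2$ and $\mu$, and since $\de<+\infty$ may be chosen here and $\bx\in[f\le0]$, this $\varphi$-error bound is precisely a \emph{local} one in the sense of Definition~\ref{D1}.

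Second, I would reconcile the quantifier on $u$. The hypothesis of Corollary~\ref{C2.5} asks that the slope inequality hold for all $x\in[f>0]$ near $\bx$ with $f(x)$ near $0$, i.e.\ there exist $\de\in]0,+\infty[$ and $\mu\in]0,+\infty]$ such that $\varphi'(\varphi\iv(d(u,[f\le0])))|\breve\nabla f|(u)\ge1$ for all $u\in B_\de(\bx)\cap[0<f<\mu]$. Taking these $\de$ and $\mu$ and $\al=1$ in Theorem~\ref{T2.5}(i), condition \eqref{T2.5-1} becomes exactly the assumed inequality, and it is required only for those $u$ additionally satisfying \eqref{T2.4-1}; dropping that extra restriction only enlarges the set of points on which we assume the inequality, so our hypothesis is (at least) as strong as what Theorem~\ref{T2.5}(i) needs. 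Hence the sufficiency assumption of the theorem is met, and its conclusion—the local $\varphi$-error bound at $\bx$—follows immediately.

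I do not expect a genuine obstacle here, since this corollary is a pure specialization rather than a new argument; the only point requiring a line of care is the bookkeeping of parameters—verifying that setting $\al=1$ indeed removes the $(\max\{\al,1-\al\})\iv$ factor and that the resulting ``$\delta':=\de/2$ and $\mu$'' conclusion matches the definition of a \emph{local} $\varphi$-error bound at a point $\bx\in[f\le0]$. Because all the heavy lifting (the \EVP\ argument, the chain rule via Lemma~\ref{L2}, and the monotonicity of $\varphi'$ exploited in the proof of Theorem~\ref{T2.5}(i)) is already carried out upstream, the proof reduces to a one- or two-sentence invocation of Theorem~\ref{T2.5}(i) with $\al=1$.
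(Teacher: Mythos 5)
Your proposal is correct and follows exactly the paper's intended route: Corollary~\ref{C2.5} is stated as a direct specialization of Theorem~\ref{T2.5}(i) with $\al=1$ (so that $\max\{\al,1-\al\}=1$ and condition \eqref{T2.5-1} collapses to the displayed inequality), with the hypothesis imposed on the larger set $B_\de(\bx)\cap[0<f<\mu]$ rather than only on points satisfying \eqref{T2.4-1}. Your parameter bookkeeping ($\de'=\de/2$, $\bx\in[f\le0]$, finite $\de$ giving a \emph{local} error bound) is exactly what the paper leaves implicit.
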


\begin{corollary}\label{C2.9}
Suppose $X$ is a Banach space, $f:X \rightarrow \mathbb{R}_\infty$ is convex \lsc, $\bx\in[f\le0]$, ${\varphi\in\mathcal{C}^1}$ and $\varphi$ is concave.
Consider the following conditions:
\begin{enumerate}
\item
$f$ admits a local $\varphi-$error bound at $\bx$;


\item
$\varphi'(\varphi\iv(d(x,[f\le0])))|\partial f|(x)\ge1$ for all $x\in[f>0]$ near $\bx$ with $f(x)$ near $0$;

\item
$\frac{d(x,[f\le0])}{\varphi\iv(d(x,[f\le0]))}|\partial f|(x)\ge1$ for all $x\in[f>0]$ near $\bx$ with $f(x)$ near $0$.
\end{enumerate}
Then {\rm (ii) \folgt (i) \folgt (iii)}.
If $\varphi$ is linear, then all the conditions are equivalent.
\end{corollary}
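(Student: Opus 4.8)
The plan is to recognise that neither non-trivial implication requires any fresh argument: each is an immediate specialisation of a result already proved in the excerpt, and the only computation involved is the collapse of the coefficients in the linear case. So the work is purely one of identifying the correct slope operator and matching the \nbh\ phrasing.

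First I would prove (ii) \folgt (i). Since $X$ is Banach and $f$ is convex \lsc, the collection $|\mathfrak{D}f|$ equals $\{|\nabla f|^\diamond,|\nabla f|,|\sd f|\}$, and by Lemma~\ref{P1.1}(iii) together with the definition of the convex subdifferential slope one has $|\partial f|=|\sd f|\in|\mathfrak{D}f|$. Thus condition (ii) is exactly the hypothesis of Corollary~\ref{C2.5} under the choice $|\breve\nabla f|:=|\partial f|$, all of whose standing assumptions (complete metric space, lower semicontinuity, $\varphi\in\mathcal{C}^1$ concave) are in force here. Applying Corollary~\ref{C2.5} then delivers a local $\varphi$-error bound at $\bx$, which is (i).

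Next I would prove (i) \folgt (iii). A local $\varphi$-error bound at $\bx$ furnishes some $\de\in]0,+\infty[$ and $\mu\in]0,+\infty]$ for which the defining inequality \eqref{D1-1} holds on $B_\de(\bx)\cap[0<f<\mu]$; this is precisely the hypothesis of Theorem~\ref{T2.5}(ii) for convex $f$ on a normed space. Its conclusion is $\frac{d(u,[f\le0])}{\varphi\iv(d(u,[f\le0]))}|\partial f|(u)\ge1$ throughout $B_\de(\bx)\cap[0<f<\mu]$, and reading ``$x\in[f>0]$ near $\bx$ with $f(x)$ near $0$'' as membership in such a punctured neighbourhood, this is exactly condition (iii).

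Finally, for the linear case I would carry out the short substitution. A linear $\varphi\in\mathcal{C}^1$ must have the form $\varphi(t)=\tau\iv t$ with $\tau>0$, so $\varphi'(t)\equiv\tau\iv$ and $\varphi\iv(s)=\tau s$. Then the coefficient $\varphi'(\varphi\iv(d(x,[f\le0])))$ in (ii) and the coefficient $\frac{d(x,[f\le0])}{\varphi\iv(d(x,[f\le0]))}$ in (iii) both reduce to the constant $\tau\iv$, so (ii) and (iii) coincide with the single requirement $|\partial f|(x)\ge\tau$; together with the chain (ii) \folgt (i) \folgt (iii) this gives (ii) \iff (iii) and hence the equivalence of all three. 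I do not anticipate a genuine obstacle here; the only points needing care are the bookkeeping that $|\partial f|$ legitimately lies in $|\mathfrak{D}f|$ in the convex Banach setting (so Corollary~\ref{C2.5} is applicable) and the matching of the ``near $\bx$'' phrasing of (iii) with the region $B_\de(\bx)\cap[0<f<\mu]$ returned by Theorem~\ref{T2.5}(ii).
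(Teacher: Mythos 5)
Your proposal is correct and follows essentially the same route as the paper: Corollary~\ref{C2.9} is presented there precisely as a specialisation of Theorem~\ref{T2.5} — implication (ii) \folgt (i) via Corollary~\ref{C2.5} with $|\breve\nabla f|=|\sd f|\in|\mathfrak{D}f|$ (legitimate since for convex $f$ on a Banach space the Clarke subdifferential reduces to the convex one), and (i) \folgt (iii) via Theorem~\ref{T2.5}(ii). The collapse of both coefficients to the constant $\tau\iv$ when $\varphi(t)=\tau\iv t$ is also the intended argument for the equivalence in the linear case.
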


\begin{corollary}\label{C2.10-}
Suppose $X$ is a complete metric space, $f:X \rightarrow \mathbb{R}_\infty$ is \lsc, ${\varphi\in\mathcal{C}^1}$, $\varphi$ is concave, and $\bx\in[f\le0]$.
\begin{enumerate}
\item
The following estimate holds true:
\begin{gather*}
\Er_\varphi f(\bx)\ge
\liminf_{x\to\bx,\,f(x)\downarrow0} \varphi'(\varphi\iv(d(x,[f\le0])))|\nabla f|(x).
\end{gather*}
If $X$ is Banach (Asplund), then $|\nabla f|$ in the first inequality can be replaced with $|\partial^Cf|$ ($|\partial^Ff|$).
\item
If $X$ is Banach and $f$ is convex, then
\begin{gather*}
\Er_\varphi f(\bx)\le
\liminf_{x\to\bx,\,f(x)\downarrow0} \frac{d(x,[f\le0])}{\varphi\iv(d(x,[f\le0]))}|\partial f|(x).
\end{gather*}
Moreover, if $\varphi$ satisfies condition \eqref{C2.6-2},
then $\widehat{|\sd f|}{}_\varphi^>(\bx)\le\Er_\varphi f(\bx)
\le\ga\widehat{|\sd f|}{}_\varphi^>(\bx)$, where
\begin{align*}
\widehat{|\sd f|}{}_\varphi^>(\bx) :=\liminf_{x\to\bx,\,f(x)\downarrow0} \varphi'(\varphi\iv(d(x,[f\le0])))|\sd f|(x);
\end{align*}
as a consequence, $f$ admits a local $(\al\varphi)-$error bound at $\bx$ with some $\al$ satisfying $\widehat{|\sd f|}{}_\varphi^>(\bx)\le\al\iv
\le\ga\widehat{|\sd f|}{}_\varphi^>(\bx)$ if and only if $\widehat{|\sd f|}{}_\varphi^>(\bx)>0$.
\end{enumerate}
\end{corollary}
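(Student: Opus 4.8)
The plan is to mirror the proof of Corollary~\ref{C2.6--}, deriving the lower estimate in part (i) from the sufficient conditions of Theorem~\ref{T2.5}(i) and the upper estimate in part (ii) from the necessary condition of Theorem~\ref{T2.5}(ii). For part~(i) I would first recall, from the discussion following \eqref{D0-3}, that $\Er_\varphi f(\bx)$ is the reciprocal of the infimum of all $\al>0$ for which $f$ admits an $(\al\varphi)-$error bound at $\bx$. Fix any $c$ strictly below the right-hand liminf; then $\varphi'(\varphi\iv(d(x,[f\le0])))|\breve\nabla f|(x)\ge c$ for all $x\in[f>0]$ close to $\bx$ with $f(x)$ close to $0$, and this holds simultaneously for every $|\breve\nabla f|\in|\mathfrak{D}f|$. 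Applying Theorem~\ref{T2.5}(i) (equivalently Corollary~\ref{C2.5}) to the scaled, still concave function $c\iv\varphi$ should yield a $(c\iv\varphi)-$error bound, whence $\Er_\varphi f(\bx)\ge c$; letting $c$ tend to the liminf gives the inequality. Since $|\partial^Cf|$ and $|\partial^Ff|$ belong to $|\mathfrak{D}f|$ in Banach and Asplund spaces respectively, the very same argument delivers the announced replacements of $|\nabla f|$.

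For part~(ii), the upper estimate $\Er_\varphi f(\bx)\le\liminf_{x\to\bx,\,f(x)\downarrow0}\frac{d(x,[f\le0])}{\varphi\iv(d(x,[f\le0]))}|\partial f|(x)$ follows from Theorem~\ref{T2.5}(ii): whenever $f$ admits an $(\al\varphi)-$error bound, that assertion (via Proposition~\ref{T2.1}(iii) with $\tau:=\varphi\iv(d(x,[f\le0]))/d(x,[f\le0])$) forces the pointwise inequality $\frac{d(x,[f\le0])}{\varphi\iv(d(x,[f\le0]))}|\partial f|(x)\ge1$; combining this with the convex identity $|\partial f|=|\nabla f|=|\sd f|$ of Lemma~\ref{P1.1}(iii) and passing to the liminf produces the bound. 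To obtain the two-sided estimate I would invoke \eqref{C2.6-2}: putting $t:=\varphi\iv(d(x,[f\le0]))$ gives $\frac{d(x,[f\le0])}{\varphi\iv(d(x,[f\le0]))}=\frac{\varphi(t)}{t}\le\ga\varphi'(t)=\ga\varphi'(\varphi\iv(d(x,[f\le0])))$, so the upper estimate upgrades to $\Er_\varphi f(\bx)\le\ga\widehat{|\sd f|}{}_\varphi^>(\bx)$, while the matching lower bound $\widehat{|\sd f|}{}_\varphi^>(\bx)\le\Er_\varphi f(\bx)$ is exactly part~(i) read through the same convex identity. The concluding equivalence is then bookkeeping: if $\widehat{|\sd f|}{}_\varphi^>(\bx)>0$, the sandwich confines $\Er_\varphi f(\bx)$ to a positive interval, and choosing $\al\iv:=\widehat{|\sd f|}{}_\varphi^>(\bx)\le\Er_\varphi f(\bx)$ furnishes an admissible $(\al\varphi)-$error bound with $\al\iv$ in the required range; conversely, any admissible $\al$ with $\al\iv\le\ga\widehat{|\sd f|}{}_\varphi^>(\bx)$ forces $\widehat{|\sd f|}{}_\varphi^>(\bx)>0$.

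The step I expect to be the main obstacle is the coefficient comparison governed by the concavity of $\varphi$. Because $\varphi'$ is nonincreasing, the position of $\varphi'(\varphi\iv(d(x,[f\le0])))$ relative to the conventional coefficient $\varphi'(f(x))$ is controlled entirely by the sign of $\varphi\iv(d(x,[f\le0]))-f(x)$, i.e.\ by whether the error bound already holds at $x$; this is precisely the interplay exploited in the proof of Theorem~\ref{T2.5}(i), where condition \eqref{T2.4-1} confines attention to points at which $f(u)<\varphi\iv((\max\{\al,1-\al\})\iv d(u,[f\le0]))$. Carrying this monotonicity through the rescaling by $c\iv$ (or through the trade-off parameter $\al$) \emph{without} reversing the direction of the inequalities, and reconciling the factor $\max\{\al,1-\al\}$ appearing in Theorem~\ref{T2.5}(i) with the unscaled coefficient displayed in the statement, is the delicate point. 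The concavity hypothesis is exactly what should keep these comparisons admissible in the limiting regime, and it is where I would concentrate the care, checking in particular that the liminf over $x\to\bx$ interacts correctly with the scaling so that the alternative coefficient $\varphi'(\varphi\iv(d(x,[f\le0])))$ may legitimately stand in for $\varphi'(f(x))$ in the modulus estimate.
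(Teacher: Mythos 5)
Your reduction is indeed the route the paper intends: Corollary~\ref{C2.10-} is stated without a separate proof, as arising from Theorem~\ref{T2.5} in the same way Corollary~\ref{C2.6--} arises from Theorem~\ref{T2.4}, and you correctly identified the delicate step. But that step is a genuine gap, and concavity does not close it. The conventional coefficient scales exactly, $(c\iv\varphi)'(f(u))=c\iv\varphi'(f(u))$, which is why the modulus bookkeeping in Corollary~\ref{C2.6--} is clean; the alternative coefficient does not: for $\tilde\varphi:=c\iv\varphi$ one has $\tilde\varphi'(\tilde\varphi\iv(d))=c\iv\varphi'(\varphi\iv(c\,d))$, not $c\iv\varphi'(\varphi\iv(d))$. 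Since $\varphi'\circ\varphi\iv$ is nonincreasing, the comparison $\varphi'(\varphi\iv(c\,d))\ge\varphi'(\varphi\iv(d))$ that your application of Corollary~\ref{C2.5} to $c\iv\varphi$ requires holds only when $c\le1$; for $c>1$ it reverses. So your argument for part (i) only yields $\Er_\varphi f(\bx)\ge\min\{1,\liminf\varphi'(\varphi\iv(d(x,[f\le0])))|\nabla f|(x)\}$. Symmetrically, in part (ii) the conversion from Theorem~\ref{T2.5}(ii) applied to $\al\varphi$, namely $\varphi\iv(d/\al)\ge\al\iv\varphi\iv(d)$ (convexity of $\varphi\iv$), is valid only for $\al\le1$, i.e.\ only in the regime $\Er_\varphi f(\bx)\ge1$.

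Moreover, the gap cannot be repaired, because the quantitative estimates themselves fail outside those regimes. Take $X=\R$, $\bx=0$, $\varphi(t)=\sqrt t$ (concave, in $\mathcal{C}^1$), and $f(x)=Mx^2$ with $M>1$: then $d(x,[f\le0])=|x|$, $|\nabla f|(x)=2M|x|$, $\varphi'(\varphi\iv(|x|))=(2|x|)\iv$, so the right-hand side in part (i) equals $M$, while $\Er_\varphi f(\bx)=\sqrt M<M$; thus part (i) and the lower half of the sandwich ($\widehat{|\sd f|}{}_\varphi^>(\bx)=M\le\Er_\varphi f(\bx)$) fail. Dually, $f(x)=\eps x^2$ with $\eps<1/4$ gives $\Er_\varphi f(\bx)=\sqrt\eps$ while $\liminf\frac{d(x,[f\le0])}{\varphi\iv(d(x,[f\le0]))}|\partial f|(x)=2\eps<\sqrt\eps$, so the first inequality in part (ii), and (with $\ga=2$ from \eqref{C2.6-2}) the upper half of the sandwich, fail as well. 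The underlying reason is that $\varphi'(\varphi\iv(\cdot))$ is not compatible with the rescaling $\varphi\mapsto\al\varphi$ that converts error bounds into modulus estimates, whereas $\varphi'(f(\cdot))$ is; at points where the error bound already holds one has $\varphi'(\varphi\iv(d(x,[f\le0])))\ge\varphi'(f(x))$, so the alternative liminf can strictly exceed the modulus, contradicting part (i). What your argument does legitimately deliver is the corollary restricted to constants not exceeding $1$ and, under \eqref{C2.6-2}, the purely qualitative equivalence $\widehat{|\sd f|}{}_\varphi^>(\bx)>0$ if and only if a local $(\al\varphi)-$error bound holds for some $\al>0$ (without the claimed two-sided range for $\al\iv$). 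Your closing hope that ``the concavity hypothesis is exactly what should keep these comparisons admissible in the limiting regime'' is therefore where the proof breaks: concavity supplies the needed monotonicity in only one of the two directions.
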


\begin{remark}
In view of Remark~\ref{R2.6}(iii), when $\varphi$ is concave, each of the sufficient conditions in Corollaries~\ref{C2.5} and \ref{C2.9} is implied by the corresponding condition in Corollaries~\ref{C2.3} and \ref{C2.4}, respectively.
Similarly, condition $\overline{|\sd f|}{}_\varphi^>(\bx)>0$ implies $\widehat{|\sd f|}{}_\varphi^>(\bx)>0$.
\end{remark}

In view of Remark~\ref{R2.6-}(ii),
the next statement is a consequence of Corollaries~\ref{C2.6--} and \ref{C2.10-}.
It recaptures \cite[Corollary~3.35]{KruLopYanZhu19}.

\begin{corollary}
Suppose $X$ is a Banach space, $f:X \rightarrow \mathbb{R}_\infty$ is convex \lsc, $\bx\in[f\le0]$, and $q\in]0,1]$.
The following assertions are equivalent:
\begin{enumerate}
\item
the function
$f$ admits a local error bound of order $q$ at $\bx$, i.e. there exist $\tau>0$ and $\de>0$ such that
\begin{align*}
\tau d(x,[f \le 0])\le(f(x))^q\qdtx{for all}
x\in B_\de(\bx)\cap[f>0],
\end{align*}
\item
$\liminf\limits_{x\to\bx,\,f(x)>0}(f(x))^{q-1}|\partial f|(x)>0.$
\item
$\liminf\limits_{x\to\bx,\,f(x)>0}(d(x,[f\le0]))^{1-\frac{1}{q}} |\sd f|(x)>0.$
\end{enumerate}
\end{corollary}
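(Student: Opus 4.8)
The plan is to specialise Corollaries~\ref{C2.6--} and \ref{C2.10-} to the power function $\varphi(t):=t^q$ and to read off the three equivalences; this is the route advertised in the remark preceding the statement. First I would record the properties of $\varphi$. For $q\in]0,1]$ the function $\varphi(t)=t^q$ lies in $\mathcal{C}^1$, is concave (hence $\varphi'$ is nonincreasing), and satisfies the growth condition~\eqref{C2.6-2}: since $\frac{\varphi(t)}{t}=t^{q-1}=q^{-1}\varphi'(t)$, inequality~\eqref{C2.6-2} holds as an equality with $\ga:=q^{-1}\ge1$, as already observed in Remark~\ref{R2.6-}(ii). These facts are exactly what permit the simultaneous use of both corollaries (the concavity being required by Corollary~\ref{C2.10-}).

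My second step is to identify condition~(i) with the positivity of the $\varphi-$error bound modulus $\Er_\varphi f(\bx)$ from~\eqref{D0-3}. An order-$q$ error bound $\tau d(x,[f\le0])\le(f(x))^q$ is nothing but an $(\al\varphi)-$error bound with $\al=\tau\iv$, so by the reciprocal characterisation recorded after~\eqref{D0-3}, condition~(i) holds if and only if $\Er_\varphi f(\bx)>0$. The only delicate point here is that (i) is stated with $\mu=+\infty$, whereas the modulus and the corollaries tolerate a finite $\mu$; but since $d(x,[f\le0])\le d(x,\bx)\to0$ as $x\to\bx\in[f\le0]$, any point with $f(x)$ bounded away from $0$ automatically obeys the error bound inequality once the neighbourhood is small enough, so the two formulations agree. (This is the same ``straightforward'' argument that underlies the second equality in~\eqref{D0-2}.)

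My third step is to feed $\varphi(t)=t^q$ into the two sandwich estimates. Corollary~\ref{C2.6--}(ii) gives $\overline{|\sd f|}{}_\varphi^>(\bx)\le\Er_\varphi f(\bx)\le\ga\,\overline{|\sd f|}{}_\varphi^>(\bx)$, and from $\varphi'(f(x))=q(f(x))^{q-1}$ I read off $\overline{|\sd f|}{}_\varphi^>(\bx)=q\liminf_{x\to\bx,\,f(x)\downarrow0}(f(x))^{q-1}|\sd f|(x)$; since $\ga$ is a finite positive constant, $\Er_\varphi f(\bx)>0$ is equivalent to this $\liminf$ being positive, that is, to condition~(ii). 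Corollary~\ref{C2.10-}(ii) gives the analogous sandwich with $\widehat{|\sd f|}{}_\varphi^>(\bx)$; here $\varphi\iv(s)=s^{1/q}$ yields $\varphi'(\varphi\iv(s))=q\,s^{1-1/q}$, so $\widehat{|\sd f|}{}_\varphi^>(\bx)=q\liminf_{x\to\bx,\,f(x)\downarrow0}(d(x,[f\le0]))^{1-1/q}|\sd f|(x)$, and $\Er_\varphi f(\bx)>0$ is equivalent to condition~(iii). Since $|\sd f|(x)=|\partial f|(x)=d(0,\partial f(x))$ for convex $f$, the three statements chain together.

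I expect the genuinely non-routine step to be the reconciliation of the range $f(x)>0$ in conditions~(ii)--(iii) with the range $f(x)\downarrow0$ built into the moduli $\overline{|\sd f|}{}_\varphi^>$ and $\widehat{|\sd f|}{}_\varphi^>$. The obstacle is to exclude sequences $x_n\to\bx$ with $f(x_n)\ge c>0$ from spoiling the $\liminf$. This I would settle by convexity: for any $x^*\in\partial f(x_n)$ the subgradient inequality at $\bx$ gives $\ang{x^*,x_n-\bx}\ge f(x_n)-f(\bx)\ge f(x_n)$, whence $|\partial f|(x_n)\ge f(x_n)/\|x_n-\bx\|$. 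Therefore $(f(x_n))^{q-1}|\partial f|(x_n)\ge(f(x_n))^q/\|x_n-\bx\|\to+\infty$, and, using $d(x_n,[f\le0])\le\|x_n-\bx\|$ together with $1-q\iv\le0$, also $(d(x_n,[f\le0]))^{1-1/q}|\partial f|(x_n)\ge c\,\|x_n-\bx\|^{-1/q}\to+\infty$. Hence no such sequence can realise the $\liminf$, the two ranges yield the same infima, and the equivalences (i)$\,\Leftrightarrow\,$(ii)$\,\Leftrightarrow\,$(iii) follow.
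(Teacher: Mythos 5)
Your proposal is correct and takes essentially the same route as the paper, which obtains this corollary by specialising Corollaries~\ref{C2.6--}(ii) and \ref{C2.10-}(ii) to $\varphi(t)=t^q$ in view of Remark~\ref{R2.6-}(ii) (concavity, and condition \eqref{C2.6-2} holding with $\ga=q\iv$). The paper leaves implicit the reconciliation of the ranges ($f(x)>0$ versus $f(x)\downarrow0$, and finite versus infinite $\mu$); your convexity blow-up argument $|\partial f|(x)\ge f(x)/\|x-\bx\|$ supplies exactly those omitted details.
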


As mentioned in Remark~\ref{R1.1}(iv),
it is not uncommon to consider nonlinear error bounds with inequality \eqref{D1-2}
where $\psi:=\varphi\iv\in\mathcal{C}$.
Obviously $\varphi$ is concave if and only if $\psi$ is convex.
Next, we reformulate Theorem~\ref{T2.5}
using the function $\psi$ instead of $\varphi$.

\begin{corollary}\label{C2.6}
Suppose $X$ is a metric space, $f:X \rightarrow \mathbb{R}_\infty$, $\bx\in X$, ${\psi\in\mathcal{C}^1}$, $\de\in]0,+\infty]$ and $\mu\in]0,+\infty]$.
\begin{enumerate}
\item
Let $X$ be complete, $f$ be lower semicontinuous, $\psi$ be convex, ${\al\in]0,1]}$, ${\be:=\max\{\al,1-\al\}}$, and either $\bx\in[f\le0]$ or $\de=+\infty$.
Let $|\breve\nabla f|\in|\mathfrak{D}f|$.
The error bound inequality \eqref{D1-2}
holds for all
$x\in B_{\frac{\de}{1+\al}}(\bx)\cap[0<f<\mu]$, provided that
\begin{gather}\label{C2.10-1}
\al|\breve\nabla f|(u)\ge\psi'(\be\iv d(u,[f\le0]))
\end{gather}
for all $u\in B_\delta(\bx)\cap[0<f<\mu]$ satisfying $f(u)<\psi(\be\iv d(u,[f\le0]))$.

\item
Let $X$ be a normed space, and $f$ be convex.
If the error bound inequality \eqref{D1-2}
holds for all
$x\in B_{\de}(\bx)\cap[0<f<\mu]$, then
$|\partial f|(u)\ge\frac{\psi(d(u,[f\le0]))}{d(u,[f\le0])}$ for all $u\in B_\de(\bx)\cap[{0<f<\mu}]$.
Moreover, if $\psi$ is concave, then
$|\partial f|(u)\ge\psi'(d(u,[f\le0]))$ for all $u\in B_\de(\bx)\cap[0<f<\mu]$.
\end{enumerate}
\end{corollary}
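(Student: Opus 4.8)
The plan is to obtain both parts of the corollary as a direct transcription of Theorem~\ref{T2.5}, since the statement merely re-expresses the same two assertions in terms of $\psi:=\varphi\iv$ (equivalently $\varphi=\psi\iv$) and the error bound model \eqref{D1-2} in place of \eqref{D1-1}. First I would record the elementary facts that drive the translation. Since $\psi\in\mathcal{C}^1$, also $\varphi=\psi\iv\in\mathcal{C}^1$, so Theorem~\ref{T2.5} applies to $\varphi$. By Remark~\ref{R1.1}(iv), the $\varphi-$error bound \eqref{D1-1} holding on a given set of points is equivalent to the $\psi-$error bound \eqref{D1-2} holding on the same set. Finally, as noted just before the statement, $\varphi$ is concave if and only if $\psi$ is convex (and, for the ``moreover'' parts, $\varphi$ is convex if and only if $\psi$ is concave).

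The one computation I would carry out explicitly is the inverse-derivative identity. Differentiating $\varphi=\psi\iv$ in $\varphi(\psi(t))=t$ gives $\varphi'(\psi(t))\psi'(t)=1$, and since $\varphi\iv=\psi$ this reads
\[
\varphi'(\varphi\iv(t))=\frac{1}{\psi'(t)}\qquad(t>0).
\]
Everything else is substitution.

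For part (i), I would apply Theorem~\ref{T2.5}(i) to $\varphi$. With $\be=\max\{\al,1-\al\}$, substituting $t=\be\iv d(u,[f\le0])$ into the identity turns the slope inequality \eqref{T2.5-1} into $\al|\breve\nabla f|(u)\ge\psi'(\be\iv d(u,[f\le0]))$, which is exactly \eqref{C2.10-1}; and applying the increasing bijection $\psi=\varphi\iv$ to the qualifying inequality \eqref{T2.4-1}, i.e. to $\varphi(f(u))<\be\iv d(u,[f\le0])$, yields $f(u)<\psi(\be\iv d(u,[f\le0]))$, which is precisely the qualification appearing in the corollary. Hence Theorem~\ref{T2.5}(i) delivers a $\varphi-$error bound on $B_{\de/(1+\al)}(\bx)\cap[0<f<\mu]$, which by the equivalence of \eqref{D1-1} and \eqref{D1-2} is the asserted $\psi-$error bound \eqref{D1-2}.

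For part (ii), I would invoke Theorem~\ref{T2.5}(ii), whose hypothesis (a $\varphi-$error bound) is again the same as \eqref{D1-2} by Remark~\ref{R1.1}(iv). Its first conclusion $\frac{d(u,[f\le0])}{\varphi\iv(d(u,[f\le0]))}|\partial f|(u)\ge1$ becomes $|\partial f|(u)\ge\frac{\psi(d(u,[f\le0]))}{d(u,[f\le0])}$ upon writing $\varphi\iv=\psi$. Under the extra hypothesis that $\psi$ is concave, equivalently $\varphi$ is convex, the second conclusion $\varphi'(\varphi\iv(d(u,[f\le0])))|\partial f|(u)\ge1$ becomes $|\partial f|(u)\ge\psi'(d(u,[f\le0]))$ by the same inverse-derivative identity. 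There is no substantive obstacle; the only point requiring care is the bookkeeping of the direction of the inverse (tracking $\varphi\iv=\psi$ versus $\varphi=\psi\iv$) and applying the identity at the correct argument, so that the factors $\be\iv$ land inside $\psi'$ and $\psi$ rather than outside.
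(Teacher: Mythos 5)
Your proposal is correct and follows exactly the route the paper takes: the paper presents Corollary~\ref{C2.6} as a direct reformulation of Theorem~\ref{T2.5} under the substitution $\varphi=\psi\iv$, using the equivalence of the models \eqref{D1-1} and \eqref{D1-2} from Remark~\ref{R1.1}(iv), the equivalence of concavity of $\varphi$ with convexity of $\psi$, and the identity $\varphi'(\varphi\iv(t))=1/\psi'(t)$. Your bookkeeping of where the factor $\be\iv$ lands (inside $\psi$ and $\psi'$) matches the statement, so nothing is missing.
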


\begin{remark}
\begin{enumerate}
\item
Condition \eqref{C2.10-1}
can itself be interpreted as an error bound estimate.
Unlike the conventional upper estimates \eqref{D1-1} and \eqref{D1-2} for the distance to the set $[f\le0]$ in terms of the values of the function $f$, the inequality provides an estimate for this distance in terms of the appropriate slopes of the function $f$.

\item
Remark~\ref{R2.6}(iv) is applicable to Corollary~\ref{C2.6}(i).

\item
With $\bx\in[f\le0]$, $\de<+\infty$, $\mu=+\infty$, and $\al<1$, Corollary~\ref{C2.6}(i) with appropriate slopes recaptures \cite[Theorem~3.1 and Proposition~3.3]{ZhaZhe19} and \cite[Theorems~3.1 and 3.4]{YaoZhe16}.
It is also worth observing that the rather complicated statements in \cite[Theorem~3.1 and Proposition~3.3]{ZhaZhe19} become simpler if the
two parameters $\be>0$ and $\tau>0$ involved in them are replaced with a single parameter $\al:=\be/(\tau+\be)\in]0,1[$ and one employs the function $t\mapsto\psi(\tau\al t)$ instead of $\psi$.

\if{
\item
In view of Remark~\ref{R2.6}(v), condition {\rm (A)} in Corollary~\ref{C2.6}(i) can be replaced with condition
{\rm (A$^\dag$)}.
With such a replacement, it
strengthens \cite[Theorem~3.1]{ZhaZhe19}.
}\fi
\end{enumerate}
\end{remark}

A special case of Corollary~\ref{C2.6}, which can be of interest, is when the function $\psi$ is defined via another function $\nu:\R_+\to\R_+$ as follows: $\psi(t):=\int_0^t\nu(s)ds$ $(t\ge0)$.

\begin{corollary}\label{C2.10}
Suppose $X$ is a complete metric space, $f:X \rightarrow \mathbb{R}_\infty$ is \lsc, $\bx\in X$, $\nu:\R_+\to\R_+$ is nondecreasing, $\nu(t)>0$ for all $t>0$,
\if{
\todo{Is it needed?}
\NDC{1.9.20 I think this assumption is used to ensure $\varphi(+\infty)=+\infty$ so that $\varphi$ is invertible which is needed in the proof of Theorem~\ref{T2.5}.}
}\fi
$\int_0^{+\infty}\nu(t)dt=+\infty$, ${\de\in]0,+\infty]}$, $\mu\in]0,+\infty]$, $\al\in]0,1]$, $\be:=\max\{\al,1-\al\}$, and either $\bx\in[f\le0]$ or $\de=+\infty$.
Let ${|\breve\nabla f|\in|\mathfrak{D}f|}$.
The error bound inequality
\sloppy
\begin{align*}
\int_0^{d(x,[f\le0])}\nu(t)dt\le f(x)
\end{align*}
holds for all
$x\in B_{\frac{\de}{1+\al}}(\bx)\cap[0<f<\mu]$,
provided that
\begin{gather*}
\al|\breve\nabla f|(u)\ge\nu(\be\iv d(u,[f\le0]))
\end{gather*}
for all $u\in B_\delta(\bx)\cap[0<f<\mu]$ satisfying $f(u)<\int_0^{\be\iv d(u,[f\le0])}\nu(s)ds$.
\end{corollary}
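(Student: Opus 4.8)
The plan is to recognise the statement as the special case of Corollary~\ref{C2.6}(i) obtained by taking
$$\psi(t):=\int_0^t\nu(s)\,ds,\qquad t\ge0.$$
With this choice the error bound inequality \eqref{D1-2} reads $\int_0^{d(x,[f\le0])}\nu(t)\,dt\le f(x)$, the restriction $f(u)<\psi(\be\iv d(u,[f\le0]))$ becomes $f(u)<\int_0^{\be\iv d(u,[f\le0])}\nu(s)\,ds$, and, since $\psi'=\nu$, the main inequality \eqref{C2.10-1} becomes $\al|\breve\nabla f|(u)\ge\nu(\be\iv d(u,[f\le0]))$. The conclusion, the admissible set $x\in B_{\frac{\de}{1+\al}}(\bx)\cap[0<f<\mu]$, the value $\be:=\max\{\al,1-\al\}$, and the slope collection $|\mathfrak{D}f|$ then coincide verbatim with those in the present statement, so it remains only to check that $\psi$ meets the standing hypotheses of Corollary~\ref{C2.6}(i).

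First I would verify $\psi\in\mathcal{C}^1$ and that $\psi$ is convex. Clearly $\psi(0)=0$, and $\psi(t)>0$ for $t>0$ because $\nu$ is positive and nondecreasing on $]0,+\infty[$; moreover $\lim_{t\to+\infty}\psi(t)=\int_0^{+\infty}\nu(t)\,dt=+\infty$, which is exactly the growth condition in the definition of $\mathcal{C}^1$ and guarantees that $\varphi:=\psi\iv$ is defined on all of $\R_+$. The convexity of $\psi$ follows from the monotonicity of $\nu$: for $a<c<b$ one has $\frac{\psi(c)-\psi(a)}{c-a}\le\nu(c)\le\frac{\psi(b)-\psi(c)}{b-c}$, since $\nu(s)\le\nu(c)$ for $s\le c$ and $\nu(s)\ge\nu(c)$ for $s\ge c$, so the difference quotients of $\psi$ are nondecreasing. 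Thus the hypotheses of Corollary~\ref{C2.6}(i) on $\psi$ are in place apart from the differentiability identity $\psi'=\nu$.

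The delicate point, and the step I expect to require the most care, is this very identity. By the fundamental theorem of calculus $\psi$ is differentiable with $\psi'(t)=\nu(t)$ at every continuity point of $\nu$, so if $\nu$ is continuous the verification is complete and Corollary~\ref{C2.6}(i) applies directly. When $\nu$ has jumps (at most countably many), $\psi$ acquires kinks and is no longer everywhere differentiable, so Corollary~\ref{C2.6}(i) cannot be quoted verbatim. I would close this gap by monotone approximation from below: choose continuous nondecreasing $\nu_n$ with $\nu_n(t)>0$ for $t>0$, $\nu_n\le\nu$, $\nu_n\uparrow\nu$ pointwise, and $\int_0^{+\infty}\nu_n(t)\,dt=+\infty$ (the Lipschitz minorants $\nu_n(t):=\inf_{s\ge0}\bigl(\nu(s)+n|t-s|\bigr)$ serve, being positive for $t>0$ and bounded below by $\nu(1)$ for large $t$). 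Since $\nu_n\le\nu$ forces $\psi_n\le\psi$ with $\psi_n(t):=\int_0^t\nu_n(s)\,ds\in\mathcal{C}^1$, the hypothesis $\al|\breve\nabla f|(u)\ge\nu(\be\iv d(u,[f\le0]))$ implies the corresponding one for $\nu_n$ on the smaller set where $f(u)<\psi_n(\be\iv d(u,[f\le0]))$; Corollary~\ref{C2.6}(i) then yields $\psi_n(d(x,[f\le0]))\le f(x)$ for every $n$. Letting $n\to\infty$ and invoking monotone convergence $\psi_n\uparrow\psi$ gives the desired $\psi(d(x,[f\le0]))\le f(x)$, which is exactly the asserted inequality. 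This keeps the entire argument within the machinery already established, with the jump discontinuities of $\nu$ absorbed by a single limiting passage.
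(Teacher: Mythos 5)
Your proof is correct, and its core coincides with the paper's: Corollary~\ref{C2.10} is stated there as a direct specialization of Corollary~\ref{C2.6}(i) under the substitution $\psi(t):=\int_0^t\nu(s)\,ds$, with no further argument supplied. Where you genuinely go beyond the paper is in handling discontinuous $\nu$, and this is not a pedantic addition: the corollary assumes only that $\nu$ is nondecreasing and positive on $]0,+\infty[$, so $\nu$ may have jumps, at which points $\psi$ has distinct one-sided derivatives and therefore fails to belong to $\mathcal{C}^1$; Corollary~\ref{C2.6}(i) then cannot be quoted verbatim, a gap the paper passes over in silence. Your repair via the Pasch--Hausdorff envelopes $\nu_n(t):=\inf_{s\ge0}\bigl(\nu(s)+n|t-s|\bigr)$ is sound: each $\nu_n$ is Lipschitz, nondecreasing, positive on $]0,+\infty[$, satisfies $\nu_n\le\nu$ and $\int_0^{+\infty}\nu_n(t)\,dt=+\infty$, so $\psi_n:=\int_0^{\cdot}\nu_n(s)\,ds$ is a convex member of $\mathcal{C}^1$; the hypothesis transfers to $\psi_n$ because $\psi_n\le\psi$ shrinks the test set while $\nu_n\le\nu$ weakens the required slope inequality; and the conclusion for each $n$ passes to the limit since $\psi_n\uparrow\psi$. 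The one imprecision is your claim that $\nu_n\uparrow\nu$ pointwise: at a jump point where $\nu$ fails to be lower semicontinuous one only gets $\nu_n\to\nu(t^-)<\nu(t)$; but this occurs on an at most countable set, so $\psi_n\uparrow\psi$ (hence your final monotone-convergence step) is unaffected. In short, your proof is the paper's argument made complete, and it establishes the corollary in the full generality in which it is stated.
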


\begin{remark}
With $\de=+\infty$ and $\al=1$, Corollary~\ref{C2.10} with $|\breve\nabla f|=|\nabla f|$ recaptures \cite[Theorem~4.3]{CorMot08} and \cite[Theorem~7.1]{AzeCor17}.
With $\de<+\infty$, $\mu=+\infty$ and $\al=1$, it recaptures \cite[Corollary~4.1]{AzeCor14}.
\end{remark}

\if{
\AK{21/07/20.
I need $\psi$ to be nondecreasing to ensure that $\varphi'$ is nonincreasing.
There is no such assumption in \cite[Theorem~7.1]{AzeCor17}, and they claim this to be important \cite[Remark~7.1]{AzeCor17}.
Can it be dropped?}

\NDC{26.7.20
Let me have a closer look at the proofs to see whether it is possible to drop this condition.}

\NDC{26.7.20
I think we do not have the alternative nonlinear sufficient conditions for perturbed error bounds since the perturbed error bound is characterized by the conventional slope characterization ($\varphi'(f(x))|\nabla f|(x)\ge 1$) which is stronger than the alternative one.}
}\fi
\if{
\begin{theorem}
Suppose $X$ is a complete metric space, $f:X \rightarrow \mathbb{R}_\infty$ is \lsc, $\bx\in[f\le0]$, ${\varphi\in\mathcal{C}^1}$, $\varphi'$ is nonincreasing, $\de\in]0,+\infty]$ and $\mu\in]0,+\infty]$.
The error bound estimate \eqref{P2.4-1}
holds for $f$ at $\bx$, provided that one of the following conditions is satisfied:
\begin{enumerate}
\item
$\varphi'(\varphi\iv(d(x,[f\le0])))|\nabla f|(x)\ge1$
for all $x\in X$ satisfying \eqref{T2.4-2};
\item
$X$ is Banach and $\varphi'(\varphi\iv(d(x,[f\le0])))d(0,\partial^Cf(x))\ge1$
for all $x\in X$ satisfying \eqref{T2.4-2};
\item
$X$ is Asplund and $\varphi'(\varphi\iv(d(x,[f\le0])))d(0,\partial^Ff(x))\ge1$
for all $x\in X$ satisfying \eqref{T2.4-2}.
\end{enumerate}
Moreover, {\rm (iii) \folgt (i), (ii) \folgt (i)} and, if $X$ is Asplund, then {\rm (ii) \folgt (iii)}.
\end{theorem}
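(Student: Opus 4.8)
The plan is to read this theorem as the explicit three-case form of the alternative nonlinear sufficient condition of Theorem~\ref{T2.5}(i), specialised to $\al=1$. With $\al=1$ one has $\max\{\al,1-\al\}=1$, so \eqref{T2.5-1} becomes $\varphi'(\varphi\iv(d(u,[f\le0])))|\breve\nabla f|(u)\ge1$ and its restriction reduces to $\varphi(f(u))<d(u,[f\le0])$; the three displayed conditions~(i), (ii), (iii) are then precisely this inequality with $|\breve\nabla f|$ equal to the three members $|\nabla f|$, $|\sd^Cf|$, $|\sd^Ff|$ of $|\mathfrak{D}f|^\circ$ (available in a complete metric, a Banach, and an Asplund space, respectively). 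Since $\varphi'$ is nonincreasing, $\varphi$ is concave, so Theorem~\ref{T2.5}(i) already applies in each case. I would nonetheless prove sufficiency of~(i) through the conventional Theorem~\ref{T2.4} and then obtain (ii) and (iii) via the hierarchy, which is the more informative route and isolates where each assumption is used.

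For the sufficiency of~(i) I would reduce the alternative coefficient to the conventional one. At an admissible point $x$ the restriction gives $\varphi(f(x))<d(x,[f\le0])$; applying the strictly increasing $\varphi\iv$ yields $f(x)<\varphi\iv(d(x,[f\le0]))$, whence, $\varphi'$ being nonincreasing,
$$\varphi'(f(x))\ge\varphi'(\varphi\iv(d(x,[f\le0]))).$$
Thus condition~(i) forces $\varphi'(f(x))|\nabla f|(x)\ge1$, which is exactly condition~(b) of Theorem~\ref{T2.4}(i) with $|\breve\nabla f|=|\nabla f|$ and $\al=1$; the $\varphi$-error bound follows. (Note that the restriction itself certifies $d(x,[f\le0])>\varphi(f(x))>0$, so the coefficient is well defined and strictly positive.)

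I would then establish the ``Moreover'' implications, which simultaneously settle the sufficiency of~(ii) and~(iii). The implication~(ii)~\folgt~(i) is pointwise: by Lemma~\ref{P1.1}(iv), in a Banach space with $f$ \lsc\ one has $|\nabla f|(x)\ge|\sd^Cf|(x)$, so multiplying by the common positive coefficient $\varphi'(\varphi\iv(d(x,[f\le0])))$ turns~(ii) into~(i). The implication~(ii)~\folgt~(iii), meaningful only once $X$ is Asplund so that~(iii) is defined, is even more direct, since $\partial^Ff(x)\subset\partial^Cf(x)$ gives $|\sd^Ff|(x)\ge|\sd^Cf|(x)$. Combined with the previous paragraph, (ii) and (iii) are sufficient because they imply~(i).

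The one genuinely delicate implication is~(iii)~\folgt~(i), which cannot be argued pointwise: by Lemma~\ref{P1.1}(ii) the \Fr\ slope dominates the descending slope, so the comparison must pass through the fuzzy estimate of Lemma~\ref{P1.1}(v),
$$|\nabla f|(x)\ge\liminf_{u\to x,\,f(u)\to f(x)}|\sd^Ff|(u).$$
Here I would use that the defining restriction is an open condition: the strict inequality $\varphi(f(x))<d(x,[f\le0])$, together with $f(x)\in]0,\mu[$ and $x\in B_\de(\bx)$, persists for all $u$ close enough to $x$ with $f(u)$ close to $f(x)$, so~(iii) is available at such $u$ and yields $|\sd^Ff|(u)\ge1/\varphi'(\varphi\iv(d(u,[f\le0])))$. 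Since $d(\cdot,[f\le0])$ is $1$-Lipschitz and $\varphi\iv,\varphi'$ are continuous on $]0,+\infty[$, this lower bound converges to $1/\varphi'(\varphi\iv(d(x,[f\le0])))$, and passing to the $\liminf$ gives exactly condition~(i). This neighbourhood-and-continuity step is the main obstacle; it is precisely the reason why \Fr\ conditions must be posed on a level-set \nbh\ rather than at a single point, as in Proposition~\ref{T2.1}(i)(b).
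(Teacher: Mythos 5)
Your proof is correct and takes essentially the same route as the paper: the monotonicity reduction $\varphi'(f(x))\ge\varphi'(\varphi\iv(d(x,[f\le0])))$ under the restriction $\varphi(f(x))<d(x,[f\le0])$ is precisely how the paper deduces Theorem~\ref{T2.5}(i) (which subsumes this statement, cf.\ Corollary~\ref{C2.5}) from Theorem~\ref{T2.4}(i), and your implications (ii) \folgt (i), (ii) \folgt (iii) and the fuzzy (iii) \folgt (i) are the same Lemma~\ref{P1.1}(iv)--(v) mechanism the paper relies on throughout (cf.\ Remark~\ref{R2.1--}(ii)). Your openness-and-continuity argument for (iii) \folgt (i) just spells out a detail the paper leaves implicit, and it is sound because the alternative coefficient $\varphi'(\varphi\iv(d(\cdot,[f\le0])))$ is continuous at admissible points, where $d(x,[f\le0])>\varphi(f(x))>0$.
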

}\fi


\section{Subregularity of set-valued mappings} \label{S3.1}

In this section, we illustrate the sufficient and necessary conditions for nonlinear error bounds by applying them to characterizing the nonlinear version of the ubiquitous property of \emph{subregularity} (cf. \cite{DonRoc14,Iof17})  of \SVM s.
Nonlinear subregularity has many important applications and has been a subject of intense study in recent years; cf.
\cite{LiMor12,MorOuy15,Kru15.2,Kru16,ZheZhu16}.

Below, $F: X\rightrightarrows Y$ is a set-valued mapping between metric spaces, and $(\bx,\by)\in\gph F$.
Recall that $X\times Y$ is assumed to be equipped with the
maximum distance.

\begin{definition}\label{D3.1}
Let $\varphi\in\mathcal{C}$.
The mapping $F$ is
\begin{enumerate}
\item
$\varphi-$subregular at $(\bx,\by)$ if there exist  $\delta\in]0,+\infty]$ and $\mu\in]0,+\infty]$ such that \begin{align*}
d(x,F\iv(\by))\le\varphi(d(\by,F(x)))
\end{align*}
for all $x\in B_\de(\bx)$  with $d(\by,F(x))<\mu$;
\item
graph $\varphi-$subregular at $(\bx,\by)$ if there exist $\delta\in]0,+\infty]$ and $\mu\in]0,+\infty]$  such that
\begin{align}\label{D3.2-1}
d(x,F\iv(\by))\le\varphi(d((x,\by),\gph F))
\end{align}
for all  $x\in B_\de(\bx)$ with $d((x,\by),\gph F)<\mu$.
\end{enumerate}
\end{definition}

\begin{remark}
Instead of the standard maximum distance employed in \eqref{D3.2-1}, it is common when studying regularity properties of mappings to consider parametric distances of the type $d_\rho((x,y,(u,v)):=\max\{d(x,u),\rho d(y,v)\}$, where $\rho>0$; cf. \cite{AzeCor04,Aze06,Kru15,Kru15.2,Kru16,Iof17}.
This usually gives an additional degree of freedom and leads to sharper conditions.
We avoid doing it here just for simplicity as our main purpose in this section is to provide some illustrations.
All the conditions below can be easily extended to parametric distances.
\end{remark}

The nonlinear property in part (i) of Definition~\ref{D3.1} is a quite common extension of the conventional subregularity; cf., e.g., \cite{Kru16}, while the property in part (ii) extends the (linear) graph subregularity studied 
in 
\cite{JouThi90}.
In the linear case, the two properties are equivalent.
The next proposition establishes quantitative relations between the properties in the nonlinear setting.

\begin{proposition}\label{P5.1}
Let  $\varphi\in\mathcal{C}$,
$\delta\in]0,+\infty]$ and $\mu\in]0,+\infty]$.
\begin{enumerate}
\item
If $F$ is graph $\varphi-$subregular at $(\bx,\by)$ with $\de$ and $\mu$, then it is $\varphi-$subregular at $(\bx,\by)$ with $\de$ and $\mu$.
\item
Suppose $c\varphi(t)\ge t$ for some $c>0$ and all $t\in]0,3\de/2[$.
If $F$ is $\varphi-$subregular at $(\bx,\by)$ with $\de$ and $\mu$, then it is graph $(c+1)\varphi-$subregular at $(\bx,\by)$ with  $\de':=\min\{\de,\mu\}/2$ and any $\mu'\in]0,+\infty]$.
\end{enumerate}
\end{proposition}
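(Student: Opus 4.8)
The plan is to reduce both parts to two elementary facts about the maximum distance on $X\times Y$. First, for every $x$ one has $d((x,\by),\gph F)\le d(\by,F(x))$: each $v\in F(x)$ gives a point $(x,v)\in\gph F$ with $d((x,\by),(x,v))=\max\{0,d(\by,v)\}=d(\by,v)$, and taking the infimum over $v\in F(x)$ yields the inequality (with the convention $d(\by,\es)=+\infty$ this is trivial when $F(x)=\es$). Second, since $(\bx,\by)\in\gph F$, we have $d((x,\by),\gph F)\le d((x,\by),(\bx,\by))=d(x,\bx)$, so the graph distance is automatically controlled by the distance of $x$ to $\bx$. These two observations carry essentially all the content.

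Part (i) is then immediate. Fix $x\in B_\de(\bx)$ with $d(\by,F(x))<\mu$. By the first fact $d((x,\by),\gph F)\le d(\by,F(x))<\mu$, so graph $\varphi$-subregularity applies at $x$ and gives $d(x,F\iv(\by))\le\varphi(d((x,\by),\gph F))\le\varphi(d(\by,F(x)))$, the last step using that $\varphi$ is nondecreasing. This is exactly $\varphi$-subregularity with the same $\de$ and $\mu$.

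For part (ii), fix $x\in B_{\de'}(\bx)$ with $r:=d((x,\by),\gph F)<\mu'$; by the second fact $r\le d(x,\bx)<\de'$, so $r<\de'=\min\{\de,\mu\}/2$. Pick $\varepsilon\in\,]0,\de'-r[$ and $(u,v)\in\gph F$ with $\max\{d(x,u),d(\by,v)\}<r+\varepsilon$. The decisive bookkeeping is that $u$ falls in the region where the assumed $\varphi$-subregularity is available: $d(u,\bx)\le d(u,x)+d(x,\bx)<(r+\varepsilon)+\de'<2\de'=\min\{\de,\mu\}\le\de$, hence $u\in B_\de(\bx)$, and $d(\by,F(u))\le d(\by,v)<r+\varepsilon<\de'\le\mu/2<\mu$. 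Thus $\varphi$-subregularity at $u$ gives $d(u,F\iv(\by))\le\varphi(d(\by,F(u)))$, and the triangle inequality yields
\[
d(x,F\iv(\by))\le d(x,u)+d(u,F\iv(\by))\le d(x,u)+\varphi(d(\by,F(u))).
\]
Since $d(x,u)<r+\varepsilon$ and $d(\by,F(u))<r+\varepsilon$, monotonicity of $\varphi$ bounds the right-hand side by $(r+\varepsilon)+\varphi(r+\varepsilon)$. Finally the hypothesis $c\varphi(t)\ge t$ on $]0,3\de/2[$, applied at $t=r+\varepsilon<\de'<3\de/2$, converts the stray distance term: $(r+\varepsilon)+\varphi(r+\varepsilon)\le c\varphi(r+\varepsilon)+\varphi(r+\varepsilon)=(c+1)\varphi(r+\varepsilon)$. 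Letting $\varepsilon\downarrow0$ gives $d(x,F\iv(\by))\le(c+1)\varphi(r)$, i.e. graph $(c+1)\varphi$-subregularity with radius $\de'$ and arbitrary $\mu'$.

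The main obstacle is the radius/domain bookkeeping in part (ii): the point $u$ is only an $\varepsilon$-approximate projection of $(x,\by)$ onto $\gph F$, and one must guarantee \emph{both} $u\in B_\de(\bx)$ and $d(\by,F(u))<\mu$ in order to invoke $\varphi$-subregularity there. This is precisely what forces the halved radius $\de'=\min\{\de,\mu\}/2$ and makes essential use of $(\bx,\by)\in\gph F$ (through the second fact bounding $r$ by $d(x,\bx)$). A secondary, routine subtlety is the passage $\varepsilon\downarrow0$ in $(c+1)\varphi(r+\varepsilon)$, which relies on monotonicity and right-continuity of $\varphi$; if the infimum defining $d((x,\by),\gph F)$ is attained (for instance when $\gph F$ is closed) the parameter $\varepsilon$ can be dropped entirely and this point vanishes.
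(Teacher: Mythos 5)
Your proof is correct and follows essentially the same route as the paper's: part (i) rests on the elementary inequality $d((x,\by),\gph F)\le d(\by,F(x))$, and part (ii) on passing to near-optimal points of $\gph F$ (the paper localizes the infimum to $\gph F\cap B_{2\de'}(\bx,\by)$ and takes the infimum at the end, where you use an $\varepsilon$-approximate minimizer and let $\varepsilon\downarrow0$ — the same bookkeeping), the triangle inequality, and absorption of the stray distance term via $c\varphi(t)\ge t$. The monotonicity/right-continuity caveat you flag applies equally to the paper's own argument (its final ``taking infimum'' step needs it too), and your variant in fact only invokes $c\varphi(t)\ge t$ on $]0,\de'[$ rather than on all of $]0,3\de/2[$, since you bound $d(x,u)$ by $r+\varepsilon$ instead of $3\de'$.
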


\begin{proof}
\begin{enumerate}
\item
Suppose $F$ is graph $\varphi-$subregular at $(\bx,\by)$ with $\de$ and $\mu$.
Let $x\in B_{\delta}(\bx)$ with $d(\by,F(x))<\mu$.
Then
$d((x,\by),\gph F)\le d(\by,F(x))$.
Thus, $d((x,\by),\gph F)<\mu$ and, in view of Definition~\ref{D3.1}(ii),
$d(x,F\iv(\by))\le\varphi(d((x,\by),\gph F))\le \varphi(d(\by,F(x)))$.
Hence, $F$ is $\varphi-$subregular at $(\bx,\by)$ with $\de$ and $\mu$.

\item
Suppose $F$ is $\varphi-$subregular at $(\bx,\by)$ with $\de$ and $\mu$.
Let $x \in B_{\delta'}(\bx)$, where
${\de':=\min\{\de,\mu\}/2}$.
Observe that $d((x,\by),\gph F)\le d(x,\bx)$ and, if $d((u,v),(x,\by))\le d(x,\bx)$, then
$d((u,v),(\bx,\by))\le 2 d(x,\bx)<2\de'$.
Hence,
\begin{align}\label{P5.1P1}
	d((x,\by),\gph F)=\inf_{(u,v)\in\gph F\cap B_{2\de'}(\bx,\by)}
d((x,\by),(u,v)).
\end{align}
For any $(u,v)\in\gph F\cap B_{2\de'}(\bx,\by)$, we have $u\in B_{\de}(\bx)$, and
$d(\by,F(u))\le d(\by,v)<2\de'=\min\{\de,\mu\}$.
Thus, by Definition~\ref{D3.1}(i) and the monotonicity of $\varphi$,
\sloppy
\begin{align*}
	d(u,F\iv(\by))\le\varphi(d(\by,F(u)))\le\varphi(d(\by,v)).
\end{align*}
Moreover, $d(x,u)<3\de'\le 3\de/2$,
and consequently, in view of the assumption on $\varphi$,
\begin{align*}
d(x,F\iv(\by))
&\le d(x,u)+d(u,F\iv(\by))
\le d(x,u)+\varphi(d(\by,v))\\
&\le c\varphi(d(x,u))+\varphi(d(\bar y,v))
\le (c+1)\varphi(d((x,\by),(u,v))).
\end{align*}
Taking infimum over $(u,v)\in\gph F\cap B_{2\de'}(\bx,\by)$ and using \eqref{P5.1P1}, we arrive at
$d(x,F\iv(\by))\le (c+1)\varphi(d((x,\by),\gph F))$.
Hence, $F$ is graph $(c+1)\varphi-$subregular at $(\bx,\by)$ with $\delta'$ and any ${\mu'\in]0,+\infty]}$.
\end{enumerate}
\end{proof}

Observe that graph subregularity of $F$ in Definition~\ref{D3.1}(ii) is precisely the error bound property
of the Lipschitz continuous function $x\mapsto d((x,\by),\gph F)$.
When applied to this function, formulas \eqref{slopes} and \eqref{sld}
lead to the following definitions of
the primal and dual slopes of the \SVM\ $F$ at $\bx$:
\begin{gather*}
|\nabla F|^\diamond(x):=\sup\limits_{u\ne x}
\dfrac{ [ d((x,\by), \gph F)- d((u,\by), \gph F)]_+}{d(u,x)},\\
|\nabla F|(x):=\limsup_{u\rightarrow x,\,u\ne x}\dfrac{ [ d((x,\by), \gph F)- d((u,\by), \gph F)]_+}{d(u,x)},
\\
|\sd^C F|(x):=d(0,\partial^Cd((\cdot,\by),\gph F)(x)),\quad
|\sd^F F|(x):=d(0,\partial^Fd((\cdot,\by),\gph F)(x)).
\end{gather*}
Note that they differ from the corresponding slopes used in \cite{Kru15,Kru15.2,Kru16}.
We use below the collections of
slope operators of $F$ (being realizations of the corresponding collections of $f$) defined recursively as follows:
\begin{enumerate}
\item
$|\mathfrak{D}F|^\circ:=\{|\nabla F|\}$;
\item
if $X$ is Banach, then
$|\mathfrak{D}F|^\circ :=|\mathfrak{D}F|^\circ\cup\{|\sd^C F|\}$;
\item
if $X$ is Asplund, then
$|\mathfrak{D}F|^\circ:=|\mathfrak{D}F|^\circ\cup\{|\sd^F F|\}$;
\item
${|\mathfrak{D}F|}:=
|\mathfrak{D}F|^\circ\cup\{|\nabla F|^\diamond\}$;
\item
${|\mathfrak{D}F|^\dag}:=
|\mathfrak{D}F|\setminus\{|\sd^F F|\}$.
\end{enumerate}

The next two statements are consequences of Theorems~\ref{T2.4} and \ref{T2.5}, respectively.
Their first parts extend \cite[Theorem~2.4]{Jou00} and \cite[Theorem~2.53]{Iof17}.

\begin{proposition}\label{P3.4}
Suppose $X$ and $Y$ are metric spaces, ${\varphi\in\mathcal{C}^1}$, $\de\in]0,+\infty]$ and $\mu\in]0,+\infty]$.
\begin{enumerate}
\item
Let $X$ be complete and $\al\in]0,1]$.
The mapping $F$ is graph $\varphi-$subregular at $(\bx,\by)$ with $\delta':=\frac{\de}{1+\al}$ and $\mu$,
provided that one of the following conditions is satisfied:
\begin{enumerate}
\item
$\al|\nabla(\varphi\circ d((\cdot,\by),\gph F))|^\diamond(u)\ge1$
for all $u\in B_\delta(\bx)\setminus F\iv(\by)$ with $d((u,\by),\gph F)<\mu$ satisfying
\begin{align}\label{C3.3-1}
\max\{\al,1-\al\}\varphi(d((u,\by),\gph F))<d(u,F\iv(y));
\end{align}
\item
$|\breve\nabla F|\in|\mathfrak{D}F|^\circ$ and $\al\varphi'(d((u,\by),\gph F))|\breve\nabla F|(u)\ge1$
for all $u\in B_\delta(\bx)\setminus F\iv(\by)$ with $d((u,\by),\gph F)<\mu$ satisfying condition~\eqref{C3.3-1}.
\end{enumerate}
\sloppy
If $\varphi'$ is nonincreasing, then $|\mathfrak{D}F|^\circ$ in {\rm (b)} can be replaced with $|\mathfrak{D}F|$.
\item
If $F$ is $\varphi-$graph regular at $(\bx,\by)$ with $\delta$ and $\mu$, then
$|{\nabla(\varphi\circ d((\cdot,\by),\gph F))}|^\diamond(u)\ge1$ for all $u\in B_\de(\bx)\setminus F\iv(\by)$ with $d((u,\by),\gph F)<\mu$.
\item
Let $X$ and $Y$ be normed spaces, and $\gph F$ be convex.
If $F$ is graph $\varphi-$subregular at $(\bx,\by)$ with $\delta$ and $\mu$, then
$\frac{\varphi(d((u,\by),\gph F))}{d((u,\by),\gph F)}|\partial F|(u)\ge1$ for all $u\in B_\de(\bx)\setminus F\iv(\by)$ with $d((u,\by),\gph F)<\mu$.\\
If, moreover, $\varphi'$ is nondecreasing, particularly if $\varphi$ is convex, then
$\varphi'(d((u,\by),\gph F))|\partial F|(u)\ge1$ for all $u\in B_\de(\bx)\setminus F\iv(\by)$ with $d((u,\by),\gph F)<\mu$.
\end{enumerate}
\end{proposition}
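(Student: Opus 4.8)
The plan is to reduce all three assertions to Theorem~\ref{T2.4} applied to the scalar function $f:=d((\cdot,\by),\gph F):X\to\R_+$. First I would record the elementary facts that make this reduction legitimate. Since $x\mapsto(x,\by)$ is an isometric embedding of $X$ into $X\times Y$ equipped with the maximum distance, the function $f$ is nonnegative and $1$-Lipschitz, hence continuous and, in particular, \lsc; moreover $[f\le0]=\{x\mid(x,\by)\in\gph F\}=F\iv(\by)$, so that $d(x,[f\le0])=d(x,F\iv(\by))$. Consequently, the $\varphi-$error bound inequality \eqref{D1-1} for $f$ coincides \emph{verbatim} with the graph $\varphi-$subregularity inequality \eqref{D3.2-1}, and the two properties are the same (with identical $\de$ and $\mu$). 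Because $(\bx,\by)\in\gph F$, we have $f(\bx)=0$, i.e. $\bx\in[f\le0]$, which supplies the standing hypothesis ``$\bx\in[f\le0]$'' required by Theorem~\ref{T2.4}(i).

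Next I would set up the dictionary between the slope operators of $F$ and those of $f$. Since $f\ge0$ forces $f_+=f$, the slopes of $F$ defined just before the proposition are literally the slopes of $f$: $|\nabla F|^\diamond=|\nabla f|^\diamond$, $|\nabla F|=|\nabla f|$, $|\sd^C F|=|\sd^C f|$ and $|\sd^F F|=|\sd^F f|$, whence the collections $|\mathfrak{D}F|^\circ$ and $|\mathfrak{D}F|$ coincide with $|\mathfrak{D}f|^\circ$ and $|\mathfrak{D}f|$. Under this dictionary the domain restriction ``$u\in B_\de(\bx)\setminus F\iv(\by)$ with $d((u,\by),\gph F)<\mu$'' becomes ``$u\in B_\de(\bx)\cap[0<f<\mu]$'', the auxiliary inequality \eqref{C3.3-1} becomes \eqref{T2.4-1}, and the slope inequalities in (a) and (b) match those in Theorem~\ref{T2.4}(i)(a) and (b) word for word.

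With these identifications in hand, assertions (i) and (ii) are immediate instances of Theorem~\ref{T2.4}(i) and (ii) (the clause about $\varphi'$ nonincreasing being inherited directly). For assertion (iii) I would additionally use that, when $\gph F$ is convex, the distance $d(\cdot,\gph F)$ is a convex function on the normed space $X\times Y$; precomposing it with the affine map $x\mapsto(x,\by)$ shows that $f$ is convex, so Theorem~\ref{T2.4}(iii) applies, and via $|\partial F|=|\partial f|$ it yields both displayed inequalities (the second one under $\varphi'$ nondecreasing). There is no genuine obstacle here: the argument is essentially a change of notation, and I expect the only points demanding care to be the two verifications that underpin the dictionary — that $[f\le0]=F\iv(\by)$ and that $f$ inherits convexity from $\gph F$ — together with the bookkeeping that the parameter restrictions and \eqref{C3.3-1} translate exactly into those of Theorem~\ref{T2.4}. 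Once these are checked, invoking Theorem~\ref{T2.4} settles all three parts.
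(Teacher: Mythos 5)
Your proposal is essentially the paper's own proof: the paper presents Proposition~\ref{P3.4} exactly as a consequence of Theorem~\ref{T2.4}, via the observation that graph $\varphi-$subregularity of $F$ is the $\varphi-$error bound property of the Lipschitz (hence \lsc) function $f:=d((\cdot,\by),\gph F)$, together with the fact that $|\nabla F|^\diamond$, $|\nabla F|$, $|\sd^C F|$, $|\sd^F F|$ are by definition the corresponding slopes of this $f$, and (for part (iii)) that $f$ is convex when $\gph F$ is convex. Your write-up fills in the same dictionary, so it matches the paper's argument; the only point worth noting is that the identification $[f\le0]=F\iv(\by)$, which you assert and the paper uses tacitly, actually requires $\{x\mid (x,\by)\in\cl\gph F\}=F\iv(\by)$ (e.g. $\gph F$ closed), a caveat shared by both treatments.
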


\begin{proposition}\label{P3.5}
Suppose $X$ and $Y$ are metric spaces, ${\varphi\in\mathcal{C}^1}$, $\de\in]0,+\infty]$ and $\mu\in]0,+\infty]$.
\begin{enumerate}
\item
Let $X$ be complete, $\varphi'$ be nonincreasing, and ${\al\in]0,1]}$.
Let $|\breve\nabla F|\in|\mathfrak{D}F|$.
The mapping $F$ is graph $\varphi-$subregular at $(\bx,\by)$ with $\delta':=\frac{\de}{1+\al}$ and $\mu$,
provided that
\begin{gather*}
\al\varphi'(\varphi\iv((\max\{\al,1-\al\})\iv d(u, F\iv(\by))))|\breve\nabla F|(u)\ge1
\end{gather*}
for all $u\in B_\delta(\bx)\setminus F\iv(\by)$ with $d((u,\by),\gph F)<\mu$ satisfying condition~\eqref{C3.3-1}.
\item
Let $X$ and $Y$ be normed spaces, and $\gph F$ be convex.
If $F$ is graph $\varphi-$subregular at $(\bx,\by)$ with $\delta$ and $\mu$, then
$\frac{d(u,F\iv(\by))}{\varphi\iv(d(u,F\iv(\by)))}|\partial F|(u)\ge1$ for all $u\in B_\de(\bx)\setminus F\iv(\by)$ with $d((u,\by),\gph F)<\mu$.\\
If, moreover, $\varphi'$ is nondecreasing, particularly if $\varphi$ is convex, then\\
$\varphi'(\varphi\iv(d(u,F\iv(\by))))|\partial F|(u)\ge1$ for all $u\in B_\de(\bx)\setminus F\iv(\by)$ with $d((u,\by),\gph F)<\mu$.
\end{enumerate}
\end{proposition}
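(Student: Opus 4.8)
The plan is to derive both assertions directly from Theorem~\ref{T2.5} applied to the Lipschitz continuous function $f:=d((\cdot,\by),\gph F)$, exactly as recorded before the statement: graph $\varphi-$subregularity of $F$ at $(\bx,\by)$ is nothing but the $\varphi-$error bound property of $f$ at $\bx$. First I would set up the dictionary between the two settings. Since $(x,\by)\in\gph F$ iff $\by\in F(x)$, reading $d((x,\by),\gph F)=0$ as $(x,\by)\in\gph F$ gives $[f\le0]=\{x\mid d((x,\by),\gph F)=0\}=F\iv(\by)$, whence $d(x,[f\le0])=d(x,F\iv(\by))$; the standing assumption $(\bx,\by)\in\gph F$ yields $\bx\in[f\le0]$. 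By construction the slope operators $|\nabla F|^\diamond,|\nabla F|,|\sd^CF|,|\sd^FF|$ and hence the collections $|\mathfrak{D}F|^\circ,|\mathfrak{D}F|$ are precisely the corresponding slopes and collections of $f$, so $|\breve\nabla F|\in|\mathfrak{D}F|$ reads $|\breve\nabla f|\in|\mathfrak{D}f|$. Finally, with $f(u)=d((u,\by),\gph F)$ and $d(u,[f\le0])=d(u,F\iv(\by))$, condition \eqref{C3.3-1} is exactly \eqref{T2.4-1} for $f$, and the displayed slope inequality in part (i) is exactly \eqref{T2.5-1} for $f$.

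For part (i), I would verify that the hypotheses of Theorem~\ref{T2.5}(i) hold for $f$: $X$ is complete, $f$ is Lipschitz (hence lower semicontinuous), $\varphi$ is concave since $\varphi\in\mathcal{C}^1$ with $\varphi'$ nonincreasing, $\al\in]0,1]$, and $\bx\in[f\le0]$. Theorem~\ref{T2.5}(i) then gives that $f$ admits a $\varphi-$error bound at $\bx$ with $\de':=\frac{\de}{1+\al}$ and $\mu$, i.e. $d(x,F\iv(\by))\le\varphi(d((x,\by),\gph F))$ for all $x\in B_{\de'}(\bx)$ with $0<d((x,\by),\gph F)<\mu$; the remaining case $d((x,\by),\gph F)=0$ is trivial because then $x\in F\iv(\by)$ and both sides vanish. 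This is precisely graph $\varphi-$subregularity of $F$, as required.

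For part (ii), the one genuine verification is convexity of $f$: when $\gph F$ is convex, the full distance $(x,y)\mapsto d((x,y),\gph F)$ is convex on $X\times Y$, and restricting to the affine slice $y=\by$ preserves convexity, so $f$ is convex. With $X$ normed and $|\partial F|(u)=|\partial f|(u)$, graph $\varphi-$subregularity yields the $\varphi-$error bound of $f$, so Theorem~\ref{T2.5}(ii) delivers $\frac{d(u,[f\le0])}{\varphi\iv(d(u,[f\le0]))}|\partial f|(u)\ge1$, which is the first claimed inequality after substituting $[f\le0]=F\iv(\by)$ and $|\partial f|=|\partial F|$. The ``moreover'' part follows from the second conclusion of Theorem~\ref{T2.5}(ii), noting that for $\varphi\in\mathcal{C}^1$ the hypothesis that $\varphi'$ is nondecreasing coincides with $\varphi$ being convex.

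I do not expect a serious obstacle: the whole argument is a transcription of Theorem~\ref{T2.5} through the identification $f=d((\cdot,\by),\gph F)$, and the heavy lifting (the \EVP\ and the sum/chain rules) is already absorbed in that theorem. The only points requiring a little care are the set identity $[f\le0]=F\iv(\by)$, which rests on identifying the zero set of the sliced distance with $F\iv(\by)$, and the convexity of the sliced distance function needed in part (ii); both are routine.
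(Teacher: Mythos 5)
Your proof is correct and follows exactly the paper's route: the paper obtains Proposition~\ref{P3.5} as an immediate consequence of Theorem~\ref{T2.5} via precisely your dictionary — graph $\varphi-$subregularity of $F$ at $(\bx,\by)$ is the $\varphi-$error bound property of the Lipschitz function $f:=d((\cdot,\by),\gph F)$, whose slopes are by definition the slope operators of $F$, with $[f\le0]=F\iv(\by)$, condition \eqref{C3.3-1} becoming \eqref{T2.4-1}, and the displayed inequality becoming \eqref{T2.5-1}. The one caveat, that reading $d((x,\by),\gph F)=0$ as $x\in F\iv(\by)$ tacitly requires $\gph F$ to be (locally) closed, is equally implicit in the paper's own identification, so your transcription matches the intended argument.
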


\if{
\red{
\begin{remark}
The sufficient conditions in Propositions~\ref{P3.4} and \ref{P3.5} improve \cite[Theorem~2.53]{Iof17} and \cite[Theorem~2.4]{Jou00} and the linear setting.
\end{remark}
}

\AK{22/11/20.
The subsection is not finished.

1) $f$ should be excluded from the statements, i.e. the slopes need to be reformulated in terms of $F$.

2) Can the resulting `slopes' of $F$ be compared with those used in \cite{Kru15,Kru15.2,Kru16}?

3) Should a parametric norm be used on $X\times Y$?

4) A comparison with the existing results is missing.
}

\NDC{23.11.20
2) In \cite{Kru15,Kru15.2,Kru16}, the subregularity of $F$ is studied by using the slope of the function $(x,y)\mapsto d(x,\by) +i_{\gph F}(x,y)$.
In my understanding, using this function (instead of the original one, i.e. $x\mapsto d(\by,F(x))$) has two advantages.
First, this function is lower semicontinuous when $\gph F$ is closed. Second, it allows us to compute the subdifferentials explicitly thanks to the subdifferential presentations of the distance function and the indicator function.
Of course, we can study the graph subregularity by employing the function $(x,u,v)\mapsto d((x,\by),(u,v))+i_{\gph F}(u,v)$.
However, I think our aim here is to take advantage of the continuity of the function $x\mapsto d((x,\by),\gph F)$.
As a consequence, I think the slopes (of the function of one variable) here cannot be compared with the ones  (of the function of two variables) in \cite{Kru15,Kru15.2,Kru16}.

3) Historically, Jourani and Thibault \cite{JouThi90} used the usual distance in the product space when defining the property;
Ioffe \cite[Definition~2.60]{Iof17} defined the graph regularity by using the parametric distance, but he did not introduce the modulus (constant $\al$) in the definition of the property.
I think there is no issue with putting a parameter to the distance function in Definition~\ref{D3.1}(ii).
In the initial version, I put a parameter in, but then I take it out since I think we will not use this parameter.}
}\fi

\section{Convex semi-infinite optimization} \label{S6}

In this section, we
consider a canonically perturbed convex semi-infinite optimization problem:
\begin{equation*}
\begin{aligned}
P(c,b):\quad& \text{minimize}
&&\psi(x)+\langle c,
x\rangle\\
&\text{subject to} &&g_t(x)\leq b_t,\;t\in T,
\end{aligned}
\end{equation*}
where $x\in\R^n$ is the vector of variables, $c\in\R^n$, $\langle \cdot,\cdot\rangle$ represents the usual inner product in $\R^n$, $T$ is a proper compact subset of a metric space $Z$,
$\psi:\R^{n}\rightarrow\R$
and $g_t:\R^n\rightarrow \R$ $(t\in T)$ are convex functions, $(t,x)\mapsto g_t(x)$ is assumed to be lower semicontinuous on $ T\times\R^n$, and $b\in C(T,\R)$ (the space of continuous functions from $T$ to $\R$).
In this setting, the pair
$(c,b)\in \R^{n}\times C(T,\R)$ is regarded as the parameter to be perturbed.
The norm in this parameter space is given by
$\|(c,b)\|:=\max\{\|c\|,\|b\|_{\infty}\},$
where $\|\cdot\|$ is any given norm in $\R^n$  and $\|b\|_{\infty}:=\max_{t\in T}|b_{t}|$.

The \emph{solution},
\emph{feasible set} and \textit{level set} mappings corresponding to the above problem are the set-valued mappings
defined, respectively, by
\begin{gather}\label{S}
\mathcal{S}(c,b):=\{x\in\R^{n}\mid x\;\;\mathrm{solves}\;\;P(c,b)\},\quad
(c,b)\in \R^{n}\times C(T,\R),
\\\notag
\mathcal{F}(b):=\{x\in \R^{n}\mid g_{t}(x)\leq b_{t},\;t\in T\},\quad
b\in C(T,\R),
\\\label{L}
\mathcal{L}(\alpha ,b):=\{x\in \mathcal{F}(b)\mid \psi(x)+\langle \bar{c},x\rangle \leq \alpha\},
\quad (\al,b)\in\R\times C(T,\R).
\end{gather}
If $c:=\bar c$ in \eqref{S} is fixed, then $\mathcal{S}$ reduces to the partial solution mapping $\mathcal{S}_{\bar c}:C(T,\R)\rightrightarrows
\R^{n}$ given by $\mathcal{S}_{\bar c}(b)=\mathcal{S}(\bar c,b)$.

Our goal in this section is to use nonlinear error bound conditions for analyzing nonlinear \emph{calmness} of the mappings $\mathcal{S}$, $\mathcal{S}_{\bar c}$ and $\mathcal{L}$.

\begin{definition}\label{D3.3}
Let $F: Y\rightrightarrows X$ be a set-valued mapping between metric spaces, $(\by,\bx)\in\gph F$, and $ \varphi\in\mathcal{C}$.
The mapping $F$ is $\varphi-$calm
at $(\by,\bx)$ if there exist $\de\in]0,+\infty]$ and
$\mu\in]0,+\infty]$ such that
\begin{align}\label{D4.3-1}
d(x,F(\by))\le\varphi(d(y,\by))
\end{align}
for all $y\in Y$ with $d(y,\by)<\mu$ and $x\in F(y)\cap B_\de(\bx)$.
\end{definition}

Calmness of set-valued mappings plays an important
role in optimization theory, cf. \cite{DonRoc14,Iof17}.
It is easy to check that
$F$ is $\varphi-$calm at $(\by,\bx)\in\gph F$ with some $\de$ and $\mu$ if and only if $F\iv$ is  $\varphi-$subregular at  $(\bx,\by)$ with the same $\de$ and $\mu.$

C\'anovas et al. \cite{CanKruLopParThe14} examined the problem $P(c,b)$ in the particular case when $g_t$ $(t\in T)$ are linear functions and $\psi\equiv0$, and obtained
estimates for the calmness modulus.
In \cite{CanHanLopPar08}, C\'anovas et al. studied the modulus of metric regularity of the solution mapping.
Kruger et al. \cite{KruLopYanZhu19} established characterizations of H\"older calmness of the solution mapping by employing the error bound theory.
Motivated by the latter paper, we
establish characterizations of the calmness in the nonlinear setting.

From now on, we assume a point $((\bar{c},\bar{b}),\bar{x})\in\gph\mathcal{S}$ to be given.
Obviously, if
$\mathcal{S}$ is $\varphi-$calm at $((\bar{c},\bar{b}),\bar{x})$ for some $\varphi\in\mathcal{C}$, then
$\mathcal{S}_{\bar{c}}$ is $\varphi-$calm at $(\bar{b},\bar{x})$.
We are going to employ
the following \lsc\ convex function:
\begin{gather}\label{barf}
{f}(x):=\max\{\psi(x)-\psi(\bar{x})+\langle \bar{c},x-\bar{x}\rangle,\;\sup_{t\in T}(g_{t}(x)-\bar{b}_t)\},\quad x\in\R^n.
\end{gather}
Observe that
\begin{gather}\label{marco6}
\mathcal{S}(\bar{c},\bar{b})=\left[{f}=0\right]=\left[{f}
\leq0\right]  =\mathcal{L}(\psi(\bar{x})+\langle\bar{c},\bar{x}\rangle,\bar{b}),
\\\notag
{f}_{+}(x)
=d((\psi(\bar{x})+\langle\bar{c},\bar{x}\rangle,\bar{b}), \mathcal{L}^{-1}(x))\qdtx{for all} x\in\R^n.
\end{gather}
As a consequence, we have the following statement.

\begin{proposition}\label{P6.9}
Let $\varphi\in\mathcal{C}$.
The mapping $\mathcal{L}$ is $\varphi-$calm at $(\bx,(\psi(\bx)+\langle\bar c,\bx\rangle,\bar b))$ with some $\de\in]0,+\infty]$ and
$\mu\in]0,+\infty]$ if and only if $f$ admits a $\varphi-$error bound at $\bx$ with the same $\de$ and $\mu$.
\end{proposition}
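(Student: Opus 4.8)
The plan is to reduce the assertion to the two identities assembled in \eqref{marco6}, so that essentially no further analysis of the problem $P(c,b)$ is needed. Write $\by:=(\psi(\bx)+\langle\bar c,\bx\rangle,\bar b)$ for the distinguished parameter. First I would invoke the equivalence between $\varphi$-calmness and $\varphi$-subregularity of the inverse recorded right after Definition~\ref{D3.3}: $\mathcal{L}$ is $\varphi$-calm at the graph point determined by $\by$ and $\bx$ (with some $\de$ and $\mu$) if and only if $\mathcal{L}^{-1}:\R^n\rightrightarrows\R\times C(T,\R)$ is $\varphi$-subregular at $(\bx,\by)$ (with the same $\de$ and $\mu$). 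Unfolding Definition~\ref{D3.1}(i) for $\mathcal{L}^{-1}$ at $(\bx,\by)$ and using $(\mathcal{L}^{-1})^{-1}=\mathcal{L}$, this subregularity says that there exist $\de$ and $\mu$ such that
\[
d(x,\mathcal{L}(\by))\le\varphi\big(d(\by,\mathcal{L}^{-1}(x))\big)
\]
for every $x\in B_\de(\bx)$ with $d(\by,\mathcal{L}^{-1}(x))<\mu$.

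Next I would substitute the two identities from \eqref{marco6}, namely $\mathcal{L}(\by)=\mathcal{S}(\bar c,\bar b)=[f\le0]$ and $d(\by,\mathcal{L}^{-1}(x))=f_+(x)$, turning the displayed inequality into
\[
d(x,[f\le0])\le\varphi\big(f_+(x)\big),
\]
required to hold for all $x\in B_\de(\bx)$ with $f_+(x)<\mu$. It then remains to match this with the $\varphi$-error bound of $f$ at $\bx$ from Definition~\ref{D1}. Note first that, since $((\bar c,\bar b),\bx)\in\gph\mathcal{S}$ and $\mathcal{S}(\bar c,\bar b)=[f\le0]$ by \eqref{marco6}, we have $\bx\in[f\le0]$; hence the side requirement ``$\bx\in[f\le0]$ or $\de=+\infty$'' in Definition~\ref{D1} is automatically satisfied and plays no role.

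The equivalence of the two inequality statements I would settle by a sign analysis on $\{x:f_+(x)<\mu\}=[f<\mu]$. On $[0<f<\mu]$ one has $f_+=f$, so there the subregularity inequality is verbatim the error bound inequality $d(x,[f\le0])\le\varphi(f(x))$; this already yields the implication from calmness/subregularity to the error bound (just restrict the quantifier). Conversely, a point $x$ with $f_+(x)<\mu$ either lies in $[0<f<\mu]$, where the error bound inequality applies directly, or satisfies $f(x)\le0$, in which case $d(x,[f\le0])=0$ and $\varphi(f_+(x))=\varphi(0)=0$ because $\varphi\in\mathcal{C}$, so the subregularity inequality holds trivially. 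Thus the two properties hold with identical $\de$ and $\mu$, which is the claim.

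I do not expect a genuine obstacle: all the substantive content is precompiled in \eqref{marco6}, whose verification amounts to computing $d(\by,\mathcal{L}^{-1}(x))$ in the maximum norm by separately minimizing over the $\al$- and $b$-components subject to the feasibility constraints defining $\mathcal{L}^{-1}(x)$, each contributing a positive-part term that reassembles into $f_+(x)$. The only point needing care is the bookkeeping of thresholds, namely that the condition $f_+(x)<\mu$ and the set $[0<f<\mu]$ carry the same $\mu$, and that the degenerate case $f(x)\le0$ is absorbed by $\varphi(0)=0$, so that the two properties share the very same $\de$ and $\mu$ rather than merely comparable ones.
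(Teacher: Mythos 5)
Your proposal is correct and follows essentially the same route as the paper: Proposition~\ref{P6.9} is stated there as an immediate consequence of the identities \eqref{marco6} combined with the calmness--subregularity correspondence recorded after Definition~\ref{D3.3}, which is exactly the chain you spell out. Your additional bookkeeping (the case split $f(x)\le 0$ versus $x\in[0<f<\mu]$, the observation that $\bx\in[f\le0]$ disposes of the side condition in Definition~\ref{D1}, and the matching of the same $\de$ and $\mu$) is precisely the routine verification the paper leaves implicit.
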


The assertions in the next proposition are extracted from \cite[Proposition~4.5 \& Theorem~4.7]{KruLopYanZhu19} and their proofs.
The set of \textit{active indices} at $x\in \mathcal{F}(b)$ is defined by
$T_{b}(x):=\{{t\in T}\mid g_{t}(x)=b_{t}\}.$
The problem $P(c,b)$ satisfies the \textit{Slater condition} if
there exists an $\hat{x}\in\R^{n}$ such that $g_{t}(\hat{x})<b_{t}$ for all $t\in T$.

\begin{proposition}\label{P4.6}
Let $P(\bar{c},\bar{b})$ satisfy the Slater condition.
\begin{enumerate}
\item
There exist $\de>0$, $\mu>0$ and $M>0$ such that
\begin{align}\label{P4.6-1}
\psi(x)-\psi(\bar{x})+\langle \bar{c},x-\bar{x}\rangle \leq M\|(c,b)-(\bar{c},\bar{b})\|
\end{align}
for all $(c,b)\in B_{\mu}(\bar c,\bar b)$ and $x\in \mathcal{S}(c,b)\cap B_\de(\bx)
$.
\item
If $x^{n}\to\bar{x}$ with ${f}(x^{n})\downarrow0$,
then
\begin{enumerate}
\item
there exists
a sequence $\{b^{n}\}_{n\in\mathbb{N}}\subset C(T,\R)$
such that
$x^{n}\in\mathcal{F}(b^{n})$ and
$\|b^n-\bar b\|_{\infty}\le Nf(x^n)$
for some $N>0$ and all $n\in\N$;

\item
there exist
a finite subset $T_{0}\subset\cap_{n\in\N}T_{b^{n}}(x^{n})$, and $\gamma _{t}>0$, $u_{t}\in \partial g_{t}(\bar{x})$ $(t\in T_{0})$ and $u\in \partial\psi(\bar{x})$ such that
$-(\bar{c}+u)\in\sum_{t\in T_0}\gamma_{t}u_{t}$.
\end{enumerate}
\end{enumerate}
\end{proposition}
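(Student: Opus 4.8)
The plan is to prove the two parts separately, using the Slater condition as the constraint qualification that drives both.

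\emph{Part (i).} I would build a feasible competitor for the perturbed problem $P(c,b)$ sitting near $\bx$ and compare objective values. Let $\hat x$ be a Slater point and set $\rho:=-\sup_{t\in T}(g_t(\hat x)-\bar b_t)$, the Slater gap, which is positive by compactness of $T$. For $(c,b)$ close to $(\bar c,\bar b)$, put $\lambda:=\|b-\bar b\|_\infty/\rho\in[0,1]$ and $z_\lambda:=(1-\lambda)\bx+\lambda\hat x$. Convexity of each $g_t$ gives $g_t(z_\lambda)\le\bar b_t-\lambda\rho\le b_t$, so $z_\lambda\in\mathcal{F}(b)$. Since $x\in\mathcal{S}(c,b)$, optimality yields $\psi(x)+\langle c,x\rangle\le\psi(z_\lambda)+\langle c,z_\lambda\rangle$; rewriting both sides in terms of the unperturbed objective and bounding $\psi(z_\lambda)+\langle\bar c,z_\lambda\rangle-(\psi(\bx)+\langle\bar c,\bx\rangle)$ by $\lambda K$ via convexity of $\psi$, where $K:=\psi(\hat x)+\langle\bar c,\hat x\rangle-(\psi(\bx)+\langle\bar c,\bx\rangle)\ge0$, I obtain
\[
\psi(x)-\psi(\bx)+\langle\bar c,x-\bx\rangle\le\lambda K+\|c-\bar c\|\,\|z_\lambda-x\|.
\]
Since $\lambda K\le(K/\rho)\|(c,b)-(\bar c,\bar b)\|$ and $\|z_\lambda-x\|$ stays bounded for $x\in B_\delta(\bx)$ and small $\lambda$, the right-hand side is at most $M\|(c,b)-(\bar c,\bar b)\|$ for a suitable constant $M$, which is \eqref{P4.6-1}.

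\emph{Part (ii).} For (a), the definition \eqref{barf} gives $g_t(x^n)-\bar b_t\le f(x^n)$ for all $t$, so the choice $b^n_t:=\max\{\bar b_t,g_t(x^n)\}$ makes $x^n\in\mathcal{F}(b^n)$ with $\|b^n-\bar b\|_\infty=\sup_t[g_t(x^n)-\bar b_t]_+\le f(x^n)$ (hence $N=1$) and records in $T_{b^n}(x^n)=\{t\mid g_t(x^n)\ge\bar b_t\}$ the constraints violated-or-active at $x^n$; continuity of $t\mapsto g_t(x^n)$ ensures $b^n\in C(T,\R)$. For (b), the key point is that $\bx$ minimises the convex function $f$ (as $f\ge0=f(\bx)$), so $0\in\partial f(\bx)$. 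Unfolding this via the subdifferential rule for the maximum of $f_0:=\psi(\cdot)-\psi(\bx)+\langle\bar c,\cdot-\bx\rangle$ and $F:=\sup_t(g_t(\cdot)-\bar b_t)$, together with the Valadier formula for the subdifferential of a supremum, yields $0=\lambda_0(\bar c+u)+\sum_t\lambda_t u_t$ with $u\in\partial\psi(\bx)$, $u_t\in\partial g_t(\bx)$, $\lambda_i\ge0$ summing to $1$, and every active $t$ in $T_{\bar b}(\bx)$. Carath\'eodory's theorem in $\R^n$ truncates the sum to a finite index set, and the Slater condition is exactly the constraint qualification forcing $\lambda_0>0$; setting $\gamma_t:=\lambda_t/\lambda_0$ then gives $-(\bar c+u)\in\sum_{t\in T_0}\gamma_t u_t$ with $\gamma_t>0$ and $T_0$ finite.

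The main obstacle is the coupling $T_0\subseteq\bigcap_n T_{b^n}(x^n)$ demanded in (ii)(b): the decomposition of $0\in\partial f(\bx)$ only controls indices active at the limit $\bx$, whereas a constraint active at $\bx$ may be strictly satisfied at some $x^n$ and hence absent from $T_{b^n}(x^n)$. To close this gap I would not decompose at $\bx$ directly but at the points $x^n$: take subgradients $\xi^n\in\partial f(x^n)$ expressed over the constraints active at $x^n$ (a subset of $T_{b^n}(x^n)$), keep at most $n+1$ terms by Carath\'eodory, and pass to the limit using compactness of $T$ and outer semicontinuity of the subdifferential mappings to extract a \emph{single} finite $T_0$ whose indices are active along the whole sequence. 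Arranging that this limiting extraction produces the same $T_0$ for every $n$, while retaining the strict positivity $\gamma_t>0$ and of the objective multiplier, is the delicate step on which the whole of (ii)(b) rests.
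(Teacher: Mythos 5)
Your part (i) is correct and is the standard Slater--competitor argument: it is essentially the argument behind the result the paper itself relies on (note the paper gives no proof of Proposition~\ref{P4.6}; it extracts both assertions from \cite[Proposition~4.5 \& Theorem~4.7]{KruLopYanZhu19} and their proofs). Two caveats, though. First, the positivity of your gap $\rho=-\sup_{t\in T}(g_t(\hat x)-\bar b_t)$ requires $t\mapsto g_t(\hat x)$ to be upper semicontinuous (or the supremum to be attained); the standing assumption is only that $(t,x)\mapsto g_t(x)$ is \emph{lower} semicontinuous, and the supremum over compact $T$ of a strictly negative lsc function can equal $0$. Second, and for the same reason, your construction in (ii)(a) is flawed: $b^n_t:=\max\{\bar b_t,g_t(x^n)\}$ is only lsc in $t$, so $b^n$ need not belong to the parameter space $C(T,\R)$. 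This is easily repaired by the constant shift $b^n:=\bar b+f(x^n)\mathbf{1}$, which is continuous, keeps $x^n\in\mathcal{F}(b^n)$ by \eqref{barf}, and again gives $N=1$ --- but note that this changes $T_{b^n}(x^n)$, which matters below.

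The genuine gap is (ii)(b), and you concede it yourself: the requirement $T_0\subset\bigcap_{n\in\N}T_{b^n}(x^n)$ is precisely what your sketch does not deliver, and the repair you outline would fail. Outer semicontinuity of the subdifferentials and compactness of $T$ only yield indices and multipliers active \emph{at the limit} $\bx$; a limit $t_0$ of indices $t_n\in T_{b^n}(x^n)$ need not belong to any single $T_{b^m}(x^m)$, let alone all of them. Nor can you force activity afterwards: to make $t_0$ active at $x^n$ you must set $b^n_{t_0}=g_{t_0}(x^n)$ while preserving the bound $\|b^n-\bar b\|_\infty\le Nf(x^n)$ from (a), i.e. you need $|g_{t_0}(x^n)-\bar b_{t_0}|\le Nf(x^n)$; this does not follow from $g_{t_0}(\bx)=\bar b_{t_0}$ and $x^n\to\bx$, since $f(x^n)$ may vanish much faster than $\|x^n-\bx\|$ (local Lipschitzness of $g_{t_0}$ gives only $O(\|x^n-\bx\|)$). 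The structural point you miss is that (a) and (b) share the \emph{same} sequence $\{b^n\}$: the freedom in choosing $b^n$ (lowering $b^n_t$ below $\bar b_t$ at suitable indices, within the $Nf(x^n)$ budget, glued continuously on $T$) is exactly what must be spent to manufacture a common finite active set compatible with a Karush--Kuhn--Tucker decomposition whose objective multiplier the Slater condition keeps positive; your canonical choice in (a) forfeits that freedom, so (b) cannot then be recovered by limiting arguments alone. The two assertions have to be proved simultaneously, as in the proofs in \cite{KruLopYanZhu19} from which the paper extracts the statement; as written, your argument establishes (i) and (a fixable form of) (ii)(a) but not (ii)(b).
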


The next proposition, {a nonlinear counterpart of \cite[Propositions~4.4]{KruLopYanZhu19}}, gives {a} sufficient condition
for
the calmness of the level set mapping $\mathcal{L}$.

\begin{proposition}\label{P3.2}
Let $P(\bar{c},\bar{b})$ satisfy the Slater condition and  $\varphi\in\mathcal{C}^1$ satisfy the following condition: 
\begin{align}\label{P3.2-0}
\forall N>0\quad\exists \ga>0\qdtx{such that} \frac{\varphi(Nt)}{t}\le\ga\varphi'(t)\qdtx{for all}t>0.
\end{align}
If $\mathcal{L}$ is not $(\al\varphi)-$calm at $((\psi(\bar
{x})+\langle\bar{c},\bar{x}\rangle,\bar{b}),\bar{x})$ for all $\al>0$, then there exist sequences $x^{n}\to\bar{x}$ and $\{b^{n}\}_{n\in\mathbb{N}}\subset C(T,\R)$
such that $x^{n}\in\mathcal{F}(b^{n})$
$(n\in\N)$, ${f}(x^{n})\downarrow0$ and
\begin{align}\label{P3.2-1}
\lim_{n\rightarrow+\infty} \frac{\varphi(\|b^{n}-\bar{b}\|_{\infty})} {d(x^{n},\mathcal{S}_{\bar{c}}(\bar{b}))}=0.
\end{align}
\end{proposition}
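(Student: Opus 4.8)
The plan is to reduce the calmness of $\mathcal{L}$ to an error bound for the convex function $f$ via Proposition~\ref{P6.9}, translate the failure of calmness into the vanishing of a subdifferential error bound modulus, and only then construct the sequence $\{b^n\}$ using Proposition~\ref{P4.6}. The interaction between condition \eqref{P3.2-0} and the convexity of $f$ is what makes the final estimate go through.

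First I would exploit Proposition~\ref{P6.9}: $\mathcal{L}$ being $(\al\varphi)-$calm at the point in question is equivalent to $f$ admitting an $(\al\varphi)-$error bound at $\bx$. Hence the hypothesis that $\mathcal{L}$ is not $(\al\varphi)-$calm for any $\al>0$ means precisely that $f$ admits no $(\al\varphi)-$error bound for any $\al>0$, which by the reciprocity observation following \eqref{D0-3} is equivalent to $\Er_\varphi f(\bx)=0$. Next, note that \eqref{P3.2-0} with $N=1$ is exactly condition \eqref{C2.6-2}; therefore Corollary~\ref{C2.6--}(ii) applies and gives $\overline{|\sd f|}{}_\varphi^>(\bx)\le\Er_\varphi f(\bx)=0$, that is, $\liminf_{x\to\bx,\,f(x)\downarrow0}\varphi'(f(x))|\sd f|(x)=0$. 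From this I extract a sequence $x^n\to\bx$ with $f(x^n)\downarrow0$ (note $[f\le0]=\mathcal{S}_{\bar c}(\bar b)\ne\es$ since $\bx\in[f\le0]$, and $f(x^n)>0$) along which $\varphi'(f(x^n))|\sd f|(x^n)\to0$.

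With this sequence in hand I would invoke Proposition~\ref{P4.6}(ii)(a), whose hypotheses are met because $P(\bar c,\bar b)$ satisfies the Slater condition and $x^n\to\bx$ with $f(x^n)\downarrow0$; this furnishes $\{b^n\}\subset C(T,\R)$ with $x^n\in\mathcal{F}(b^n)$ and $\|b^n-\bar b\|_\infty\le Nf(x^n)$ for a single constant $N>0$. It remains to verify \eqref{P3.2-1}. Since $\varphi$ is nondecreasing, $\varphi(\|b^n-\bar b\|_\infty)\le\varphi(Nf(x^n))$, and \eqref{P3.2-0} with this $N$ yields $\ga>0$ with $\varphi(Nf(x^n))\le\ga f(x^n)\varphi'(f(x^n))$. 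Moreover, applying Proposition~\ref{T2.1}(iii) at $x^n$ with $\tau:=f(x^n)/d(x^n,[f\le0])$ (for which \eqref{D0-1} holds with equality) gives $|\sd f|(x^n)\ge f(x^n)/d(x^n,[f\le0])$. Chaining these bounds and using $d(x^n,\mathcal{S}_{\bar c}(\bar b))=d(x^n,[f\le0])$, I obtain
\[
\frac{\varphi(\|b^n-\bar b\|_\infty)}{d(x^n,\mathcal{S}_{\bar c}(\bar b))}\le\ga\,\varphi'(f(x^n))\,\frac{f(x^n)}{d(x^n,[f\le0])}\le\ga\,\varphi'(f(x^n))|\sd f|(x^n)\to0,
\]
which, together with $x^n\to\bx$, $f(x^n)\downarrow0$ and $x^n\in\mathcal{F}(b^n)$, is exactly the asserted conclusion.

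The main obstacle — and the very reason condition \eqref{P3.2-0} is imposed — is that \eqref{P3.2-0} is \emph{not} a doubling condition on $\varphi$, so the naive bound $\varphi(Nf(x^n))\le C\varphi(f(x^n))$ is unavailable and one cannot simply transport the vanishing of $\varphi(f(x^n))/d(x^n,[f\le0])$ across the factor $N$. The resolution is to switch from the value formulation of the modulus ($\Er_\varphi f$) to its slope formulation ($\overline{|\sd f|}{}_\varphi^>$): the spurious factor $f(x^n)/d(x^n,[f\le0])$ produced by \eqref{P3.2-0} is precisely what the convex inequality of Proposition~\ref{T2.1}(iii) absorbs into $|\sd f|(x^n)$, after which the chosen vanishing of $\varphi'(f(x^n))|\sd f|(x^n)$ finishes the estimate. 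Minor care is needed to ensure $d(x^n,[f\le0])\in]0,+\infty[$ and $|\sd f|(x^n)>0$, all of which follow from $f(x^n)>0$, the convexity of $f$, and $\bx\in[f\le0]$.
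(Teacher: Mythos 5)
Your proposal is correct and follows essentially the same route as the paper's own proof: failure of calmness is translated via Proposition~\ref{P6.9} into $\Er_\varphi f(\bx)=0$, Corollary~\ref{C2.6--}(ii) (enabled by \eqref{P3.2-0} with $N=1$) yields a sequence with $\varphi'(f(x^n))|\sd f|(x^n)\to0$, Proposition~\ref{P4.6}(ii)(a) supplies $\{b^n\}$, and the convexity bound $|\sd f|(x^n)\ge f(x^n)/d(x^n,[f\le0])$ combined with \eqref{P3.2-0} closes the estimate. The only cosmetic difference is that you obtain that convexity bound from Proposition~\ref{T2.1}(iii) while the paper invokes Lemma~\ref{P1.1}(iii) directly; these are the same fact.
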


\begin{proof}
Suppose $\mathcal{L}$ is not $(\al\varphi)-$calm at $((\psi(\bar
{x})+\langle\bar{c},\bar{x}\rangle,\bar{b}),\bar{x})$ for all $\al>0$.
By Corollary~\ref{C2.6--}(ii), 
there exists a sequence $x^{n}\to\bar{x}$ with ${f}(x^{n})\downarrow0$ such that
\begin{gather*}
\lim\limits_{n\rightarrow+\infty}\varphi'({f}(x^n))|\sd f|(x^n)=0.
\end{gather*}
By Lemma~\ref{P1.1}(iii),
\begin{align*}
|\sd f|(x^n)=|\nabla f|^\diamond(x^n)
\ge\sup_{u\in\mathcal{S}(\bar c,\bar b)}\frac{f(x^n)}{\|x^n-u\|} =\frac{f(x^n)}{d(x^{n},\mathcal{S}(\bar{c},\bar{b}))}, \quad n\in\N.
\end{align*}
By Proposition~\ref{P4.6}(ii),
there exist a sequence $\{b^{n}\}_{n\in\mathbb{N}}\subset C(T,\R)$ and a number ${N\in\N}$
such that $x^{n}\in\mathcal{F}(b^{n})$ and
$\|b^n-\bar b\|_{\infty}\le Nf(x^n)$
for all $n\in\N$.
Then, with $\ga>0$ corresponding to $N$ in view of condition \eqref{P3.2-0}, we have
\begin{align*}
0&\le\lim_{n\rightarrow+\infty}\dfrac{\varphi(\| b^{n}-\bar{b}\|_{\infty})}
{d(x^{n},\mathcal{S}(\bar{c},\bar{b}))}
\le\lim_{n\rightarrow+\infty}|\sd f|(x^n)\frac{\varphi(\|b^{n}-\bar{b}\|_{\infty})}{{f}(x^{n})}\\
&\le\lim_{n\rightarrow+\infty}|\sd f|(x^n)\frac{\varphi(N{f}(x^{n}))}{{f}(x^{n})} \le\ga\lim_{n\rightarrow+\infty}|\sd f|(x^n)\varphi'({f}(x^{n}))=0.
\end{align*}
This completes the proof.
\end{proof}

\begin{remark}
\begin{enumerate}
\item
Condition \eqref{P3.2-0} implies condition \eqref{C2.6-2}.
It is satisfied, e.g., in the H\"older case, i.e. when
$\varphi(t)=\tau\iv t^q$ for some
$\tau>0$ and $q\in]0,1]$, or more generally, when
$\varphi(t)=\tau\iv(t^q+\be t)$ for some $\tau>0$, $\be>0$ and $q\in]0,1]$.
\item
Condition \eqref{P3.2-0} can be replaced by the following weaker condition: for any $N>0$, $\limsup_{t\downarrow0}\frac{\varphi(Nt)}{t\varphi'(t)}<{+\infty}$.
\end{enumerate}
\end{remark}

\begin{proposition}
Let $P(\bar{c},\bar{b})$ satisfy the Slater condition,
and $\varphi\in\mathcal{C}$.
If $f$ admits a $\varphi-$error bound at $\bx$ with some
$\de\in]0,+\infty]$ and
$\mu\in]0,+\infty]$, then there exists an $\al>0$ such that
$\mathcal{S}$ is $\phi_\al-$calm at $((\bar c,\bar b),\bx)$ with some $\de'\in]0,\de[$ and $\mu'\in]0,\mu[$, where
$\phi_\al(t):=\varphi(\al t)$
$(t\ge0)$.
\end{proposition}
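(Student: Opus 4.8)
The plan is to read off $\phi_\al$-calmness of $\mathcal{S}$ directly from the $\varphi$-error bound for $f$, the crucial point being a uniform estimate $f(x)\le\beta\,\|(c,b)-(\bar c,\bar b)\|$ on the exact solutions $x$ of the perturbed problems. First I would fix the error-bound radii $\de,\mu$ for which $d(x,[f\le0])\le\varphi(f(x))$ holds for all $x\in B_\de(\bx)\cap[0<f<\mu]$, and recall from \eqref{marco6} that $[f\le0]=\mathcal{S}(\bar c,\bar b)$. Invoking the Slater condition through Proposition~\ref{P4.6}(i) produces $\de_0,\mu_0,M>0$ with
$$\psi(x)-\psi(\bx)+\langle\bar c,x-\bx\rangle\le M\|(c,b)-(\bar c,\bar b)\|$$
for all $(c,b)\in B_{\mu_0}(\bar c,\bar b)$ and $x\in\mathcal{S}(c,b)\cap B_{\de_0}(\bx)$; this controls the first term in the max defining $f$ in \eqref{barf}.

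Next I would control the second term. Since every $x\in\mathcal{S}(c,b)$ is feasible for $P(c,b)$, we have $g_t(x)\le b_t$ for all $t\in T$, so that
$$\sup_{t\in T}(g_t(x)-\bar b_t)\le\sup_{t\in T}(b_t-\bar b_t)\le\|b-\bar b\|_\infty\le\|(c,b)-(\bar c,\bar b)\|.$$
Combining this with the objective estimate and the definition \eqref{barf} of $f$ yields $f(x)\le\beta\,\|(c,b)-(\bar c,\bar b)\|$ with $\beta:=\max\{M,1\}$, valid for all $(c,b)\in B_{\mu_0}(\bar c,\bar b)$ and $x\in\mathcal{S}(c,b)\cap B_{\de_0}(\bx)$. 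This single inequality is the heart of the argument and is exactly where the Slater hypothesis is used.

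Then I would assemble the constants and apply the error bound. Set $\de'\in]0,\de[$ with $\de'\le\de_0$, and choose $\mu'\in]0,\mu[$ with $\mu'\le\mu_0$ and $\beta\mu'\le\mu$. Take $(c,b)$ with $t:=\|(c,b)-(\bar c,\bar b)\|<\mu'$ and $x\in\mathcal{S}(c,b)\cap B_{\de'}(\bx)$. If $f(x)\le0$ then $x\in[f\le0]=\mathcal{S}(\bar c,\bar b)$ and $d(x,\mathcal{S}(\bar c,\bar b))=0$, so the estimate is trivial. Otherwise $0<f(x)\le\beta t<\mu$, hence $x\in B_\de(\bx)\cap[0<f<\mu]$ and the $\varphi$-error bound gives $d(x,\mathcal{S}(\bar c,\bar b))=d(x,[f\le0])\le\varphi(f(x))$. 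Taking $\al:=\beta$ and using monotonicity of $\varphi$ (as used, e.g., in the proof of Proposition~\ref{P5.1}) together with $f(x)\le\al t$ yields $d(x,\mathcal{S}(\bar c,\bar b))\le\varphi(\al t)=\phi_\al(t)$, which is precisely $\phi_\al$-calmness of $\mathcal{S}$ at $((\bar c,\bar b),\bx)$ with $\de'$ and $\mu'$.

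The step I expect to require the most care is not any single cited result but their correct amalgamation: the estimate $f(x)\le\beta t$ must hold simultaneously with the error-bound inequality, so one must intersect the balls coming from Proposition~\ref{P4.6}(i) with those coming from the error bound and rescale $\mu'$ by $\beta$ so that $f(x)<\mu$ is guaranteed and the conclusion lands in the prescribed ranges $\de'\in]0,\de[$, $\mu'\in]0,\mu[$. A subtler point is the passage $\varphi(f(x))\le\varphi(\al t)$ in the final line: it relies on $\varphi$ being nondecreasing, which is the convention in force throughout the paper (and is automatic for $\varphi\in\mathcal{C}^1$ and in the H\"older case), and this is the only place where that monotonicity is needed.
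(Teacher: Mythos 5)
Your proposal is correct and follows essentially the same route as the paper's proof: invoke Proposition~\ref{P4.6}(i) under the Slater condition to bound the objective term of $f$ in \eqref{barf}, use feasibility $g_t(x)\le b_t$ to bound the constraint term, combine to get $f(x)\le\beta\|(c,b)-(\bar c,\bar b)\|$, and then apply the $\varphi$-error bound together with the (implicitly assumed) monotonicity of $\varphi$. If anything, your write-up is slightly more careful than the paper's: you explicitly shrink $\mu'$ so that $\beta\mu'\le\mu$ guarantees $f(x)<\mu$ before applying \eqref{D1-1}, and your constant $\beta=\max\{M,1\}$ correctly dominates the term $\|b-\bar b\|_\infty\le\|(c,b)-(\bar c,\bar b)\|$, whereas the paper's $\al=\max\{M,\mu'\}$ only works when $\max\{M,\mu'\}\ge1$.
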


\begin{proof}
Let ${f}$ have a $\varphi-$error bound at $\bar{x}$ with some $\delta\in]0,+\infty]$ and $\mu\in]0,+\infty]$.
By Proposition~\ref{P4.6}(i), there exist $\de'\in]0,\de[$, $\mu'\in]0,\mu[$ and $M>0$ such that inequality \eqref{P4.6-1} holds for all $(c,b)\in B_{\mu'}(\bar c,\bar b)$ and
$x\in\mathcal{S}(c,b)\cap B_{\de'}(\bx)
$.
It follows from \eqref{D1-1}, \eqref{marco6} and \eqref{P4.6-1} that,
for all $(c,b)\in B_{\mu'}(\bar c,\bar b)$ and $x\in\mathcal{S}(c,b)\cap B_{\de'}(\bx)\cap[f>0]$ (hence, $x\in\mathcal{F}(b)$),
\sloppy
\begin{align*}\notag
d(x,\mathcal{S}(\bar{c},\bar{b}))=
&d(x,[{f}\leq 0])
\leq {\varphi(f(x))}
\\\notag
\le&
\varphi(\max\{[\psi(x)-\psi(\bar{x})+\langle \bar{c},x-\bar{x}\rangle]_+,
\;\sup_{t\in T}[b_{t}-\bar{b}_{t}]_+\})
\\
\le&\varphi(\max\{M,\mu'\}\|(c-\bar c,b-\bar b)\|)
=\varphi(\al\|(c-\bar c,b-\bar b)\|),
\end{align*}
where $\al:=\max\{M,\mu'\}$.
Hence, $\mathcal{S}$ is $\phi_\al-$calm at $((\bar{c},\bar{b}),\bar{x})$ with
$\de'$ and $\mu'$.
\end{proof}

\begin{proposition}
Let $\psi$ and $g_{t}$ $(t\in T)$ be linear, $P(\bar{c},\bar{b})$ satisfy the Slater condition, and $\varphi\in\mathcal{C}^1$ satisfy condition \eqref{P3.2-0}.
If $\mathcal{S}_{\bar{c}}$ is $\varphi-$calm at $(\bar{b},\bar{x})$, then there exists an $\al>0$ such that
$\mathcal{L}$ is $(\al\varphi)-$calm at $((\psi(\bar
{x})+\langle\bar{c},\bar{x}\rangle,\bar{b}),\bar{x})$.
\end{proposition}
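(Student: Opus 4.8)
The plan is to argue by contradiction, playing the calmness of $\mathcal{S}_{\bar c}$ against the failure of calmness of $\mathcal{L}$ through Proposition~\ref{P3.2}. So I would assume that $\mathcal{S}_{\bar c}$ is $\varphi$-calm at $(\bar b,\bx)$ with some $\de,\mu\in]0,+\infty]$ and, towards a contradiction, that $\mathcal{L}$ fails to be $(\al\varphi)$-calm at $((\psi(\bx)+\langle\bar c,\bx\rangle,\bar b),\bx)$ for every $\al>0$. Under the latter assumption, Proposition~\ref{P3.2} produces sequences $x^n\to\bx$ with $f(x^n)\downarrow0$ and $\{b^n\}\subset C(T,\R)$ such that $x^n\in\mathcal{F}(b^n)$ and $\frac{\varphi(\|b^n-\bar b\|_\infty)}{d(x^n,\mathcal{S}_{\bar c}(\bar b))}\to0$ as in \eqref{P3.2-1}. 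My target is to convert each feasible point $x^n$ into a genuine minimizer of $P(\bar c,b^n)$, so that the calmness of $\mathcal{S}_{\bar c}$ applies to it and collides with \eqref{P3.2-1}.

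First I would note that the $b^n$ furnished by Proposition~\ref{P3.2} is the one built in its proof via Proposition~\ref{P4.6}(ii); hence it may be taken to carry simultaneously the bound $\|b^n-\bar b\|_\infty\le Nf(x^n)$ from part~(ii)(a) and the stationarity data from part~(ii)(b): a finite $T_0\subset\bigcap_n T_{b^n}(x^n)$ together with $\gamma_t>0$, $u_t\in\partial g_t(\bx)$ $(t\in T_0)$ and $u\in\partial\psi(\bx)$ satisfying $-(\bar c+u)\in\sum_{t\in T_0}\gamma_t u_t$. The decisive step --- and the one where linearity enters --- is then to observe that, since $\psi$ and the $g_t$ are linear, their subdifferentials are constant singletons, so $\partial\psi(x^n)=\{u\}$ and $\partial g_t(x^n)=\{u_t\}$ for every $n$ and $t$. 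Thus $-(\bar c+u)\in\sum_{t\in T_0}\gamma_t u_t$ reads exactly as the stationarity condition $0\in\partial(\psi+\langle\bar c,\cdot\rangle)(x^n)+\sum_{t\in T_0}\gamma_t\partial g_t(x^n)$, while $T_0\subset T_{b^n}(x^n)$ supplies complementary slackness. Together with the feasibility $x^n\in\mathcal{F}(b^n)$, these are the KKT conditions for the convex problem $P(\bar c,b^n)$, which by their sufficiency in the convex setting force $x^n\in\mathcal{S}(\bar c,b^n)=\mathcal{S}_{\bar c}(b^n)$.

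Finally I would close the contradiction. Since $x^n\to\bx$ and $\|b^n-\bar b\|_\infty\le Nf(x^n)\to0$, for all large $n$ one has $x^n\in\mathcal{S}_{\bar c}(b^n)\cap B_\de(\bx)$ and $\|b^n-\bar b\|_\infty<\mu$, so the assumed $\varphi$-calmness of $\mathcal{S}_{\bar c}$ yields $d(x^n,\mathcal{S}_{\bar c}(\bar b))\le\varphi(\|b^n-\bar b\|_\infty)$; equivalently $\frac{\varphi(\|b^n-\bar b\|_\infty)}{d(x^n,\mathcal{S}_{\bar c}(\bar b))}\ge1$ for all large $n$, the denominator being positive because $f(x^n)>0$ keeps $x^n$ out of $[f\le0]=\mathcal{S}_{\bar c}(\bar b)$ (see \eqref{marco6}). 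This contradicts \eqref{P3.2-1}, so $\mathcal{L}$ must be $(\al\varphi)$-calm for some $\al>0$. I expect the main obstacle to be precisely the middle step: certifying that the feasible points $x^n$ are in fact optimal for $P(\bar c,b^n)$. This rests on using the very same $b^n$ both in \eqref{P3.2-1} and in the KKT relation of Proposition~\ref{P4.6}(ii)(b), and on the constancy of the subdifferentials granted by linearity, without which feasibility alone would not yield optimality.
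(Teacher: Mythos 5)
Your proposal is correct and follows essentially the same route as the paper: contradiction via Proposition~\ref{P3.2}, upgrading the feasible points $x^n$ to optimal solutions of $P(\bar c,b^n)$ using the stationarity data of Proposition~\ref{P4.6}(ii)(b) together with linearity, and then colliding the resulting calmness inequality for $\mathcal{S}_{\bar c}$ with \eqref{P3.2-1}. The only differences are that you spell out the KKT-sufficiency step that the paper dismisses as ``readily follows,'' and you correctly omit the paper's auxiliary remark that $P(\bar c,b^n)$ satisfies the Slater condition for large $n$, which is indeed unnecessary since sufficiency of KKT conditions in the convex setting requires no constraint qualification.
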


\begin{proof}
Suppose $\mathcal{L}$ is not $(\al\varphi)-$calm at $((\psi(\bar{x})+\langle\bar{c},\bar{x}\rangle,\bar{b}),\bar{x})$
for all $\al>0$.
By Proposition~\ref{P4.6}(ii) and Proposition~\ref{P3.2}, there exist sequences $x^{n}\to\bar{x}$ and $b^{n}\to\bar{b}$ such that
$x^{n}\in\mathcal{F}(b^{n})$ $(n\in\N)$, and conditions \eqref{P3.2-1} and (b) in Proposition~\ref{P4.6}(ii) are satisfied.
By continuity, we can assume that $P(\bar{c},b^{n})$ satisfies the Slater condition for all sufficiently large $n$.
It readily follows from condition (b) in Proposition~\ref{P4.6}(ii) in the linear setting that $x^{n}\in\mathcal{S}_{\bar{c}}(b^{n})$ for all sufficiently large $n$.
Thus, $\mathcal{S}_{\bar{c}}$ is not $\varphi-$calm at $(\bar{b},\bar{x})$.
\end{proof}

\section*{Acknowledgements}

We would like to thank the referees for their constructive comments.
We are also very grateful to the following colleagues, who provided feedback on the preprint of the paper and made many helpful suggestions:
Jean-No\"el Corvellec,
Joydeep Dutta,
Marco L\'opez,
Ngai Huynh Van,
Michel Th\'era and
Jane Ye.

\section*{Disclosure statement}

No potential conflict of interest was reported by the authors.

\section*{Funding}

The research was supported by the Australian Research Council, project DP160100854.
The second author benefited from the support of the European
Union's Horizon 2020 research and innovation programme under the Marie Sk{\l}odowska--Curie Grant
Agreement No. 823731 CONMECH, and Conicyt REDES program 180032.

\addcontentsline{toc}{section}{References}
\bibliography{EB,BUCH-kr,Kruger,KR-tmp}

\def\cprime{$'$} \def\cftil#1{\ifmmode\setbox7\hbox{$\accent"5E#1$}\else
  \setbox7\hbox{\accent"5E#1}\penalty 10000\relax\fi\raise 1\ht7
  \hbox{\lower1.15ex\hbox to 1\wd7{\hss\accent"7E\hss}}\penalty 10000
  \hskip-1\wd7\penalty 10000\box7} \def\cprime{$'$} \def\cprime{$'$}
  \def\cprime{$'$} \def\cprime{$'$} \def\cprime{$'$}
  \def\Dbar{\leavevmode\lower.6ex\hbox to 0pt{\hskip-.23ex \accent"16\hss}D}
  \def\cfac#1{\ifmmode\setbox7\hbox{$\accent"5E#1$}\else
  \setbox7\hbox{\accent"5E#1}\penalty 10000\relax\fi\raise 1\ht7
  \hbox{\lower1.15ex\hbox to 1\wd7{\hss\accent"13\hss}}\penalty 10000
  \hskip-1\wd7\penalty 10000\box7} \def\cprime{$'$}
\begin{thebibliography}{10}
\providecommand{\url}[1]{\normalfont{#1}}
\providecommand{\urlprefix}{Available from: }

\bibitem{Pan97}
Pang~JS. Error bounds in mathematical programming. Math Programming, Ser B.
  1997;\hspace{0pt}79(1-3):299--332.

\bibitem{YeYe97}
Ye~JJ, Ye~XY. Necessary optimality conditions for optimization problems with
  variational inequality constraints. Math Oper Res.
  1997;\hspace{0pt}22(4):977--997.

\bibitem{AzeCor04}
Az{\'e}~D, Corvellec~JN. Characterizations of error bounds for lower
  semicontinuous functions on metric spaces. ESAIM: Control Optim Calc Var.
  2004;\hspace{0pt}10(3):409--425.

\bibitem{Kru15}
Kruger~AY. Error bounds and metric subregularity. Optimization.
  2015;\hspace{0pt}64(1):49--79.
  \urlprefix\url{https://doi.org/10.1080/02331934.2014.938074}.

\bibitem{Kru16}
Kruger~AY. Nonlinear metric subregularity. J Optim Theory Appl.
  2016;\hspace{0pt}171(3):820--855.
  \urlprefix\url{https://doi.org/10.1007/s10957-015-0807-8}.

\bibitem{YaoZhe16}
Yao~JC, Zheng~XY. Error bound and well-posedness with respect to an admissible
  function. Appl Anal. 2016;\hspace{0pt}95(5):1070--1087.
  \urlprefix\url{https://10.1080/00036811.2015.1051474}.

\bibitem{AzeCor17}
Az\'{e}~D, Corvellec~JN. Nonlinear error bounds via a change of function. J
  Optim Theory Appl. 2017;\hspace{0pt}172(1):9--32.
  \urlprefix\url{https://doi.org/10.1007/s10957-016-1001-3}.

\bibitem{Iof17}
Ioffe~AD. Variational analysis of regular mappings. {T}heory and applications.
  Springer; 2017. Springer Monographs in Mathematics.

\bibitem{KruLopYanZhu19}
Kruger~AY, L{\'o}pez~MA, Yang~X, et~al. H{\"o}lder error bounds and
  {H}{\"o}lder calmness with applications to convex semi-infinite optimization.
  Set-Valued Var Anal. 2019;\hspace{0pt}27(4):995--1023.
  \urlprefix\url{https://doi.org/10.1007/s11228-019-0504-0}.

\bibitem{Jou00}
Jourani~A. Hoffman's error bound, local controllability, and sensitivity
  analysis. SIAM J Control Optim. 2000;\hspace{0pt}38(3):947--970.

\bibitem{NgZhe01}
Ng~KF, Zheng~XY. Error bounds for lower semicontinuous functions in normed
  spaces. SIAM J Optim. 2001;\hspace{0pt}12(1):1--17.

\bibitem{WuYe01}
Wu~Z, Ye~JJ. Sufficient conditions for error bounds. SIAM J Optim.
  2001/02;\hspace{0pt}12(2):421--435.

\bibitem{AzeCorLuc02}
Az{\'e}~D, Corvellec~JN, Lucchetti~RE. Variational pairs and applications to
  stability in nonsmooth analysis. Nonlinear Anal, Ser A: Theory Methods.
  2002;\hspace{0pt}49(5):643--670.
  \urlprefix\url{http://dx.doi.org/10.1016/S0362-546X(01)00129-8}.

\bibitem{WuYe02}
Wu~Z, Ye~JJ. On error bounds for lower semicontinuous functions. Math Program,
  Ser A. 2002;\hspace{0pt}92(2):301--314.

\bibitem{WuYe03}
Wu~Z, Ye~JJ. First-order and second-order conditions for error bounds. SIAM J
  Optim. 2003;\hspace{0pt}14(3):621--645.
  \urlprefix\url{http://dx.doi.org/10.1137/S1052623402412982}.

\bibitem{NgaThe04}
Ngai~HV, Th{\'e}ra~M. Error bounds and implicit multifunction theorem in smooth
  {B}anach spaces and applications to optimization. Set-Valued Anal.
  2004;\hspace{0pt}12(1-2):195--223.
  \urlprefix\url{http://dx.doi.org/10.1023/B:SVAN.0000023396.58424.98}.

\bibitem{FabHenKruOut10}
Fabian~MJ, Henrion~R, Kruger~AY, et~al. Error bounds: necessary and sufficient
  conditions. Set-Valued Var Anal. 2010;\hspace{0pt}18(2):121--149.
  \urlprefix\url{http://dx.doi.org/10.1007/s11228-010-0133-0}.

\bibitem{FabHenKruOut12}
Fabian~MJ, Henrion~R, Kruger~AY, et~al. About error bounds in metric spaces.
  In: Klatte~D, L\"uthi~HJ, Schmedders~K, editors. Operations research
  proceedings 2011. selected papers of the int. conf. operations research (or
  2011), august 30 -- september 2, 2011, zurich, switzerland. Berlin:
  Springer-Verlag; 2012. p. 33--38.

\bibitem{MenYan12}
Meng~KW, Yang~XQ. Equivalent conditions for local error bounds. Set-Valued Var
  Anal. 2012;\hspace{0pt}20(4):617--636.
  \urlprefix\url{http://dx.doi.org/s11228-012-0217-0}.

\bibitem{ChuJey16.1}
Chuong~TD, Jeyakumar~V. Characterizing robust local error bounds for linear
  inequality systems under data uncertainty. Linear Algebra Appl.
  2016;\hspace{0pt}489:199--216.
  \urlprefix\url{https://doi.org/10.1016/j.laa.2015.10.011}.

\bibitem{ChuJey16.2}
Chuong~TD, Jeyakumar~V. Robust global error bounds for uncertain linear
  inequality systems with applications. Linear Algebra Appl.
  2016;\hspace{0pt}493:183--205.
  \urlprefix\url{https://doi.org/10.1016/j.laa.2015.11.035}.

\bibitem{LiMenYan18}
Li~MH, Meng~KW, Yang~XQ. On error bound moduli for locally {L}ipschitz and
  regular functions. Math Program, Ser A. 2018;\hspace{0pt}171(1-2):463--487.
  \urlprefix\url{https://doi.org/10.1007/s10107-017-1200-1}.

\bibitem{NgZhe00}
Ng~KF, Zheng~XY. Global error bounds with fractional exponents. Math Program,
  Ser B. 2000;\hspace{0pt}88(2):357--370.
  \urlprefix\url{https://doi.org/10.1007/s101070050021}.

\bibitem{CorMot08}
Corvellec~JN, Motreanu~VV. Nonlinear error bounds for lower semicontinuous
  functions on metric spaces. Math Program, Ser A.
  2008;\hspace{0pt}114(2):291--319.

\bibitem{NgaThe08}
Ngai~HV, Th{\'e}ra~M. Error bounds in metric spaces and application to the
  perturbation stability of metric regularity. SIAM J Optim.
  2008;\hspace{0pt}19(1):1--20.
  \urlprefix\url{http://dx.doi.org/10.1137/060675721}.

\bibitem{NgaThe09}
Ngai~HV, Th{\'e}ra~M. Error bounds for systems of lower semicontinuous
  functions in {A}splund spaces. Math Program, Ser B.
  2009;\hspace{0pt}116(1-2):397--427.

\bibitem{AzeCor14}
Az\'{e}~D, Corvellec~JN. Nonlinear local error bounds via a change of metric. J
  Fixed Point Theory Appl. 2014;\hspace{0pt}16(1-2):351--372.
  \urlprefix\url{https://doi.org/10.1007/s11784-015-0220-9}.

\bibitem{ChaChe14}
Chao~Mt, Cheng~Cz. Linear and nonlinear error bounds for lower semicontinuous
  functions. Optim Lett. 2014;\hspace{0pt}8(4):1301--1312.
  \urlprefix\url{http://dx.doi.org/10.1007/s11590-013-0648-z}.

\bibitem{Kru15.2}
Kruger~AY. Error bounds and {H}\"{o}lder metric subregularity. Set-Valued Var
  Anal. 2015;\hspace{0pt}23(4):705--736.
  \urlprefix\url{https://doi.org/10.1007/s11228-015-0330-y}.

\bibitem{LiMorNghPha18}
Li~G, Mordukhovich~BS, Nghia~TTA, et~al. Error bounds for parametric polynomial
  systems with applications to higher-order stability analysis and convergence
  rates. Math Program, Ser B. 2018;\hspace{0pt}168(1-2):313--346.
  \urlprefix\url{https://doi.org/10.1007/s10107-016-1014-6}.

\bibitem{ZhaZhe19}
Zhang~B, Zheng~XY. Well-posedness and generalized metric subregularity with
  respect to an admissible function. Sci China Math.
  2019;\hspace{0pt}62(4):809--822.
  \urlprefix\url{https://doi.org/10.1007/s11425-017-9204-5}.

\bibitem{DutMar}
Dutta~J, Mart\'inez-Legaz~JE. Error bounds for inequality systems defining
  convex sets. Math Program.
  2020;\hspace{0pt}\urlprefix\url{http://dx.doi.org/10.1007/s10107-020-01575-1}.

\bibitem{Iof00}
Ioffe~AD. Metric regularity and subdifferential calculus. Russian Math Surveys.
  2000;\hspace{0pt}55:501--558.
  \urlprefix\url{http://dx.doi.org/10.1070/rm2000v055n03ABEH000292}.

\bibitem{RocWet98}
Rockafellar~RT, Wets~RJB. Variational analysis. Berlin: Springer; 1998.

\bibitem{Mor06.1}
Mordukhovich~BS. Variational analysis and generalized differentiation. {I}:
  {B}asic theory. (Grundlehren der Mathematischen Wissenschaften [Fundamental
  Principles of Mathematical Sciences]; Vol. 330). Berlin: Springer; 2006.

\bibitem{DonRoc14}
Dontchev~AL, Rockafellar~RT. Implicit functions and solution mappings. a view
  from variational analysis. 2nd ed. New York: Springer; 2014. Springer Series
  in Operations Research and Financial Engineering;
  \urlprefix\url{http://dx.doi.org/10.1007/978-1-4939-1037-3}.

\bibitem{DegMarTos80}
De~Giorgi~E, Marino~A, Tosques~M. Evolution problems in metric spaces and
  steepest descent curves. Atti Accad Naz Lincei Rend Cl Sci Fis Mat Natur (8).
  1980;\hspace{0pt}68(3):180--187. In Italian. English translation: Ennio De
  Giorgi, Selected Papers, Springer, Berlin 2006, 527--533.

\bibitem{Sim91}
Simons~S. The least slope of a convex function and the maximal monotonicity of
  its subdifferential. J Optim Theory Appl. 1991;\hspace{0pt}71(1):127--136.
  \urlprefix\url{http://dx.doi.org/10.1007/BF00940043}.

\bibitem{Kru03}
Kruger~AY. On {F}r\'{e}chet subdifferentials. J Math Sci (NY).
  2003;\hspace{0pt}116(3):3325--3358.
  \urlprefix\url{https://doi.org/10.1023/A:1023673105317}.

\bibitem{Cla83}
Clarke~FH. Optimization and nonsmooth analysis. New York: John Wiley \& Sons
  Inc.; 1983.

\bibitem{Roc79}
Rockafellar~RT. Directionally {L}ipschitzian functions and subdifferential
  calculus. Proc London Math Soc (3). 1979;\hspace{0pt}39(2):331--355.

\bibitem{Fab89}
Fabian~M. Subdifferentiability and trustworthiness in the light of a new
  variational principle of {B}orwein and {P}reiss. Acta Univ Carolinae.
  1989;\hspace{0pt}30:51--56.

\bibitem{Phe93}
Phelps~RR. Convex functions, monotone operators and differentiability. 2nd ed.
  (Lecture Notes in Mathematics; Vol. 1364). Springer-Verlag, Berlin; 1993.

\bibitem{AzeCor02}
Az{\'e}~D, Corvellec~JN. On the sensitivity analysis of {H}offman constants for
  systems of linear inequalities. SIAM J Optim.
  2002;\hspace{0pt}12(4):913--927.

\bibitem{Aze06}
Az{\'e}~D. A unified theory for metric regularity of multifunctions. J Convex
  Anal. 2006;\hspace{0pt}13(2):225--252.

\bibitem{CuoKru20.2}
Cuong~ND, Kruger~AY. Dual sufficient characterizations of transversality
  properties. Positivity. 2020;\hspace{0pt}24(5):1313--1359.
  \urlprefix\url{http://dx.doi.org/10.1007/s11117-019-00734-9}.

\bibitem{CuoKru21.2}
Cuong~ND, Kruger~AY. Transversality properties: Primal sufficient conditions.
  Set-Valued Var Anal. 2021;\hspace{0pt}29(2):221--256.
  \urlprefix\url{http://dx.doi.org/10.1007/s11228-020-00545-1}.

\bibitem{ZheZhu16}
Zheng~XY, Zhu~J. Generalized metric subregularity and regularity with respect
  to an admissible function. SIAM J Optim. 2016;\hspace{0pt}26(1):535--563.
  \urlprefix\url{https://doi.org/10.1137/15M1016345}.

\bibitem{AttBolRedSou10}
Attouch~H, Bolte~J, Redont~P, et~al. Proximal alternating minimization and
  projection methods for nonconvex problems: an approach based on the
  {Kurdyka--\L ojasiewicz} inequality. Math Oper Res.
  2010;\hspace{0pt}35(2):438--457.
  \urlprefix\url{http://dx.doi.org/10.1287/moor.1100.0449}.

\bibitem{BolDanLeyMaz10}
Bolte~J, Daniilidis~A, Ley~O, et~al. Characterizations of {{\L}}ojasiewicz
  inequalities: subgradient flows, talweg, convexity. Trans Amer Math Soc.
  2010;\hspace{0pt}362(6):3319--3363.
  \urlprefix\url{https://doi.org/10.1090/S0002-9947-09-05048-X}.

\bibitem{LiMor12}
Li~G, Mordukhovich~BS. H\"older metric subregularity with applications to
  proximal point method. SIAM J Optim. 2012;\hspace{0pt}22(4):1655--1684.
  \urlprefix\url{http://dx.doi.org/10.1137/120864660}.

\bibitem{MorOuy15}
Mordukhovich~BS, Ouyang~W. Higher-order metric subregularity and its
  applications. J Global Optim. 2015;\hspace{0pt}63(4):777--795.
  \urlprefix\url{http://dx.doi.org/10.1007/s10898-015-0271-x}.

\bibitem{JouThi90}
Jourani~A, Thibault~L. The use of metric graphical regularity in approximate
  subdifferential calculus rules in finite dimensions. Optimization.
  1990;\hspace{0pt}21(4):509--519.
  \urlprefix\url{https://doi.org/10.1080/02331939008843574}.

\bibitem{CanKruLopParThe14}
C\'{a}novas~MJ, Kruger~AY, L\'{o}pez~MA, et~al. Calmness modulus of linear
  semi-infinite programs. SIAM J Optim. 2014;\hspace{0pt}24(1):29--48.
  \urlprefix\url{https://doi.org/10.1137/130907008}.

\bibitem{CanHanLopPar08}
C{\'a}novas~MJ, Hantoute~A, L{\'o}pez~MA, et~al. Stability of indices in the
  {KKT} conditions and metric regularity in convex semi-infinite optimization.
  J Optim Theory Appl. 2008;\hspace{0pt}139(3):485--500.
  \urlprefix\url{http://dx.doi.org/10.1007/s10957-008-9407-1}.

\end{thebibliography}
\bibliographystyle{tfnlm}
\end{document}